%%%%%%%%%%%%%%%%%%%%%%%%%%%%%%%%%%%%%%%%%%%%%%%%%%%%%%%%%%%%%%%%%%%

\documentclass{amsart}

%%%%%%%%%%%%%%%%%%%%%%%%%%%%%%%%%%%%%%%%%%%%%%%%%%%%%%%%%%%%%%%%%%
% Pacakges 

\usepackage[english]{babel}

\usepackage{mathrsfs, mathtools, amssymb}

\usepackage{dsfont}

\usepackage{tikz, tikz-cd}
% \textwidth=125mm
% \textheight=185mm
% \parindent=8mm
% \evensidemargin=0pt
% \oddsidemargin=0pt
% \frenchspacing
\usepackage[paper=a4paper, margin=2cm]{geometry}

\usepackage{hyperref}
\hypersetup{
   % bookmarks=true,         % show bookmarks bar?
    unicode=false,          % non-Latin characters in bookmarks
    pdftoolbar=true,        % show toolbar?
    pdfmenubar=true,        % show menu?
    pdffitwindow=false,     % window fit to page when opened
    pdfstartview={FitH},    % fits the width of the page to the window
    pdftitle={Cohomology Rings of Toric bundles and the ring of conditions},    % title
    pdfauthor={; Khovanskii, A. G.; Monin, L.},     % author
    pdfkeywords={Toric bundles} {Spherical Varieties} {Newton Polyhedra}  {Newton Okounkov bodies}, % list of keywords
    pdfnewwindow=true,      % links in new window
    colorlinks=true,       % false: boxed links; true: colored links
    linkcolor=blue,          % color of internal links
    citecolor=red,        % color of links to bibliography
    filecolor=magenta,      % color of file links
    urlcolor=cyan           % color of external links
}

%%%%%%%%%%%%%%%%%%%%%%%%%%%%%%%%%%%%%%%%%%%%%%%%%%%%%%%%%%%%%%%%%%
% Page Information

\hfuzz=5.002pt

%%%%%%%%%%%%%%%%%%%%%%%%%%%%%%%%%%%%%%%%%%%%%%%%%%%%%%%%%%%
% Environments

\theoremstyle{plain}
\newtheorem{theorem}{Theorem}[section]
\newtheorem{lemma}[theorem]{Lemma}
\newtheorem{proposition}[theorem]{Proposition}
\newtheorem{corollary}[theorem]{Corollary}

\theoremstyle{definition}
\newtheorem{definition}[theorem]{Definition}
\newtheorem{remark}[theorem]{Remark}

%%%%%%%%%%%%%%%%%%%%%%%%%%%%%%%%%%%%%%
% Macros and Shortcuts

\newcommand{\rleft}{\mathopen{}\mathclose\bgroup\left}
\newcommand{\rright}{\aftergroup\egroup\right}

\newcommand{\C}{{\mathbb{C}}}
\newcommand{\K}{{\mathbb{K}}}
\newcommand{\R}{{\mathbb{R}}}
\newcommand{\Z}{{\mathbb{Z}}}

\newcommand{\Pm}{{\mathcal{P}}}
\newcommand{\Am}{{\mathcal{AP}}}

\newcommand{\Hm}{{\mathcal{H}}}

\newcommand{\Ann}{{\mathrm{Ann}}}

\newcommand{\diff}{\mathop{}\!d}
\newcommand{\Diff}{{\mathrm{Diff}}}

\newcommand{\Hom}{{\mathrm{Hom}}}

\newcommand{\Vol}{{\mathrm{Vol}}}
\newcommand{\s}{{\mathrm{sign}}}
\newcommand{\tail}{{\mathrm{tail}}}
\newcommand{\ind}{{\mathrm{ind}}}
\newcommand{\rank}{{\mathrm{rank}}}

\begin{document}
\selectlanguage{english}

%%%%%%%%%%%%%%%%%%%%%%%%%%%%%%%%%%%%%%%%%%%%%%%%%%%%%%%%%%%%%%%% 

\title[Generalized virtual polytopes and quasitoric manifolds]{Generalized virtual polytopes and quasitoric manifolds}

\author{Askold Khovanskii}
\address[A.\,Khovanskii]{Department of Mathematics, University of Toronto, Toronto, Canada; Moscow Independent University, Moscow, Russia.}
\email{askold@math.utoronto.ca}
\author{Ivan Limonchenko}
\address[I.\,Limonchenko]{National Research University Higher School of Economics, Russian Federation}
\email{ilimonchenko@hse.ru}
\author{Leonid Monin}
\address[L.\,Monin]{Max Planck Institute for Mathematics in the Sciences, Leipzig, Germany}
\email{leonid.monin@mis.mpg.de}

\subjclass[2020]{57S12, 13F55, 55N45}
\keywords{Quasitoric manifold, starshaped sphere, virtual polytope, multi-fan, multi-polytope, moment-angle-complex, Stanley-Reisner ring}

\begin{abstract}
In this paper we develop a theory of volume polynomials of generalized virtual polytopes based on the study of topology of affine subspace arrangements in a real Euclidean space. We apply this theory to obtain a topological version of the BKK Theorem, the Stanley-Reisner and Pukhlikov-Khovanskii type descriptions for cohomology rings of generalized quasitoric manifolds.  
\end{abstract}

\maketitle

%%%%%%%%%%%%%%%%%%%%%%%%%%%%%%%%%%%%%%%%%%%%%%%%%%%%%%%%%%%%%%%%%

\section{Introduction}
\label{sec:intro}

In~\cite{PK92} Pukhlikov and the first author generalized the classical theory of finitely-additive measures of convex polytopes and proposed a geometric construction for a virtual polytope as a Minkowski difference of two convex polytopes. Based on this notion, in~\cite{KP} the same authors proved a Riemann-Roch type theorem linking integrals and integer sums of quasipolynomials over convex chains from a certain family. As a byproduct, they obtained a description for a cohomology ring of a complex nonsingular projective toric variety via a volume polynomial of a virtual polytope. A theory of mixed volumes of virtual convex bodies was developed in~\cite{timorin1999} in order to produce an 'elementary' proof of the classical g-theorem, motivated by the ideas of~\cite{KP} and the approach of~\cite{McMullen1993OnSP}. 

A topogical generalization of a complex nonsingular projective toric variety is known in toric topology as a (quasi)toric manifold. It was introduced and studied alongside with its real counterpart, a small cover, in~\cite{davis1991convex}: in particular, it was shown that the Stanley-Reisner description for cohomology rings holds for quasitoric manifolds. Since that time quasitoric manifolds and their generalization, torus manifolds~\cite{Masuda1999UnitaryTM,multifan}, have been studied intensively in toric topology and found numerous valuable applications in homotopy theory~\cite{Choi2008QuasitoricMO,hasui2016p,hasui2017p}, unitary~\cite{Buchstaber2007SpacesOP,Lu2016EXAMPLESOQ} and special unitary bordism~\cite{lu2016toric,Limonchenko2017CalabiYH}, hyperbolic geometry~\cite{Buchstaber2016OnMD,Buchstaber2017CohomologicalRO,Baralic2020ToricOA}, and other areas of research. 

A remarkable property of torus manifolds is that they acquire a combinatorial description in a similar way to that in the case of toric varieties. Namely, instead of a fan, it is based on the notions of a multi-fan and a multi-polytope, introduced and studied in~\cite{multifan}. A multi-fan is a collection of cones, which can overlap each other, unlike it was in the classical case for cones in an ordinary fan. A multi-polytope is a multi-fan alongside with a collection of affine hyperplanes orthogonal to the linear spans of its rays. The relation between a multi-polytope and its multi-fan is similar to the one between a polytope and its normal fan. In~\cite{ayzenberg2016volume} the theory of multi-polytopes was applied to prove a version of the BKK Theorem and the Pukhlikov-Khovanskii type description for cohomology rings of quasitoric manifolds. On the other hand, a Stanley-Reisner type description for the cohomology of certain torus manifolds was obtained in~\cite{panovmasuda} using methods and tools of the theory of manifolds with corners and equivariant topology.   

Smooth structures on quasitoric manifolds were constructed in~\cite{Buchstaber2007SpacesOP} by means of a topological analogue of the Cox construction, in which a coordinate subspace arrangement is replaced by a moment-angle manifold. By the result of~\cite{PanUs}, moment-angle-complexes over starshaped spheres have smooth structures. This allowed us in~\cite{KLM21} to introduce the class of generalized quasitoric manifolds consisting of quotient spaces of moment-angle-complexes over starshaped spheres by freely acting compact tori of maximal possible rank. 

This paper is devoted to developing the theory of generalized virtual polytopes and applying it in order to obtain a topological version of the BKK Theorem, the Stanley-Reisner and Pukhlikov-Khovanskii type descriptions for intersection rings of generalized quasitoric manifolds. 

%%%%%%%%%%%%%%%%%%%%%%%%%%%%%%%%%%%%%%%%%%%%%%%%%%%%%%%%%%%%%%%%%%%%%%%%%%%%%%%%%%%%%%%%%%%%%%%%%%%%%%%%%%%

{\bf{Generalized virtual polytopes and affine subspace arrangements.}} The first part of the paper is devoted to the theory of generalized virtual polytopes and integration of forms over them, based on studying the homotopy types of unions of affine subspace arrangements in real Euclidean spaces. The construction and the theory of generalized virtual polytopes were motivated by the properties of integral functionals on the space of smooth convex bodies. We discuss smooth convex bodies in Section 2.

Let $Q$ be a polynomial of degree $\leq k$  (homogeneous polynomial of degree $k$) on $\R^n$, $\omega=d x_1\wedge \dots\wedge d x_n$ be the standard volume form on $\R^n$, and let $C_s$ be the cone of strictly convex bodies $\Delta\subset \R^n$ with smooth boundary. Then the function 
$$
 F(\Delta)=\int_\Delta Q\omega   
$$
on the cone $C_s$ is a polynomial of degree $\leq k+n$  (homogeneous polynomial of degree $k+n$).

Now, to extend the domain of the integration functional to the entire vector space generated by the cone $C_s$, we introduce the notion of a \emph{virtual convex body} as a formal difference of convex bodies (with the usual identification $\Delta_1 - \Delta_2=\Delta_3 - \Delta_4 \Leftrightarrow \Delta_1 + \Delta_4=\Delta_2 + \Delta_3$). Then the following statement summarizes the results of Section 2:

let $M$ be the space of virtual convex bodies representable as a difference of convex bodies from the cone $C_s$. Then the functional $F$ on $C_s$ can be extended as an integral of the form $Q\omega$ over the chain of virtual convex bodies. Moreover, such an extension will be a polynomial on $M$.

In Section 3 we study the homological properties of unions $X$ of (finite) arrangements of affine subspaces $\{L_i\}$ in a real Euclidean space $L=\R^n$ by means of the nerves $K_X$ of their (closed) coverings by $L_i$'s.

Given two affine subspace arrangements indexed by the same finite set of indices $I$, we say that the nerve $K_X$ of the collection $\{L_i\}$ \emph{dominates} the nerve $K_Y$ of the collection $\{M_i\}$ if 
\[
\bigcap_{j\in J}L_j \ne \varnothing \text{ implies that }\bigcap_{j\in J}M_j \ne \varnothing \text{ for any } J\subset I,
\]
and we write $K_X\geq K_Y$ in this case. Furthermore, we say that a continuous map $f\colon X\to Y$ is {\emph{compatible}} with $K_X$ and $K_Y$ if 
\[
x\in L_{i_1}\cap \ldots \cap L_{i_k}\quad \text{ then } \quad f(x)\in M_{i_1}\cap \ldots \cap M_{i_k}.
\]

Our main tool in the study of the homological properties of unions of affine subspaces is the following result:
\begin{itemize}
    \item[(i)] If a map $f\colon X\to Y$ compatible with $K_X$ and $K_Y $ exists, then the condition $K_X\geq K_Y$ holds;
    \item[(ii)] if a map $f\colon X\to Y$ compatible with $K_X$ and $K_Y$ exists, then it is unique up to a homotopy;
    \item[(iii)] if a nerve $K_X$ is isomorphic to a nerve $K_Y$ and a map $f\colon X\to Y$ compatible with $K_X$ and $K_Y$ exists, then $f$ is a homotopy equivalence between $X$ and $Y$. 
\end{itemize}

We then prove that any union $X$ of affine subspaces has the so called {\emph{good triangulation}} (see Definition~\ref{goodtriangdef}) and use this fact to show that if $K_X\geq  K_Y$, then there is a map $f\colon X\to Y$ compatible with $K_X$ and $K_Y$.

Now, suppose we have an arrangement of affine hyperplanes $\{H_i\}$ in $L=R^n$. We call it {\emph{non-degenerate}} if there is no proper linear subspace $V\subset \R^n$ which is parallel to all the hyperplanes $H_i$. Then the union $X$ of such an arrangement has the homotopy type of a wedge of $(n-1)$-dimensional spheres, in which the number of spheres is equal to the number of bounded regions in $L\setminus X$, see also Theorem~\ref{thm:wedgespheres}. Therefore, each cycle $\Gamma\in H_{n-1}(X,\Z)$ can be represented as a linear combination $\Gamma=\sum \lambda_j \partial \Delta_j$, where each coefficient $\lambda_j$ equals the winding number of the cycle $\Gamma$ around a point $a_j \in \Delta_j\setminus \partial \Delta_j$. Here, $\Delta_j$ denotes the closure of a bounded open polyhedron, which is a bounded component of $L\setminus X$.

In Section 4 we study the homotopy properties of unions $X$ of (finite) arrangements of affine subspaces $\{L_i\}$ in a real Euclidean space $L=\R^n$ by means of the methods developed in Section 3 and the theory of smooth convex bodies in the space $L$. 

We say that the two hyperplane arrangements, $\Hm_1$ and $\Hm_2$, are {\emph{combinatorially equivalent}} if the corresponding nerves $K_{\Hm_1}, K_{\Hm_2}$ are isomorphic. Let $\Hm = \{H_1,\ldots, H_s\}, \Hm'=\{H_1',\ldots, H_s'\}$ be two combinatorially equivalent hyperplane arrangements, and let $X = \bigcup H_i$ and $Y = \bigcup H_i'$ be the corresponding unions of hyperplanes. Then there exists a canonical homotopy equivalence $f\colon X\to Y$. Moreover, we show that for any (finite) simplicial complex $K$ there exists a (finite) affine subspace arrangement $\{L_i\}$ such that the nerve of the (closed) covering of $X$ by the $L_i$'s is homotopy equivalent to $X$ and has the homotopy type of the simplicial complex $K$. 

In order to study the homotopy type of a union of affine subspaces in $\R^n$, we consider finite unions $U\subset \R^n$ of open convex bodies: $U=\bigcup U_i$; our goal is reduced to studying the homotopy type of the set $\R^n\setminus U$. We will do it making use of the following notion from convex geometry. By a {\emph{tail cone}} $\tail (U)$ of a convex body $U$ we mean the set of points $v\in \R^n$ such that for any $a\in U$ and $t\geq 0$, the inclusion $a+tv\in U$ holds.

It is easy to see that, for any convex set $U\subset \R^n$, its tail cone $\tail (U)$ has the following properties:

\begin{itemize}
    \item  The set $\tail (U)$ is a convex closed cone in $\R^n$. A convex set $U$ is bounded if and only if $\tail(U)$ is the origin $O\in \R^n$;
    \item If $\tail(U)$ is a vector space $V$, then for any transversal space $V^\prime$ (i.e. for any $V^\prime$ such that $\R^n=V \oplus V^\prime$), the set $U$ can be represented in the form $U = U^\prime \oplus V$, where $U^\prime=U\cap V^\prime$ is a bounded convex set. That is, if $\tail(U)$ is a vector space, then one has: $U=U^\prime \oplus \tail(U)$ for a certain bounded convex set $U^\prime$.
\end{itemize}

Our main result here can be stated as follows:

the set $\R^n\setminus U$ is homotopy equivalent to the set $\R^n\setminus \bigcup \{a_i+\tail(U_i)\}$, where the summation is taken over all $i$ such that $\tail (U_i)$ is a vector space.

Now, assume that all the linear spaces $V_i=\tail(U_i)$ above are equal to the same linear space $V$ and denote by $T$ a subspace transversal to $V$, i.e. such a linear subspace of $\R^n$ that $\R^n=T\oplus V$. Then the set $\R^n\setminus U$ is homotopy equivalent to $T\setminus \{b_i\}$, where $b_i:=T\cap \{a_i+V_i\}$. This statement totally describes the homotopy type of the set $\R^n\setminus \bigcup H_i$, where $\{H_i\}$ is any collection of affine hyperplanes in $\R^n$. Indeed, the complement $\R^n\setminus \bigcup H_i$ is a union of open convex sets. Moreover, the maximal linear subspaces contained in $\tail (U_i)$ are the same for each $U_i$: each of them is equal to the intersection of the linear spaces $\tilde H_i$ parallel to the affine hyperplanes $H_i$.

%%%%%%%%%%%%%%%%%%%%%%%%%%%%%%%%%%%%%%%%%%%%%%%%%%%%%%%%%%%%%%%%%%%%%%%%%%%%%%%%%%%%%%%%%%%%%%%%%%%%%%%%%%%%%%%%%%%%%

{\bf{Volumes of generalized virtual polytopes and intersection rings of generalized quasitoric manifolds.}} In the second part of the paper we apply the theory of volume polynomials of generalized virtual polytopes to study the cohomology rings of generalized quasitoric manifolds.

First, we construct a special cellular structure for generalized quasitoric manifolds and deduce the monomial and linear relations between characteristic submanifolds of codimension 2 in the their intersection rings. Then we prove a topological version of the BKK Theorem, based on the properties of the volume polynomial for a generalized virtual polytope, which yields a convex-theoretic formula for the self-intersection polynomial on the second cohomology of a generalized quasitoric manifold. Finally, we make use of the BKK Theorem as well as the description of a Poincar\'e duality algebra worked out in~\cite{KP, KhoM} to obtain the Pukhlikov-Khovanskii type description of the cohomology ring of a generalized quasitoric manifold.

In Section 5 we introduce the notion of a generalized virtual polytope and study the properties of integral functionals on the space of generalized virtual polytopes. Suppose $\Delta$ is a triangulation of an $(n-1)$-dimensional sphere on the vertex set $V(\Delta)=\{v_1,\ldots,v_m\}$. In what follows, we will identify a simplex of $\Delta$ with the set of its vertices viewed as a subset in $\{1,2,\ldots,m\}$. 

A map $\lambda\colon V(\Delta) \to (\R^n)^*$ is called a {\emph{characteristic map}} if for any vertices $v_{i_1},\ldots, v_{i_r}$ belonging to the same simplex of $\Delta$ the images $\lambda(v_{i_1}),\ldots, \lambda(v_{i_r})$ are linearly independent (over $\R$). Similarly, one can define the notion of an integer {\emph{characteristic map}} $\lambda\colon V(\Delta) \to (\Z^n)^*$.

Such a map defines an $m$-dimensional family of hyperplane arrangements $\Am$ in the following way. For any $h=(h_1,\ldots,h_m)\in \R^m$, the arrangement $\Am(h)$ is given by
\[
\Am(h) = \{H_1, \ldots, H_m\} \text{ with } H_i = \{\ell_i(x) = h_i\},
\]
where we denote by $\ell_i$ the linear function $\lambda(v_i)$ for each $i\in [m]:=\{1,2,\ldots,m\}$. Given a subset $I\subset [m]$, we also denote $H_I = \bigcap_{j\in I} H_j$. If $I\in\Delta$, then $\Gamma_I$ denotes the face dual to $I$ in the polyhedral complex $\Delta^\perp$ dual to the simplicial complex $\Delta$. By definition, facets of $\Delta^\perp$ are closed stars in $\Delta^\prime$ of the vertices of $\Delta$ viewed as vertices of its barycentric subdivision $\Delta^\prime$.

By a {\emph{generalized virtual polytope}} we mean a map $f\colon \Delta^\perp\to  \bigcup_{\Am(h)} H_i$ {\emph{subordinate}} to the characteristic map $\lambda$; that is, for any $I\subset [m]$, we have: 
$$
f(\Gamma_I)\subset H_I.
$$

Let $U$ be a bounded region of $\R^n\setminus \bigcup_{\Am(h)} H_i$ and $W(U,f)$ be a winding number of a map $f$. Given a polynomial $Q$ on $\R^n$, let us consider the following integral functional on the space of generalized virtual polytopes:
\[
I_Q(f):=\sum W(U,f)\int_U Q\omega.
\]

The key result of Section 5 is the computation of all partial derivatives of $I_Q(f)$, leading us to the following statement. Let $I = \{ i_1, \ldots, i_r\} \subseteq [m]$ be such that $I\notin\Delta$ and $k_1, \ldots, k_r$ be positive integers. Then we have
  \[
    \partial_{i_1}^{k_1} \cdots \partial_{i_r}^{k_r} \left(I_Q\right)(f) = 0 \text{.}
  \]
However, if $r = n = \dim\Delta+1$ and $I$ is a simplex in $\Delta$ dual to the vertex $A\in \Delta^{\perp}$, then we have 
  \[
    \partial_I \left(I_Q\right)(f) = \s(I)  Q(A) \cdot |\det (e_{i_1}, \ldots, e_{i_n})| \text{.}
  \] 

We observe that the volume of the oriented image $f_{h}(\Delta^{\perp})\subset\R^n$ is a function on the real vector space $\mathcal L=\{f_{h}\colon \Delta^{\perp}\to\R^n\}$ and its value $Vol({f_h})$ on a generalized virtual polytope $f_{h}$ is a homogeneous polynomial in $h_{1},\ldots,h_{m}$ of degree $n$. This observation and the previous result yield the values of all the partial derivatives of order $n$ for the volume polynomial $\Vol(f_h)$ of the generalized virtual polytope $f_h$ and hence give us this homogeneous polynomial itself. 

We start Section 6 by recalling the notion of a generalized quasitoric manifold introduced in~\cite{KLM21}. In what follows we assume that $K=K_\Sigma$ is a {\emph{starshaped sphere}}, i.e. an intersection of a complete simplicial fan $\Sigma$ in $\R^n\simeq N\otimes_{\Z}\R$ with the unit sphere $S^{n-1}\subset\R^n$. In this case, the moment-angle-complex $\mathcal Z_K$ acquires a smooth structure, see~\cite{PanUs}. Let further, $\Lambda\colon \Sigma(1) \to N$ be a characteristic map. Then the $(m-n)$-dimensional subtorus $H_\Lambda:=\ker\exp\Lambda\subset (S^1)^m$ acts freely on $\mathcal Z_K$ and the smooth manifold $X_{\Sigma,\Lambda}:=\mathcal Z_K/H_\Lambda$ is called a \emph{generalized quasitoric manifold}. 

Our description for the cohomology of $X_{\Sigma,\Lambda}$ goes in three steps:
\begin{itemize}
    \item[(i)] We provide a special cell decomposition for $X_{\Sigma, \Lambda}$ and show that $H^*(X_{\Sigma, \Lambda})$ is generated by the classes of characteristic submanifolds of codimension~$2$;
    \item[(ii)] We deduce the monomial and linear relations between classes of characteristic submanifolds of codimension $2$ in $H_*(X_{\Sigma, \Lambda})$;
    \item[(iii)] We prove a topological version of the BKK Theorem for $X_{\Sigma, \Lambda}$ and then use it to get the Pukhlikov-Khovanskii type description of the intersection ring $H_*(X_{\Sigma, \Lambda})$.
\end{itemize}

It is worth mentioning that the steps (ii) and (iii) above could be used in the much more general setting of torus manifolds. However, in this general case the algebra obtained by the Pukhlikov-Khovanskii description might be different from the intersection ring (cohomology ring). Indeed, the algebra (Theorem~\ref{PKHquasitoric:thm}) computed via the self-intersection polynomial (Theorem~\ref{thm:BKK}) is the Poincar\'e duality quotient of the subalgebra of the cohomology ring generated by classes of characteristic submanifolds of codimension~$2$.

%%%%%%%%%%%%%%%%%%%%%%%%%%%%%%%%%%%%%%%%%%%%%%%%%%%%%%%%%%%%%%%%%%%%%%%%%%%%%%%%%%%%%%%%%%%%%%%%%%%%%%%%%%%%%%%%%%%%%%%%%%%%

\subsection*{Acknowledgements}
We are grateful to Anton Ayzenberg, Victor Buchstaber, Michael Davis, Megumi Harada, Johannes Hofscheier, and Taras Panov for several fruitful and inspiring discussions. The first author is partially supported by the Canadian Grant No.~156833-17. The second author has been funded within the framework of the HSE University Basic Research Program. The second author is also a Young Russian Mathematics award winner and would like to thank its sponsors and jury.

%%%%%%%%%%%%%%%%%%%%%%%%%%%%%%%%%%%%%%%%%%%%%%%%%%%%%%%%%%%%%%%%%

\section{Smooth convex bodies and the space of maps $f:S^{n-1}\to \R^n$}
In this section we consider a motivational construction of smooth virtual convex bodies. 

Consider a set of smooth maps 
$$
f:S^{n-1}\to \R^n.
$$
Such a set forms a vector space under scaling and pointwise addition of functions:
\[
(f_1+f_2)(x) = f_1(x)+f_2(x),\quad (\lambda f)(x) = \lambda f(x).
\]

For a strictly convex smooth  body $\Delta\subset \R^n$, its boundary $\partial \Delta$ can be identified with the image of the unit sphere under a Gauss map 
$$
f_{\Delta}:S^{n-1}\to \partial \Delta.
$$
In terms of the support function $H_\Delta$ of $\Delta$, the map $f_{\Delta}$ is equal to the restriction of the gradient $\mathrm{grad} H_\Delta$ to the sphere $S^{n-1}$. Thus, we got an inclusion of the space of strictly convex smooth bodies (and their formal differences) into the space of smooth mappings from $S^{n-1}$ to $\R^n$. This inclusion respects the Minkowski addition of convex bodies.

We will be interested in integral functionals on the space of convex bodies. First notice, that one can express the integral $\int_\Delta\omega$ in terms of the corresponding map $f_\Delta$:
\[
\int_\Delta\omega=\int_{S^{n-1}}f^*\alpha,
\]
where $\alpha$ is any form such that $d \alpha=\omega$.

Let $\alpha$ be an $(n-1)$-form on $\R^n$ given by 
\[
\alpha=P_1  \widehat {dx_1} \wedge \dots \wedge dx_n+ \dots + P_n  dx_1 \wedge \dots \wedge\ \widehat{dx_n}.
\]
Here, the symbol $\widehat {dx_i}$ means that the term $dx_i$ is missing. The following theorem is obvious.

\begin{theorem}\label{thm:intmaps} If all coefficients $P_i$ of the form $\alpha$ are polynomials of degree $\leq k$ on $\R^n$, then the function $\int_{S^{n-1}} f^*\alpha$ on the space of smooth mappings $f\colon S^{n-1}\rightarrow  \R^n$ is a polynomial of degree $\leq k+n-1$.

If all coefficients $P_i$ of the form $\alpha$ are homogeneous polynomials of degree $k$, then the function $\int_{S^{n-1}} f^*\alpha$ is a homogeneous polynomial of degree $k+n-1$ on the space of smooth mappings.
\end{theorem}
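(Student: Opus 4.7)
The plan is to prove the statement by a direct pointwise analysis of the pullback $f^{*}\alpha$, followed by a standard argument that integration against a fixed measure preserves polynomiality in the integrand. The only real content is to show that $f^{*}\alpha$ evaluated at each point of $S^{n-1}$ depends on $f$ polynomially with the claimed degree bound; once this is done, the statement about $\int_{S^{n-1}} f^{*}\alpha$ will follow by a linearity argument.

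First I would write out $f^{*}\alpha$ explicitly. Writing $f=(f_{1},\ldots ,f_{n})$, the pullback is
\[
f^{*}\alpha \;=\; \sum_{i=1}^{n} (\pm 1)\, P_{i}(f_{1},\ldots ,f_{n})\, df_{1}\wedge \cdots \wedge \widehat{df_{i}}\wedge \cdots \wedge df_{n}.
\]
Fix a point $x\in S^{n-1}$. I would then check two elementary facts: (a) the scalar $P_{i}(f(x))$ depends on $f$ polynomially of degree $\leq k$ (respectively, homogeneously of degree $k$), since $P_{i}$ is a polynomial of degree $\leq k$ (resp.\ homogeneous of degree $k$) on $\R^{n}$ and $f(x)\in\R^{n}$ is linear in $f$; (b) each factor $df_{j}|_{x}$, viewed as an element of $T^{*}_{x}S^{n-1}$, is linear in $f$, so the wedge product $df_{1}\wedge \cdots \wedge \widehat{df_{i}}\wedge \cdots \wedge df_{n}$ at $x$ is homogeneous of degree $n-1$ in $f$. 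Multiplying, the $(n-1)$-form $f^{*}\alpha|_{x}$ is polynomial in $f$ of degree $\leq k+n-1$ (respectively, homogeneous of degree $k+n-1$).

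To pass from the pointwise statement to the integral, I would use the standard polarization description of polynomials on a vector space: a function $\Phi$ on a real vector space $V$ is a polynomial of degree $\leq d$ if and only if $t\mapsto \Phi(f_{0}+tg)$ is a polynomial in $t$ of degree $\leq d$ for every $f_{0},g\in V$, and it is homogeneous of degree $d$ iff $\Phi(tf)=t^{d}\Phi(f)$. Applied to $\Phi(f)=\int_{S^{n-1}} f^{*}\alpha$, differentiation in $t$ commutes with the (absolutely convergent) integral over the compact manifold $S^{n-1}$, so the degree bound established pointwise carries over to the integral; the same argument handles the homogeneous case via $f\mapsto tf$ and the change of variables $P_{i}(tf)=t^{k}P_{i}(f)$, $d(tf_{j})=t\, df_{j}$.

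There is essentially no obstacle here: the only mildly subtle point is to formulate precisely what "polynomial on the infinite-dimensional space of smooth maps $S^{n-1}\to\R^{n}$" means, which is why I would rely on the polarization/restriction-to-finite-dimensional-subspaces characterization rather than trying to write down explicit multilinear forms. Everything else reduces to the observation that both pointwise multiplication of polynomials and integration against a fixed smooth measure respect polynomial degree.
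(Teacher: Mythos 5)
Your proof is correct and is precisely the argument the authors have in mind: the paper gives no proof at all (it declares the theorem ``obvious''), and the intended justification is exactly your pointwise expansion of $f^{*}\alpha$, in which $P_{i}(f(x))$ contributes degree $\leq k$ and the $n-1$ factors $df_{j}|_{x}$ each contribute degree $1$, followed by integration over the compact sphere. Your care with the meaning of ``polynomial'' on the infinite-dimensional space of maps (via restriction to lines and the scaling argument for the homogeneous case) is a reasonable way to make the statement precise, matching the informal usage in the paper.
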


\subsection{Integral functional on the space of maps and winding numbers}
For an $(n-1)$-form $\alpha$ on $\R^n$ and a smooth map $f\colon S^{n-1}\to \R^n$, one can give a different way to compute the integral $\int_{S^{n-1}}f^*\alpha$. Let $U\subset \R^n$ be a connected component of $\R^n\setminus f(S^{n-1})$.

 \begin{definition}The \emph{winding number} $W(U,f)$ of $U$ with respect to $f$ is the mapping degree of the map  
 \begin{equation}\label{eq:wind}
     \frac {f-a}{||f-a||}\colon S^{n-1}\rightarrow S^{n-1}, 
 \end{equation} where $a$ is any point in $U$.
\end{definition}
The mapping degree is well defined, i.e. is independent of the choice of $a\in U$, since maps \eqref{eq:wind} for different $a\in U$ are homotopic to each other.

\begin{proposition}\label{prop:intwind} For any smooth $(n-1)$-form $\alpha $ on $\R^n$ and for any smooth mapping $f\colon S^{n-1}\rightarrow \R^n$ the following identity holds:
$$
\int_{S^{n-1}}f^*\alpha=\sum W(U,f)\int_U d\alpha
 $$
where the sum is taken over all connected components $U$ of the complement \linebreak $\R^n\setminus f(S^{n-1})$.
\end{proposition}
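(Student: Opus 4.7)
The plan is to interpret both sides as evaluations of currents on $\R^n$ and then invoke Stokes' theorem. Since $f(S^{n-1})$ is compact, $\R^n\setminus f(S^{n-1})$ has a unique unbounded component $U_\infty$; for $a\in U_\infty$ with $|a|$ sufficiently large, the map $x\mapsto (f(x)-a)/|f(x)-a|$ is null-homotopic via the family $x\mapsto (f(x)-ta)/|f(x)-ta|$, $t\in[0,1]$, whence $W(U_\infty,f)=0$. Thus the right-hand sum effectively ranges over bounded components $U$, each of finite Lebesgue measure on which $d\alpha$ is integrable. Applying Stokes' theorem on each such $U$ rewrites the right-hand side as $\sum_U W(U,f)\int_{\partial U}\alpha$, so the goal becomes to verify
\[
\int_{S^{n-1}} f^*\alpha = \sum_U W(U,f) \int_{\partial U} \alpha.
\]

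Next I would reduce to generic $f$. Both sides depend continuously on $f$ in the $C^1$-topology: the left side obviously, and the right side because the locally constant winding number function $a\mapsto W(U,f)$ stays uniformly bounded on compact sets under small perturbations and jumps only across $f(S^{n-1})$, which has measure zero in $\R^n$, so dominated convergence applies. Hence it suffices to treat smooth immersions whose self-intersection locus $\Sigma\subset f(S^{n-1})$ is a finite union of submanifolds of codimension at least two meeting transversally. Outside $\Sigma$, $f(S^{n-1})$ is a smoothly embedded hypersurface, and each of its points lies on the common boundary of exactly two bounded regions, with opposite induced orientations.

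The core of the argument is a local identification at smooth points of $f(S^{n-1})\setminus\Sigma$. At such a point $p$, the single sheet through $p$ bounds two regions $U_1, U_2$ and contributes $(W(U_1,f)-W(U_2,f))\alpha$ to the sum $\sum_U W(U,f)\int_{\partial U}\alpha$. I claim this difference equals the algebraic count, weighted by the sign of the Jacobian determinant, of the preimages of $p$ under $f$; this is verified by letting $a$ cross the sheet at $p$ and computing the resulting change in the mapping degree of $x\mapsto (f(x)-a)/|f(x)-a|$ by a direct local model. Summing these local contributions over $f(S^{n-1})\setminus\Sigma$ reconstructs $\int_{S^{n-1}} f^*\alpha$ via the change-of-variables formula, since $\Sigma$ has measure zero in both $\R^n$ and (via $f^{-1}$) in $S^{n-1}$.

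The hard part will be the orientation bookkeeping in the local identification step: one must consistently orient $S^{n-1}$, $\R^n$, each bounded region $U$, and $\partial U$, and then verify that the jump in winding number across a smooth sheet equals the algebraic push-forward multiplicity of $f$. A secondary technical point is the continuity argument in the reduction to generic $f$, where one should justify that the sum on the right behaves continuously under small $C^1$ perturbations of $f$, despite possible topological changes in the arrangement of bounded regions.
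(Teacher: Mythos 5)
Your argument is correct in outline, but it is substantially longer than what the paper's one-line appeal to Stokes suggests, and the reduction to generic $f$ is avoidable. The cleaner route is to extend $f$ to a smooth map $F\colon D^n\to \R^n$ and apply Stokes once on the disk: $\int_{S^{n-1}} f^*\alpha = \int_{D^n} F^*(d\alpha)$. The degree (change-of-variables) formula for maps of oriented $n$-manifolds then gives $\int_{D^n} F^*(d\alpha) = \int_{\R^n} \deg_a(F)\, d\alpha$, and $\deg_a(F)$ coincides with $W(U,f)$ for $a\in U$ since both equal the mapping degree of $(f-a)/|f-a|\colon S^{n-1}\to S^{n-1}$. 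This bypasses the perturbation to generic $f$, the chamber-by-chamber Stokes step, and the orientation bookkeeping on the self-intersecting image entirely. In your version, note also that the logical order should be inverted: passing to generic $f$ must precede the application of Stokes on each $U$, since for a non-generic $f$ the boundary $\partial U\subset f(S^{n-1})$ need not be piecewise smooth; and the claim that a smooth point of $f(S^{n-1})$ borders two \emph{bounded} regions is a slip (one side may be the unbounded component), harmless because that winding number vanishes. Finally, the $C^1$-continuity of the right-hand side, on which your genericity reduction rests, needs a uniform $L^1$ bound on $a\mapsto W(a,f_t)$ near $f_t(S^{n-1})$; you flag this point but do not supply the bound.
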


\begin{proof} 
Follows from the Stokes's formula.
\end{proof}

\begin{theorem}\label{thm:polyint} Let $Q$ be a polynomial of degree $\leq k$  (homogeneous polynomial of degree $k$) on $\R^n$ and let $\omega=d x_1\wedge \dots\wedge d x_n$ be the standard volume form on $\R^n$. Then the function
\[
\sum W(U,f)\int_U Q\omega
\]
on the space of smooth mappings is a polynomial of degree $\leq k+n$  (homogeneous polynomial of degree $k+n$).
\end{theorem}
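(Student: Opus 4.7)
The plan is to reduce the statement to the combination of Proposition~\ref{prop:intwind} and Theorem~\ref{thm:intmaps}. The essential observation is that Proposition~\ref{prop:intwind} rewrites the ``winding number'' sum $\sum W(U,f)\int_U d\alpha$ as $\int_{S^{n-1}}f^*\alpha$, so if we manage to express $Q\omega$ as $d\alpha$ for a suitable $\alpha$, then Theorem~\ref{thm:intmaps} applied to $\alpha$ will give exactly the claimed polynomial dependence on $f$.

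The main step is therefore to produce, for every polynomial $Q$ of degree $\leq k$ on $\R^n$, an $(n-1)$-form $\alpha$ with polynomial coefficients of degree $\leq k+1$ such that $d\alpha=Q\omega$, and moreover such that if $Q$ is homogeneous of degree $k$ then the coefficients of $\alpha$ are homogeneous of degree $k+1$. By linearity it suffices to treat a single monomial $Q=x_1^{a_1}\cdots x_n^{a_n}$ with $a_1+\dots+a_n\leq k$; for such $Q$ one may simply set
\[
\alpha \;=\; \frac{1}{a_1+1}\,x_1^{a_1+1}x_2^{a_2}\cdots x_n^{a_n}\, dx_2\wedge\cdots\wedge dx_n,
\]
which is an $(n-1)$-form whose coefficients are polynomials of degree $a_1+\dots+a_n+1\leq k+1$ (and homogeneous of degree $k+1$ if $Q$ is homogeneous of degree $k$), and a direct computation gives $d\alpha=Q\omega$. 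Summing such primitives over the monomials of $Q$ produces the desired $\alpha$.

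Once $\alpha$ is in hand, Proposition~\ref{prop:intwind} gives
\[
\sum W(U,f)\int_U Q\omega \;=\; \sum W(U,f)\int_U d\alpha \;=\; \int_{S^{n-1}} f^*\alpha,
\]
and Theorem~\ref{thm:intmaps} applied to $\alpha$, whose coefficients have degree $\leq k+1$, shows that the right-hand side is a polynomial of degree $\leq (k+1)+(n-1)=k+n$ in $f$. In the homogeneous case, the coefficients of $\alpha$ are homogeneous of degree $k+1$, so Theorem~\ref{thm:intmaps} yields a homogeneous polynomial of degree $k+n$, which is exactly the claim.

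I do not anticipate a serious obstacle: the only non-automatic ingredient is the explicit antiderivative construction above, and that is routine once the monomial case is written down. The one minor point to watch is that $f^*\alpha$ in Proposition~\ref{prop:intwind} is required only along $S^{n-1}$, so no global integrability issues arise when $U$ is unbounded; the winding number $W(U,f)$ vanishes for the unique unbounded component of $\R^n\setminus f(S^{n-1})$, which is why the sum $\sum W(U,f)\int_U Q\omega$ is well-defined and agrees with $\int_{S^{n-1}}f^*\alpha$.
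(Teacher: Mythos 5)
Your proof is correct and follows essentially the same route as the paper: the paper also chooses a primitive $P$ with $\partial P/\partial x_1=Q$, sets $\alpha=P\,dx_2\wedge\dots\wedge dx_n$, and combines Proposition~\ref{prop:intwind} with Theorem~\ref{thm:intmaps}. Your only addition is the explicit monomial-by-monomial construction of $P$, which the paper leaves implicit.
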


\begin{proof} Consider an $(n-1)$ form $\alpha=P d x_2\wedge \dots\wedge d x_n$, where $P$ is a degree $k+1$ polynomial such that  $\partial P/\partial x_1=Q$. Clearly, $d\alpha=Q\omega$. Thus the statement follows from Theorem~\ref{thm:intmaps} and Proposition~\ref{prop:intwind}.
\end{proof}

Let us denote by $C_s$ the cone of strictly convex bodies $\Delta\subset \R^n$ with smooth boundaries. As a corollary, we obtain the following result.
\begin{corollary}
Let $Q$ and $\omega$ be the same as before. Then the function 
\begin{equation}\label{eq:convint}
 F(\Delta)=\int_\Delta Q\omega   
\end{equation}
on the cone $C_s$ is a polynomial of degree $\leq k+n$  (homogeneous polynomial of degree $k+n$).
\end{corollary}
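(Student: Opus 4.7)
The plan is to deduce the corollary as an immediate consequence of Theorem~\ref{thm:polyint}, via the Gauss map embedding of $C_s$ into the space of smooth mappings $S^{n-1}\to \R^n$ discussed at the beginning of the section. So the whole proof reduces to: (a) identifying $F(\Delta)$ with the functional $\sum W(U,f)\int_U Q\omega$ evaluated at $f=f_\Delta$, and (b) noting that $\Delta \mapsto f_\Delta$ is linear with respect to the Minkowski cone structure.

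For step (a), I would first recall that for $\Delta\in C_s$ the Gauss map $f_\Delta:S^{n-1}\to\partial\Delta$ is a diffeomorphism, so $f_\Delta(S^{n-1})=\partial\Delta$ and $\R^n\setminus f_\Delta(S^{n-1})$ has exactly two connected components: the open interior $\mathrm{int}(\Delta)$ and the unbounded exterior. A standard winding-number computation (for $a\in\mathrm{int}(\Delta)$ the map $(f_\Delta-a)/\|f_\Delta-a\|:S^{n-1}\to S^{n-1}$ is homotopic to the identity via the convex combination with the outward Gauss map, giving degree $1$; for $a$ exterior it is homotopic to a constant, giving degree $0$) yields
\[
\sum_{U} W(U,f_\Delta)\int_U Q\omega \;=\; 1\cdot\int_{\mathrm{int}(\Delta)} Q\omega \;+\; 0\cdot\int_{\text{ext}} Q\omega \;=\; F(\Delta).
\]

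For step (b), I would invoke that $f_\Delta=\mathrm{grad}\,H_\Delta|_{S^{n-1}}$, where $H_\Delta$ is the support function of $\Delta$. Since the support function satisfies $H_{\Delta_1+\Delta_2}=H_{\Delta_1}+H_{\Delta_2}$ and $H_{\lambda\Delta}=\lambda H_\Delta$ for $\lambda\geq 0$, the assignment $\Delta\mapsto f_\Delta$ is linear with respect to Minkowski addition and positive scaling. Hence it extends to a linear inclusion of the real vector space generated by $C_s$ (the space of smooth virtual convex bodies) into the vector space of smooth mappings $S^{n-1}\to\R^n$.

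Combining (a) and (b): the function $\Phi(f):=\sum W(U,f)\int_U Q\omega$ on the space of smooth mappings is a polynomial of degree $\leq k+n$ by Theorem~\ref{thm:polyint} (homogeneous of degree $k+n$ when $Q$ is homogeneous of degree $k$), and $F(\Delta)=\Phi(f_\Delta)$ is the composition of this polynomial with a linear map, hence again a polynomial of degree $\leq k+n$ (respectively homogeneous of degree $k+n$) on $C_s$. There is no real obstacle here; the only point one must be careful about is the verification of the winding numbers for $f_\Delta$, which is the one place where strict convexity and smoothness of $\partial\Delta$ are used to guarantee that $f_\Delta$ is an embedding with image $\partial\Delta$ bounding exactly the two regions described above.
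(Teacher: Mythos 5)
Your proposal is correct and follows essentially the same route as the paper: the paper's proof likewise identifies $F(\Delta)$ with the winding-number functional at $f_\Delta=\mathrm{grad}\,H_\Delta$, notes that $\R^n\setminus f_\Delta(S^{n-1})$ has exactly the two components with winding numbers $0$ and $1$, and then invokes Theorem~\ref{thm:polyint}. Your extra step (b), making explicit that $\Delta\mapsto f_\Delta$ is Minkowski-linear, is the compatibility the paper records earlier in the section when introducing the Gauss-map inclusion, so it is the same argument spelled out slightly more fully.
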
 

\begin{proof} Indeed, for the map $f=\mathrm{grad}\,H_\Delta\colon S^{n-1}\rightarrow \R^n$ there are exactly two connected components of $\R^n\setminus f(S^{n-1})$: the component $U_1=\R^n\setminus \Delta$ and the component $U_2 = int(\Delta)$. Moreover, the corresponding winding numbers are
$$
W(U_1,f)=0;\ \  W(U_2,f)=1.
$$
Thus, the statement follows from Theorem~\ref{thm:polyint}.
\end{proof}

We would like to extend the integration functional to the vector space generated by the cone $C_s$.  
 \begin{definition} 1) A \emph{virtual convex body} is a formal difference of convex bodies (with the usual identification $\Delta_1- \Delta_2=\Delta_3- \Delta_4 \Leftrightarrow \Delta_1+ \Delta_4=\Delta_2+ \Delta_3$);

2) A {\it support function of a virtual convex body} $\Delta=\Delta_1- \Delta_2$ is the difference of the support functions of $\Delta_1$ and $\Delta_2$;

3) A {\it chain of virtual convex bodies with a smooth support function $H$} is the set of connected components $U$ of the complement $\R^n\setminus\mathrm{grad}\,H(S^{n-1})$ taken with the coefficients $W(U,\mathrm{grad}\,H)$.
\end{definition}

The following theorem summarizes the results of this section.
\begin{theorem} Let $M$ be the space of virtual convex bodies representable as differences between convex bodies from the cone $C_s$. Then the function \eqref{eq:convint} on $C_s$ can be extended to $M$ as an integral of the form $Q\omega$ over the chain of virtual convex bodies. Moreover, such an extension is given by a polynomial on $M$.
\end{theorem}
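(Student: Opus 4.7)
The plan is to reduce the statement to \emph{Theorem~\ref{thm:polyint}} by setting up the correct linear embedding of the space $M$ into the space of smooth maps $S^{n-1}\to\R^n$. First I would define the map
\[
\Phi\colon M \to C^\infty(S^{n-1},\R^n), \qquad \Phi(\Delta) = \mathrm{grad}\, H_\Delta\big|_{S^{n-1}},
\]
where the support function of a virtual body $\Delta = \Delta_1 - \Delta_2$ is $H_\Delta = H_{\Delta_1} - H_{\Delta_2}$. One needs to check that $\Phi$ is well-defined on equivalence classes of formal differences: if $\Delta_1+\Delta_4 = \Delta_2+\Delta_3$, then Minkowski additivity of the support function gives $H_{\Delta_1}+H_{\Delta_4}=H_{\Delta_2}+H_{\Delta_3}$, and applying $\mathrm{grad}(\cdot)|_{S^{n-1}}$ termwise shows that $\Phi(\Delta_1-\Delta_2)=\Phi(\Delta_3-\Delta_4)$. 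Linearity of $\Phi$ is immediate from the same additivity.

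Then I would define the desired extension as the composition
\[
\tilde F(\Delta) \;:=\; \sum_{U} W\bigl(U, \Phi(\Delta)\bigr) \int_U Q\,\omega,
\]
the sum being over connected components $U$ of $\R^n \setminus \Phi(\Delta)(S^{n-1})$. By construction this is precisely the integral of $Q\omega$ over the chain of virtual convex bodies associated to $\Delta$, so the interpretation promised in the statement is automatic. That $\tilde F$ extends the original functional $F$ on $C_s$ is exactly the content of the earlier corollary: for $\Delta\in C_s$ the map $\Phi(\Delta)$ is the Gauss map of $\partial\Delta$, whose image bounds only two components with winding numbers $0$ and $1$, and the sole contribution is $\int_\Delta Q\omega$.

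For the polynomial character of $\tilde F$, I would invoke \emph{Theorem~\ref{thm:polyint}}, which says that the functional
\[
I_Q(f) \;=\; \sum_U W(U,f)\int_U Q\,\omega
\]
on the space of smooth maps $S^{n-1}\to\R^n$ is a polynomial of degree $\leq k+n$. Since $\tilde F = I_Q\circ \Phi$ and $\Phi$ is $\R$-linear, the pullback of a polynomial by a linear map is a polynomial of the same degree bound, which gives the conclusion (with homogeneity of degree $k+n$ preserved in the homogeneous case).

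The only subtle point I anticipate is verifying well-definedness of $\Phi$ on $M$, i.e.\ that the grad-of-support-function construction respects the Minkowski cancellation relation on formal differences; once that is in place, the rest is a formal assembly of previously proved facts and no further analytic work is required. Essentially, all the genuine content — the polynomial dependence on the map $f$ and the winding-number reformulation of the integral — has already been carried out in Theorem~\ref{thm:polyint} and Proposition~\ref{prop:intwind}.
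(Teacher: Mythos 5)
Your proposal is correct and follows exactly the route the paper intends: the paper states this theorem as a summary of Section 2 (with no written proof), and the intended argument is precisely your assembly — embed $M$ linearly into the space of smooth maps via $\Delta\mapsto \mathrm{grad}\,H_\Delta|_{S^{n-1}}$, define the extension as the winding-number integral (the chain of virtual convex bodies), and invoke Theorem~\ref{thm:polyint} together with the corollary for $C_s$. Your extra care about well-definedness of $\Phi$ on equivalence classes of formal differences is a reasonable detail the paper leaves implicit.
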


\section{Unions of affine subspaces}

In this section we study homological properties of unions of (finite) affine subspace arrangements in a vector space $L\simeq \R^n$. Let $I$ be a finite set of indices. Consider a set $\{L_i\}$ of affine subspaces in a vector space $L$ indexed by elements $i\in I$ and let $X=\cup_{i\in I}L_i$ be their union. 

First, we define the main combinatorial invariant of a union of a collection affine subspaces. Note that the topological space $X$ has a natural covering by the affine subspaces~$L_i$. 

\begin{definition} The nerve $K_X$ of the natural covering of $X$ is the simplicial complex with vertex set indexed by $I$, i.e. one vertex for each index $i\in I$. A set of vertices $v_{i_1},\dots,v_{i_k}$ defines a simplex in $K_X$ if and only if the intersection $L_{i_1}\cap \dots\cap L_{i_k}$ is not empty.
\end{definition}

Consider another collection of affine subspaces $\{M_i\}$ in a vector space $M$ indexed by the same set of indices $I$ and with the complex $K_Y$ corresponding to the natural covering of $Y$.

\begin{definition} We will say that the nerve $K_X$ of the collection $\{L_i\}$ \emph{dominates} the nerve $K_Y$ of the collection $\{M_i\}$ if 
\[
\bigcap_{j\in J}L_j \ne \varnothing \text{ implies that }\bigcap_{j\in J}M_j \ne \varnothing \text{ for any } J\subset I.
\]
We will write $K_X\geq K_Y$ in this case.

We say that the nerves $K_X$ and $K_Y$ are \emph{equivalent} if $K_X\geq K_Y$ and $K_Y\geq K_X$.
\end{definition}

Note that if $K_X\geq K_Y$, then there is a natural inclusion $K_X\to K_Y$. Moreover, if $K_X$ and $K_Y$ are equivalent, then this inclusion provides an isomorphism between these complexes.

\subsection{Maps compatible with coverings}
In this subsection we introduce our main tool in the study of unions of affine subspace arrangements. Let as before $X = \cup_{i\in I}L_i$ and $Y = \cup_{i\in I}M_i$ be two collections of affine subspaces indexed by a finite set $I$. First, we will need the next definition.

\begin{definition}\label{def:I(x)} For a point $x\in X=\cup_{i\in I} L_i$, let $I(x)$ be the subset of indices in $I$ such that
\[
x\in L_i \text{ if and only if } i\in I(x).
\]
For two points, $x\in X$ and $y\in Y$, we write $x\geq y$ if $I(x)\supset I(y)$.
\end{definition}

In particular, Definition~\ref{def:I(x)} leads us to the following notion.

\begin{definition} A continuous map $f\colon X\to Y$ is {\emph{compatible}} with $K_X$ and $K_Y$ if for any $x\in X$, we have $x\leq f(x)$. In other words, if 
\[
x\in L_{i_1}\cap \ldots \cap L_{i_k}\quad \text{ then } \quad f(x)\in M_{i_1}\cap \ldots \cap M_{i_k}.
\]
\end{definition}

The following theorem is our main tool in the study of homological properties of unions of affine subspace arrangements.

\begin{theorem}\label{thm1} The following statements hold.
\begin{itemize}
    \item[(i)] If a map $f\colon X\to Y$ compatible with $K_X$ and $K_Y$ exists, then the condition $K_X\geq K_Y$ holds;
    \item[(ii)] If a map $f\colon X\to Y$ compatible with $K_X$ and $K_Y$ exists, then it is unique, up to a homotopy;
    \item[(iii)] If $K_X$ is isomorphic to $K_Y$, then the map $f\colon X\to Y$ compatible with $K_X$ and $K_Y$ provides a homotopy equivalence between $X$ and $Y$.
\end{itemize}
\end{theorem}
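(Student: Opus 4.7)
The plan is to prove the three parts in sequence, each exploiting the pointwise condition $I(x)\subset I(f(x))$ and the affineness of the subspaces $M_i$.

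For part (i) I would simply unravel the definitions. Given $J\subset I$ with $\bigcap_{j\in J}L_j\neq\varnothing$, pick any $x$ in this intersection, so that $J\subset I(x)$. Compatibility of $f$ gives $J\subset I(x)\subset I(f(x))$, which means $f(x)\in\bigcap_{j\in J}M_j$. Hence $\bigcap_{j\in J}M_j\neq\varnothing$, establishing $K_X\geq K_Y$.

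For part (ii), given two compatible maps $f,g\colon X\to Y$, I would construct the linear homotopy
\[
H\colon X\times [0,1]\to M,\qquad H(x,t)=(1-t)f(x)+t\,g(x),
\]
in the ambient vector space $M\supset Y$. The key observation is that for every $x\in X$ the index set $I(x)$ is nonempty (since $X=\bigcup L_i$) and $f(x),g(x)\in\bigcap_{i\in I(x)}M_i$ by compatibility; because this intersection is an affine subspace, the entire segment joining $f(x)$ and $g(x)$ lies inside it, and in particular inside $Y$. Continuity of $H$ is immediate, so $f\simeq g$. As a byproduct each slice $H(\cdot,t)$ is itself compatible, a fact that will be convenient later.

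For part (iii), from $K_X\cong K_Y$ I read off both $K_X\geq K_Y$ and $K_Y\geq K_X$. I would invoke the existence lemma (announced in the introduction and proved later in the section via good triangulations) applied to $K_Y\geq K_X$ to obtain a compatible map $g\colon Y\to X$. The composition $g\circ f\colon X\to X$ is again compatible, since $I(x)\subset I(f(x))\subset I(g(f(x)))$, and $\mathrm{id}_X$ is trivially compatible; part (ii) then yields $g\circ f\simeq \mathrm{id}_X$, and symmetrically $f\circ g\simeq \mathrm{id}_Y$. Therefore $f$ is a homotopy equivalence.

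The main obstacle is really organizational rather than mathematical: part (iii) depends on producing a reverse-direction compatible map, which is the content of the existence lemma that has not yet been proved at this point of the exposition. The cleanest fix is to establish the existence lemma (via the good-triangulation argument sketched in Section~3 of the introduction) \emph{before} using it here, or to state part (iii) with the joint hypothesis that both $f\colon X\to Y$ and some $g\colon Y\to X$ are compatible and deduce the existence of such a $g$ afterwards. Once this is in place, every remaining step reduces to convexity of affine subspaces together with straightforward manipulations of the relation $I(x)\subset I(f(x))$.
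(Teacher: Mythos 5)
Your proof is correct and follows essentially the same route as the paper's: (i) by unwinding definitions, (ii) via the linear homotopy justified by convexity of $\bigcap_{i\in I(x)}M_i$, and (iii) by producing a compatible inverse $g$ and invoking (ii) twice. Your remark about part (iii) depending on the as-yet-unproved existence lemma is accurate and applies equally to the paper's own proof, which silently assumes the existence of a compatible map $g\colon Y\to X$; the paper defers that construction (good triangulations) to later in Section~3, so the logical structure is exactly the forward reference you identified.
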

\begin{proof} (i) Assume a map $f\colon X\to Y$ compatible with $K_X$ and $K_Y$ exists. Then $K_X\geq K_Y$. Indeed, if $L_{i_1}\cap\dots \cap L_{i_k}$ is not empty and contains a point $x$, then the set $M_{i_1}\cap \dots\cap M_{i_k}$ contains $f(x)$ and, in particular, is non-empty.

(ii) if $f,g$ are two maps from $X$ to $Y$ compatible with $K_X$ and $K_Y$, then for any $0\leq t \leq 1$ the map $t f+(1-t)g$ is also compatible with $K_X$ and $K_Y$. Indeed, for any $x\in X$, the set of points $y\in Y$ such that $I(x)\subset I(y)$ is convex.

(iii) Assume that $K_X$ and $K_Y$ are isomorphic and there are maps $f\colon X\to Y$ and $g\colon Y\to X$ compatible with $K_X$ and $K_Y$.

Then the map $g\circ f\colon X\to X$ is a homotopy equivalence. Indeed, the identity map $Id_X$ and the composition map $g\circ f$ are compatible with $K_X$ and hence are homotopy equivalent by (ii). Similarly, the composition map $f \circ g\colon Y \to Y$ is homotopic to the identity map $Id_Y$.
\end{proof}

To prove the existence of compatible maps we will need the following definitions.

\begin{definition}\label{goodtriangdef} A {\emph{good triangulation}} of the set $X=\cup_{i\in I}L_i$ is a triangulation such that the following condition holds: the set of vertices of a simplex $S$ in a good triangulation is totally ordered in the sense of Definition~\ref{def:I(x)}. In other words, there is an order of the set of vertices $\{v_{i_1},\ldots, v_{i_s}\}$ of $S$ such that
\[
I(v_{i_1}) \subset \ldots \subset I(v_{i_s}).
\]
\end{definition}

\begin{definition} Consider the following {\it natural stratification} of $X=\bigcup_{i\in I} L_i$ by open strata of different dimensions: we say that two points $x,y \in X$ belong to one stratum if 

$$x\geq y\quad  and \quad y\geq x,$$ 

or equivalently, $$I(x)=I(y).$$

The stratum containing the point $x$ is the intersection $L(x)$ of the subspaces $L_i$, for all $i\in I(x)$, with removed union of subspaces $L_i$, for all $i \notin I(x)$.
\end{definition}

\begin{definition} A stratum $U_1$ of the natural stratification of $X$ is {\it bigger} than a stratum $U_2$ of the same stratification ($U_1\geq U_2$) if the closure of $U_1$ contains $U_2$.
\end{definition}

It is easy to see that $U_1\geq U_2$ if and only if for any $x\in U_1$, $y\in U_2$ the relation $x\geq y$ holds.

\begin{definition} A stratum $U$ has rank $k$ if the longest possible chain of strictly decreasing strata
$$U=U_1>\dots>U_k$$ has length $k$.
\end{definition}

\begin{theorem} For any finite union $X=\bigcup L_i$ of affine subspaces $L_i$ in a linear space $L$, one can construct a good triangulation of $X$.
\end{theorem}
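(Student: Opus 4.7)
The plan is to take the barycentric-type subdivision of the cell decomposition of $L$ induced by the arrangement $\{L_i\}$. The arrangement decomposes $L$ into finitely many relatively open convex polyhedral cells: each cell $C$ is a connected component of a stratum of the natural stratification, carries a well-defined index set $I(C)=\{i\in I: C\subset L_i\}$, and the cells contained in some $L_i$ together cellulate $X$. The key combinatorial observation is that if $C_1\subset\overline{C_2}$, then the strict inclusion $I(C_1)\supsetneq I(C_2)$ holds; closure-containment of cells therefore strictly refines reverse inclusion of index sets.

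I would then construct the good triangulation by induction on the rank of cells. Assuming that the union $X_{\leq k}$ of cells of rank at most $k$ carries a good triangulation, for each rank-$(k{+}1)$ cell $C$ I pick an interior point $p_C\in C$ and take the simplicial cone of $p_C$ over the already constructed triangulation of the frontier $\overline{C}\setminus C\subseteq X_{\leq k}$. Convexity of $C$ ensures the cones lie in $\overline{C}$ and are compatible with the existing triangulation on their bases. Since $I(p_C)=I(C)$ is a proper subset of $I(v)$ for every vertex $v$ of the boundary triangulation, the chain condition on $I$-values extends to each new simplex: listing $p_C$ first, the index sets of the vertices are strictly increasing.

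The main obstacle is that the cell $C$ may be unbounded, in which case the cone from a single interior point $p_C$ does not fill $\overline{C}$. I plan to handle this using the tail cone description developed in Section~4: an unbounded relatively open convex polyhedron decomposes as $C=C_b+\tail(C)$ with $C_b$ bounded, so one first triangulates the bounded part $\overline{C_b}$ as above and then extends outward by attaching, to each outward-facing boundary simplex, an unbounded simplex with one or more vertices ``at infinity'' along chosen rays of a fixed simplicial subdivision of $\tail(C)$. For the extensions to glue across incident unbounded cells the tail-cone subdivisions must be chosen coherently, which I would achieve by subdividing the common recession fan of the whole arrangement once and for all, by a generic choice of auxiliary rays. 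Because translation inside a cell preserves its index set, every such vertex at infinity carries the value $I(C)$, so the chain property on $I$-values is preserved in every unbounded simplex as well.

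Finally I would verify that the resulting locally finite (possibly unbounded) simplicial complex has support $X$ and that within each of its simplices the vertex index sets are totally ordered by inclusion, as required by Definition~\ref{goodtriangdef}. This last verification is immediate from the construction: in any simplex the vertices belong to a chain of cells $C_1<\cdots<C_s$, whose index sets satisfy $I(C_1)\supsetneq\cdots\supsetneq I(C_s)$ by the initial combinatorial observation.
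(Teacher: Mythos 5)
Your overall strategy---inducting on the rank of cells of the natural stratification and extending a triangulation across each cell compatibly with its frontier---matches the paper's, and your combinatorial observation that closure-containment of cells strictly refines reverse inclusion of index sets is exactly the fact driving both arguments. Where you diverge is in trying to produce the good triangulation \emph{directly}, by coning each cell from an interior point, rather than first producing a merely stratification-compatible triangulation and then passing to its barycentric subdivision, which is what the paper does. For bounded cells your coning is clean: the cone apex has $I$-value strictly below that of every boundary vertex, so the chain condition holds simplex by simplex.

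The gap is the unbounded case, which you correctly flag but do not actually resolve. Attaching ``unbounded simplices with vertices at infinity'' along rays of $\tail(C)$ is not a construction of a simplicial complex on $X$: an unbounded simplex is not a simplex, a vertex at infinity is not a point of $X$, and $I$ is simply not defined there, so the chain condition you claim to preserve at such vertices has no content. The paper sidesteps this entirely with a barycentric subdivision: it first builds \emph{any} locally finite triangulation compatible with the natural stratification (a standard PL fact, valid for unbounded strata, and one in which new vertices are not required to carry any particular $I$-value), and then observes that a vertex of the barycentric subdivision is the barycenter of a face lying inside a single stratum, so the vertices of each subdivided simplex correspond to a nested chain of faces, hence to a chain of strata, hence to a chain of $I$-values. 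That subdivision trick is precisely the idea your proposal is missing. A direct repair of your construction is possible: replace ``cone from one interior point'' by ``any locally finite triangulation of $\overline C$ extending the frontier triangulation whose new vertices all lie in $C$''---those new vertices share the common value $I(C)$, so total order is still automatic---but the existence of such an extension for an unbounded cell is again the same standard PL fact that the paper invokes, not the ad hoc tail-cone device you sketch.
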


\begin{proof} We construct a good triangulation for $X$ in two steps. First, we construct a triangulation compatible with the natural stratification of $X$, i.e. a triangulation such that any open simplex is contained in a certain open stratum. 

A triangulation compatible with the natural stratification of $X$ can be constructed inductively by first triangulating all strata of rank one (i.e. all closed strata) and then extending it to all strata of one higher at each step.

Then one can construct a good triangulation for $X$ by taking a barycentric subdivision of any triangulation of $X$ compatible with the standard (natural) stratification. Indeed, the set of vertices of each simplex in this subdivision corresponds to an increasing chain of faces of a simplex in the original triangulation, which are contained in an increasing chain of strata.
\end{proof}

\begin{theorem} If $K_X\geq  K_Y$, then there exists a map $f\colon X\to Y$ compatible with $K_X$ and $K_Y$.
\end{theorem}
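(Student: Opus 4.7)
The plan is to build $f$ vertex by vertex along a good triangulation of $X$ and then extend affinely over each simplex, using convexity of intersections of affine subspaces in the target.

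First, invoke the previous theorem to fix a good triangulation $T$ of $X$. For each vertex $v$ of $T$, note that $v\in \bigcap_{i\in I(v)} L_i$, so this intersection is non-empty. By the hypothesis $K_X\geq K_Y$, the intersection $\bigcap_{i\in I(v)} M_i$ is also non-empty, and I pick any point in it to serve as $f(v)$. This already arranges $I(f(v))\supseteq I(v)$ on the $0$-skeleton.

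Next, I extend $f$ to a simplex $S$ of $T$ with vertices $v_1,\dots,v_s$. Because $T$ is good, I can order the vertices so that $I(v_1)\subseteq I(v_2)\subseteq\dots\subseteq I(v_s)$. Consequently, for every $j$,
\[
f(v_j)\in \bigcap_{i\in I(v_j)} M_i \subseteq \bigcap_{i\in I(v_1)} M_i,
\]
so all the images $f(v_1),\dots,f(v_s)$ lie in the single affine subspace $\bigcap_{i\in I(v_1)} M_i$ of the ambient space of $Y$. I now define $f$ on $S$ by barycentric interpolation: $f\bigl(\sum \lambda_j v_j\bigr)=\sum \lambda_j f(v_j)$. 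Since affine subspaces are convex, this point still belongs to $\bigcap_{i\in I(v_1)} M_i$. These formulas agree on common faces of simplices (both sides are the affine extension determined by the shared vertex values), so they glue to a well-defined continuous map $f\colon X\to Y$.

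Finally I check compatibility. A point $x$ lies in the relative interior of some face $S'$ of $T$ with vertices, say, $v_{j_1},\dots,v_{j_r}$ ordered with $I(v_{j_1})\subseteq\dots\subseteq I(v_{j_r})$. All barycentric coordinates of $x$ with respect to $S'$ are positive, so $x\in L_i$ iff every $v_{j_k}\in L_i$, which gives $I(x)=\bigcap_k I(v_{j_k})=I(v_{j_1})$. By the preceding paragraph, $f(x)\in \bigcap_{i\in I(v_{j_1})} M_i=\bigcap_{i\in I(x)} M_i$, so $I(f(x))\supseteq I(x)$ and $f$ is compatible with $K_X$ and $K_Y$. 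The only subtle step is the second one: it is precisely the monotone ordering of the index sets along the vertices of each simplex, guaranteed by the good triangulation, that ensures the affine interpolation lands inside the required intersection in $Y$; without this ordering the naive linear extension need not preserve compatibility.
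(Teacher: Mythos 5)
Your construction is essentially the paper's own proof: fix a good triangulation, use $K_X\geq K_Y$ to choose $f(v)\in\bigcap_{i\in I(v)}M_i$ at each vertex, extend affinely over simplices, and use the total ordering of the vertex index sets to see that the image of each simplex stays in the intersection $\bigcap_{i\in I(v_{\min})}M_i$ determined by the minimal vertex. Your compatibility check is in fact stated a bit more carefully than the paper's (which attributes $I(x)$ to the \emph{biggest} vertex of the carrier, whereas it is the smallest one that matters), so there is nothing to add.
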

\begin{proof} 
 First, let us consider a good triangulation $\tau$ of $X$. Then, for any vertex $v$ of $\tau$, let us define the value $f(v)$ to be any point in $Y$ such that $I(f(x))\supset I(x)$. Such a point always exists, since $K_X \geq K_Y$. Then we can extend the map $f$ linearly on each simplex of $\tau$.
 
The map $f$ constructed above is compatible with $K_X$ and $K_Y$. Indeed, for any point $x\in X$, there exists the smallest simplex $S$ of the good triangulation for $X$ such that $x \in S$. Among the vertices $V(S)$ of this simplex $S$, there exists the biggest vertex $v$. It is easy to see that $I(x)=I(v)$. Since $f(x)$ belongs to the linear combination of the points $f(v_i)$ with $v_i\in V(S)$, the inclusion $I(f(x))\supset I(x)$ holds.
\end{proof}

%Let $I$ be a finite set of indexes. Consider a set $\{L_i\}$  of affine hyperplanes of a linear space $L$ indexed by elements $i\in I$.  We will discuss the homotopy type of the union $X=\cup_{i\in I}L_i$.

% \begin{definition} The union $X$ of hyperplanes $L_i$ is {\emph{non-degenerate}} if the intersection of the linear spaces $\hat L_i$ parallel to the affine hyperplanes $L_i$ is equal to zero.
% \end{definition}

%  Then and if $\dim L=n$ the $L\setminus X$ is a disjoint union of possibly unbounded open convex polyhedra. If $X$ is non degenerate each unbounded component of its complement does non contain any real line.

% \begin{theorem} If $X$ is a non-degenerate union of affine hyperplanes, then $X$ is homotopy equivalent to a wedge of $(n-1)$-dimensional spheres, where $n=\dim L$.

% Each sphere in the wedge could be naturally identified with the boundary of the convex polyhedron, whose interior is a component of $L\setminus X$.
% \end{theorem}

% \begin{proof}

% \end{proof}
% {\bf SKETCH OF PROOF.} Consider a set $L\setminus A$ where $A$ is a finite set containing exactly one point in any connected bounded component of $L\setminus X$.

% One can prove by hands a homopoty equivalence between $L\setminus X$ and $L\setminus A$.

% One can see that the set $L\setminus A$ is homotopy equivalent to a bucket of spheres with needed  properties

% Let 
% \[
% X=\bigcup L_i\quad  and let \quad \hat L=\bigcap \hat L_i
% \]
% where $\hat L_i$ are the   linear spaces  parallel to the affine hyperplanes $L_i$.
% The following lemma is obvious.

\subsection{Barycentric subdivision and a covering of a simplicial complex}

We will need some general facts related to barycentric subdivisions of simplicial complexes.

Let $C'$ be the simplicial complex obtained by the barycentric subdivision of a given simplicial complex $C$. Each vertex of $C'$ is the barycenter of a certain simplex of $C$. A set of vertices of $C'$ belongs to one simplex of $C'$ if and only if the simplices of $C$ corresponding to these vertices are totally ordered with respect to inclusion.

To each vertex $v$ of $C$ let us associate the closed subset $X_v$ of the complex $C'$ equal to the union of all simplices of $C'$ containing the vertex $v$.

\begin{lemma}\label{lem:4} 1) The nerve of the covering of $C'$ by the collection of closed subsets $X_v$ corresponding to all vertices $v$ of $C$ coincides with the original complex $C$.

2) All sets $X_v$ and their nonempty intersections are homotopy equivalent to a point.
\end{lemma}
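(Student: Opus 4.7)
The plan rests on the standard identification of simplices of $C'$ with strict chains of simplices of $C$: the vertices of $C'$ are the barycenters $b(\sigma)$ of simplices $\sigma$ of $C$, and $\{b(\sigma_0),\ldots,b(\sigma_l)\}$ spans a simplex of $C'$ precisely when $\sigma_0\subsetneq\ldots\subsetneq\sigma_l$. Under this identification a vertex $v$ of $C$ coincides with the barycenter of the $0$-simplex $\{v\}$, and $X_v$ is the closed star of $v$ in $C'$.

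For part 1), I would verify both directions of
\[
X_{v_{i_1}}\cap\ldots\cap X_{v_{i_k}}\ne\varnothing \iff \sigma:=\{v_{i_1},\ldots,v_{i_k}\}\in C.
\]
The easy direction ($\Leftarrow$) is witnessed by $b(\sigma)$: for each $j$, the chain $\{v_{i_j}\}\subseteq\sigma$ yields a simplex of $C'$ that contains $v_{i_j}$ and passes through $b(\sigma)$, so $b(\sigma)\in X_{v_{i_j}}$. Conversely, given $p$ in the intersection, let $\tau$ be the unique open simplex of $C'$ containing $p$ in its relative interior, with associated chain $\sigma_0\subsetneq\ldots\subsetneq\sigma_l$. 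The condition $p\in X_{v_{i_j}}$ supplies a closed simplex $\tau_j\supseteq\tau$ of $C'$ having $v_{i_j}$ as a vertex, whose associated chain therefore extends that of $\tau$ by inserting $\{v_{i_j}\}$. Since $\{v_{i_j}\}$ is a $0$-simplex it can only enter a strict chain at the very bottom, forcing $v_{i_j}\in\sigma_0$. Hence $\sigma\subseteq\sigma_0$, so $\sigma\in C$. The main subtlety in the whole proof is precisely this observation about where $0$-simplices may sit in a strict chain of $C$.

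For part 2), the closed star $X_v$ is star-shaped with respect to $v$, because every simplex of $C'$ containing $v$ is convex, so the straight-line homotopy $(1-t)p+tv$ never leaves $X_v$. For a nonempty intersection $X_{v_{i_1}}\cap\ldots\cap X_{v_{i_k}}$, with $\sigma\in C$ as above, I would contract toward $b(\sigma)$ by the same straight-line recipe. Take $p$ in the intersection with minimal simplex $\tau$ corresponding to a chain $\sigma_0\subsetneq\ldots\subsetneq\sigma_l$ satisfying $\sigma\subseteq\sigma_0$ by the previous paragraph. The chain $\sigma\subseteq\sigma_0\subsetneq\ldots\subsetneq\sigma_l$ defines a simplex $\tilde\tau$ of $C'$ containing both $p$ and $b(\sigma)$, hence the segment joining them. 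For each $j$, prepending $\{v_{i_j}\}$ produces a simplex of $C'$ that contains $\tilde\tau$ and has $v_{i_j}$ as a vertex, so the whole segment lies in $X_{v_{i_j}}$. The straight-line homotopy therefore contracts the intersection onto $b(\sigma)$, establishing the asserted contractibility.
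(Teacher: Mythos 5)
Your proof is correct and takes essentially the same route as the paper: part 1) hinges on the barycenter $b(\sigma)$ lying in the intersection (and, conversely, a point of the intersection forcing $\sigma$ to be the bottom of a strict chain), and part 2) contracts the intersection to $b(\sigma)$, which the paper phrases as the intersection being a cone with apex $b(\sigma)$. Your version simply spells out the straight-line homotopy and the chain bookkeeping in more detail than the paper does.
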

\begin{proof} 1) By definition, the set of vertices $v$ of $C$ can by identified with the set of subsets $X_v$, which provides a covering of $C'$. If vertices $v_1, \dots,v_k$ belong to one simplex of $C$, then the sets $X_{v_1},\dots, X_{v_k}$ contain the
 barycenter of that simplex, and therefore, these sets have a nonempty intersection.
 
 Conversely a set $X_v$ intersects a simplex $\Delta$ of the complex $C$ only if $v$ is a vertex of $\Delta$. Thus, if the intersection $X_{v_1}\cap\dots\cap X_{v_k}$ is not empty, then $v_1,\dots, v_k$ belong to a simplex $\Delta$ of $C$.

2) Any nonempty intersection $X_{v_1}\cap\dots \cap X_{v_k}$ can be represented as a union of some simplices of $C'$ containing a common vertex, which is the barycenter of the simplex with the vertices $v_1,\dots,v_k$. Observe that such a union is a cone, hence it is homotopy equivalent to a point.
\end{proof}

\subsection{Maps $f\colon K_X'\to Y$ in the case $K_X\geq K_Y$}

A continuous map $f\colon K_X'\to Y$ is compatible with the natural coverings $$B K_X=\cup_{i \in I} X_{v_i}\quad  and \quad Y=\cup_{i\in I}M_i$$ if, for any $i\in I$, the inclusion $$f(X_{v_i})\subset M_i$$ holds.

Suppose that $K_X\geq K_Y$.
Let $K_X$ be the nerve of the natural covering of $$X=\cup_{i \in I}L_i.$$ The barycentric subdivision $K_X'$ of $K_X$ has its own natural covering by the sets $\hat L_i$ equal to the union of the simplices in $K_X'$, which contain the vertex $v_i$ corresponding to the space $L_i$. By Lemma~\ref{lem:4}, the nerve of this covering of $K_X'$ is isomorphic to $K_X$. Now, let us generalize the definition of a map between topological spaces compatible with their coverings. Suppose $I$ is a finite set of indices. Consider a set $\{X_i\}$ of closed subsets of $X$ indexed by elements $i\in I$.

\begin{definition} The nerve $K_X$ of the covering $X=\bigcup X_i$ is the simplicial complex whose set of vertices $V_X$ contains one vertex $v_i$ for each subset $X_i$ i.e. one vertex for each index $i\in I$. A set of vertices $v_{i_1},\dots,v_{i_k}$ defines a simplex in $K_X$ if and only if the intersection $X_{i_1}\cap \dots\cap X_{i_k}$ is not empty.
\end{definition}

The following Theorem can be proved exactly the same way as Theorem~\ref{thm1}.

\begin{theorem} 1) A map $f\colon K_X'\to Y$ compatible with $K_X$ and $K_Y $ exists if and  only if the condition $K_X\geq K_Y$ holds;

2) If a map compatible with $K_X$ and $K_Y$ exists, then it is unique, up to a homotopy.
\end{theorem}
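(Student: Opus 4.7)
The plan is to follow the proof of Theorem~\ref{thm1} step by step, replacing the role of the ``good triangulation'' with the canonical combinatorics of the barycentric subdivision itself, as packaged by Lemma~\ref{lem:4}. Vertices of $K_X'$ correspond to simplices $\sigma$ of $K_X$; setting $I(\sigma) := \{i : v_i \in \sigma\}$, each simplex of $K_X'$ arises from a strict chain $\sigma_1 \subsetneq \cdots \subsetneq \sigma_s$ in $K_X$, so its vertices carry a strictly increasing chain of index sets $I(\sigma_1) \subsetneq \cdots \subsetneq I(\sigma_s)$. This is precisely the ``good order'' of vertices needed to mimic the earlier argument.

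For the ``only if'' half of (1), I would argue: given a compatible $f$, if $\{v_{i_1}, \ldots, v_{i_k}\}$ spans a simplex of $K_X$, Lemma~\ref{lem:4} supplies a point $p \in X_{v_{i_1}} \cap \cdots \cap X_{v_{i_k}}$, and then $f(p) \in M_{i_1} \cap \cdots \cap M_{i_k}$ shows the latter is non-empty; hence $K_X \geq K_Y$. For the ``if'' half, assuming $K_X \geq K_Y$, I would define $f$ at a vertex $w_\sigma$ of $K_X'$ as an arbitrary point in the non-empty intersection $\bigcap_{i \in I(\sigma)} M_i$, and extend $f$ affinely over each simplex of $K_X'$. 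Compatibility $f(X_{v_i}) \subseteq M_i$ then follows from the observation that $X_{v_i}$ is the closed star of $v_i$ in $K_X'$: a point $x$ in the relative interior of a simplex of $K_X'$ with flag $\sigma_1 \subsetneq \cdots \subsetneq \sigma_s$ lies in $X_{v_i}$ precisely when $v_i \in \sigma_1$, and in that case $i \in I(\sigma_j)$ for every $j$, so each vertex value $f(w_{\sigma_j})$ lies in the affine subspace $M_i$, and therefore so does the affine combination $f(x)$.

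Part (2) is a verbatim translation of part (ii) of Theorem~\ref{thm1}: if $f, g\colon K_X' \to Y$ are both compatible, the straight-line homotopy $h_t = t f + (1-t)g$ remains compatible for every $t \in [0,1]$ because each $M_i$, being an affine subspace, is convex, so $h_t(x) \in M_i$ whenever $f(x), g(x) \in M_i$. The main subtlety, and the only real place where the proof differs from Theorem~\ref{thm1}, is identifying $X_{v_i}$ as the closed star of $v_i$ in $K_X'$ and checking that the affine-combination argument is consistent with this identification; once this is in place, the combinatorial bookkeeping supplied by the barycentric subdivision takes care of everything else.
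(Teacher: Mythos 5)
Your proof is correct and follows essentially the same route the paper intends: it transcribes the proof of Theorem~\ref{thm1} (and the separate existence argument via good triangulations) into the setting of $K_X'$, with the key — and right — observation that the barycentric subdivision is automatically a good triangulation because each simplex is indexed by a strictly increasing flag of simplices of $K_X$. The identification of $X_{v_i}$ as the closed star of $v_i$, the affine extension over the simplices of $K_X'$, and the convexity/affine-subspace argument for the straight-line homotopy all match what the paper's terse ``can be proved exactly the same way as Theorem~\ref{thm1}'' points to.
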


%%%%%%%%%%%%%%%%%%%%%%%%%%%%%%%%%%%%%%%%%%%%%%%%%%%%%%%%%%%%%%%%%
%%%%%%%%%%%%%%%%%%%%%%%%%%%%%%%%%%%%%%%%%%%%%%%%%%%%%%%%%%%%%%%%%

\section{Homotopy type of a union of an affine subspace arrangement}

In this section we study the homotopy type of a union of affine subspace arrangement. In particular, we show that a union of a collection of affine subspaces can have a homotopy type of any simplicial complex (Theorem~\ref{thm:nerve}), whereas a union of affine hyperplanes is always homotopic to a wedge of spheres (Theorem~\ref{thm:wedgespheres}).

Consider a finite set $\{A_i\}$ of affinely independent points in a real vector space $L$. Let $T\subset L$ be the simplex on the vertex set $\{A_i\}$. Alongside with each face $T_J$ of $T$ consider the affine hull $L_{T_J}$ of $T_J$. We obtain a collection of affine subspaces in $L$ corresponding to the faces $T_J$.

% [Is $T_i$ just the face opposite to the vertex $A_i$?]

% [$\Delta_i=T_i$? $L_{T_i}$ is the affine hull of $\{A_i\}$]

Recall that the subspace $A$ of a topological space $X$ is called a {\emph{strong deformation retract}} of $X$ if there is a homotopy $\pi(x,t)\colon X\times I \to X$ such that
\begin{itemize}
    \item[(i)] $\pi (x,0) = x$ for any $x\in X$;
    \item[(ii)] $\pi(x,1)\in A$ for any $x\in X$;
    \item[(iii)] $\pi(a,t)=a$ for any $a\in A$ and $t\in I$.
\end{itemize}
\begin{lemma}\label{lem:defret} The simplex $T$ is a strong deformation retract of the union of hyperplanes in $L$. Moreover, the deformation retraction $\pi\colon L\times I \to L$ can be chosen to preserve the covering of $L$ by affine subspaces $L_{T_i}$, i.e.
\[
\pi(x,t)\in L_{T_i} \text{ for every } x\in L_{T_i}, t\in I.
\]
\end{lemma}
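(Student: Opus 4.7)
I would construct $\pi$ explicitly using barycentric coordinates. Assume first that $\{A_0,\dots,A_n\}$ is an affine basis of $L$ (so $L = L_T$); the general case reduces to this after composing with any linear projection $L \to L_T$ that fixes $L_T$ pointwise, which automatically preserves each $L_{T_J} \subseteq L_T$. Every $x \in L$ then has unique barycentric coordinates $(t_0(x),\dots,t_n(x))$ with $\sum_i t_i(x) = 1$ and $x = \sum_i t_i(x)\,A_i$. In these coordinates $x \in T$ precisely when $t_i(x) \ge 0$ for all $i$, and $x \in L_{T_J}$ precisely when $t_i(x) = 0$ for every $i \notin J$.

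Next I would define a retraction $r \colon L \to T$ by componentwise truncation followed by renormalization,
\[
r(x) \;=\; \frac{1}{\sum_j \max(t_j(x),0)} \sum_i \max(t_i(x),0)\, A_i.
\]
The denominator is strictly positive, since $\sum_i t_i(x) = 1$ forces at least one $t_i(x)$ to be positive; hence $r$ is well defined and continuous, lands in $T$, and is the identity on $T$. The crucial feature of this choice is that $r$ preserves every face hull: if $x \in L_{T_J}$, then $t_i(x) = 0$ for each $i \notin J$, so $\max(t_i(x),0) = 0$ for those same $i$, and consequently $r(x)$ has barycentric coordinates supported on $J$, i.e.\ $r(x) \in L_{T_J}$ (in fact $r(x) \in T_J$).

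Setting $\pi(x,t) = (1-t)\,x + t\,r(x)$ then yields the desired homotopy. Condition (i) is immediate, (iii) follows from $r|_T = \mathrm{id}_T$, and (ii) from $r(L) \subseteq T$. Preservation of the covering is automatic by convexity: if $x \in L_{T_J}$, then both endpoints $x$ and $r(x)$ of the segment $\pi(x,\cdot)$ lie in the affine subspace $L_{T_J}$, hence so does the entire segment.

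I do not expect any serious obstacle. The only real point requiring care is that the retraction $r$ must be compatible with \emph{every} $L_{T_J}$ simultaneously, and this is precisely what the componentwise truncation $t_i \mapsto \max(t_i,0)$ guarantees; more naive candidates, such as orthogonal projection onto the nearest point of $T$, would destroy the barycentric support pattern and fail this compatibility.
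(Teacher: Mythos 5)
Your construction is essentially the paper's own proof: truncate the barycentric coordinates at zero to obtain a retraction $r\colon L\to T$ and use the straight-line homotopy $\pi(x,t)=(1-t)x+t\,r(x)$, with preservation of each $L_{T_J}$ following from convexity of the affine subspaces. The only difference is your explicit renormalization of the truncated coordinates (and the remark reducing to the affine-basis case), which is a harmless clarification of the same map rather than a different argument.
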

\begin{proof} Note that each point $x\in L$ is representable in a unique way as
$$
x=\sum \lambda_i A_i, \quad  \text{ where }\quad  \sum \lambda_i=1
$$
(the numbers $\lambda_i$ are the barycentric coordinates of $x$ with respect to the simplex $T$).

Consider the projection $p\colon L\to T$, which maps a point $x$ with the barycentric coordinates $\{\lambda_i\}$ to the point $p(x)$ whose $i$-th barycentric coordinate is equal to $\max (\lambda_i,0)$.

It is easy to see that the map $\pi(x,t)$ defined by 
$$
\pi(x,t)=(1-t)x+tp(x)
$$
satisfies the conditions of the Lemma.
\end{proof}

Let $\{T_i\}$ be an ordered collection of faces of the simplex $T$ of size $N$. Consider the following two sets equipped with the covering by $N$ closed convex sets:
\begin{itemize}
    \item the union $\bigcup_{i=1}^N T_i$, equipped with the covering by the faces $T_i$ from the set $\{T_i\}$;
    \item the union $\bigcup_{i=1}^N L_{T_i}$ of affine hulls $L_{T_i}$ of faces $T_i$, equipped with the covering by the spaces $L_{T_i}$.
\end{itemize}

\begin{theorem}\label{thm:homtype} The natural embedding $\bigcup T_i\to \bigcup L_{T_i}$ makes $\bigcup T_i$ a strong deformation retract of $\bigcup L_{T_i}$. Moreover, the deformation retraction can be chosen to preserve the covering of $\bigcup L_{T_i}$ by the affine spaces $L_{T_i}$
%Moreover, there is a projection $\pi\colon \bigcup L_{T_i}\to \bigcup T_i$ which respects the coverings of these spaces and a homotopy between the identity map and the projection which respects the coverings as well.
\end{theorem}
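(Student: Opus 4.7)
The plan is to obtain the desired deformation retraction of $\bigcup L_{T_i}$ onto $\bigcup T_i$ simply by restricting the ambient strong deformation retraction $\pi\colon L\times I\to L$ of $L$ onto $T$ produced by Lemma~\ref{lem:defret}. The key point that makes this restriction work is the covering-preservation clause in that lemma: since $\pi(x,t)\in L_{T_J}$ whenever $x\in L_{T_J}$, the image of $\bigcup L_{T_i}\times I$ under $\pi$ lies inside $\bigcup L_{T_i}$, so $\pi$ restricts to a continuous map $\bigcup L_{T_i}\times I\to \bigcup L_{T_i}$, and this restriction automatically preserves the covering by the subspaces $L_{T_i}$.

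It remains to check that the restricted retraction actually lands in $\bigcup T_i$ at $t=1$ and fixes $\bigcup T_i$ pointwise. For any $x\in L_{T_i}$, Lemma~\ref{lem:defret}(ii) gives $\pi(x,1)\in T$, while covering-preservation gives $\pi(x,1)\in L_{T_i}$. I would then invoke the standard identity
\[
T\cap L_{T_i}=T_i,
\]
which follows by writing points in barycentric coordinates with respect to $\{A_j\}$: the simplex $T$ is cut out by $\lambda_j\ge 0$, while $L_{T_i}$ is cut out by $\lambda_j=0$ for indices $j$ not among the vertices of $T_i$, and the intersection of these conditions is exactly $T_i$. Consequently $\pi(x,1)\in T_i\subset\bigcup T_i$. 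Conditions (i) and (iii) of a strong deformation retract are inherited verbatim from Lemma~\ref{lem:defret}: $\pi(x,0)=x$ holds on all of $L$, and any $a\in\bigcup T_i$ lies in $T$, where $\pi$ is stationary in $t$.

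I expect the main (and essentially only) obstacle to be a careful bookkeeping argument that $T\cap L_{T_i}=T_i$ for each face, since everything else is a direct transfer of properties from Lemma~\ref{lem:defret}. No new geometric construction is required beyond applying the restriction to the arrangement $\{L_{T_i}\}$.
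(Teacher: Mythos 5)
Your proof is correct and is exactly the paper's approach: the paper's proof is a one-liner that takes the restriction of the homotopy from Lemma~\ref{lem:defret} to $\bigcup L_{T_i}$, and you simply spell out the (routine but worth recording) verification that this restriction lands in $\bigcup L_{T_i}$ at each time $t$, that $T\cap L_{T_i}=T_i$ in barycentric coordinates so the endpoint lies in $\bigcup T_i$, and that the stationarity on $T$ is inherited.
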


%[If $\pi$ is the retraction, then the theorem just says $\bigcup T_i$ is a deformation retract of $\bigcup L_{T_i}$. Moreover, if the deformation homotopy preserves $\bigcup T_i$ pointwisely, the latter is a strong deformation retract of $\bigcup L_{T_i}$]

\begin{proof} 
Indeed, as the required projection and its homotopy one can take the restriction of the homotopy from Lemma~\ref{lem:defret} to the space $\bigcup L_{T_i}$.
\end{proof}

%%%%%%%%%%%%%%%%%%%%%%%%%%%%%%%%%%%%%%%%%%%%%%%%%%%%%%%%%%%%%%%%%%%%%%%%%%%%%%%%%%%%%%%%%%%%%%%%%%%%%%%%%%%%%%%

\subsection{Barycentric subdivision and the corresponding affine subspaces} Let $\Delta$ be a simplicial complex and let $\Delta'$ be its barycentric subdivision. In particular, any simplex $\Delta_i$ in $\Delta$ corresponds to a vertex $A_{\Delta_i}$ of $\Delta'$.

Consider a collection of affinely independent points in a vector space $L$ identified with the vertices of $\Delta'$. Then $\Delta'$ is naturally embedded into the simplex $T$ generated by this collection.

For a vertex $A_i$ of $\Delta$ let its {\emph{star}} $St(A_i)$ be the collection of simplices of $\Delta$ having $A_i$ as a vertex. Each star $St(A_i)$ determines a face $T_i$ of $T$ by taking the convex hull of vertices of $T$ which correspond to simplices in $St(A_i)$.

% \begin{definition}  To a vertex $A_i$ of $\Delta$ one associates the collection of vertices of $\Delta'$ each of which corresponds to a simplex of $\Delta$ containing the vertex $A_i$. The face $T_i\subset T$ corresponds to $A_i$ if the set of vertices of $T_i$ is the set which corresponds to $A_i$.
% \end{definition}

Let $X_\Delta$ be the union of all the faces $T_i\subset T$ corresponding to the vertices of $\Delta$. Then the space $X=X_\Delta$ has a natural covering by the faces $T_i$. On the other hand, let $Y$ be the union of affine hulls $L_{T_i}$ of the faces $T_i$ corresponding to the vertices of $\Delta$. Then the space $Y$ has a natural covering by the subspaces $L_{T_i}$.

The following statement is an immediate corollary of Theorem~\ref{thm:homtype}.
\begin{corollary} The subset $X\subset Y$ is a deformation retract of $Y$. Moreover, the deformation retraction respects the coverings of $X$ by $T_i$ and of $Y$ by $L_{T_i}$.
\end{corollary}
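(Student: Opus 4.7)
The plan is to recognize that this corollary is essentially a direct instantiation of Theorem~\ref{thm:homtype}, so the main task is to verify the hypotheses rather than to do new work. First I would observe that in the present setup the collection $\{T_i\}$ associated with the stars $St(A_i)$ is, by construction, an ordered finite collection of faces of a single ambient simplex $T$ (the simplex generated by the vertices of $\Delta'$ inside $L$). This is precisely the data to which Theorem~\ref{thm:homtype} applies: that theorem takes any ordered collection of faces $\{T_i\}$ of a simplex $T$ and produces a strong deformation retraction of $\bigcup L_{T_i}$ onto $\bigcup T_i$ that preserves the cover by the affine hulls $L_{T_i}$.

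Next, I would simply invoke Theorem~\ref{thm:homtype} verbatim: the natural inclusion $X = \bigcup T_i \hookrightarrow \bigcup L_{T_i} = Y$ exhibits $X$ as a strong deformation retract of $Y$, and the retracting homotopy $\pi \colon Y \times I \to Y$ inherited from the theorem preserves each $L_{T_i}$. Concretely, $\pi$ is built (as in the proof of Lemma~\ref{lem:defret}) from the barycentric projection $p \colon L \to T$, which truncates the negative barycentric coordinates to zero. Since each $T_i$ is a face of $T$, the point $p(x)$ lies in $T_i$ whenever $x \in L_{T_i}$, and the straight-line homotopy $\pi(x,t) = (1-t)x + tp(x)$ stays in $L_{T_i}$ throughout because $L_{T_i}$ is an affine subspace containing both $x$ and $p(x)$.

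Finally, I would spell out the compatibility with the coverings in the form required by the corollary: for every $i$, the homotopy carries $L_{T_i}$ into itself and carries it into $T_i$ at time $t=1$, so it simultaneously respects the cover of $Y$ by the $L_{T_i}$ and the cover of $X$ by the $T_i$. I do not anticipate any serious obstacle here, since all the geometric content has already been packaged in Theorem~\ref{thm:homtype}; the only thing to be careful about is to make sure that the ordering and indexing of the $T_i$ inherited from the stars $St(A_i)$ matches the hypothesis of the theorem, which it does by definition.
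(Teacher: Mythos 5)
Your proposal is correct and matches the paper's approach: the paper also treats this corollary as an immediate consequence of Theorem~\ref{thm:homtype}, where the stars $St(A_i)$ give the ordered collection of faces $T_i$ to which that theorem applies. The additional unpacking of the barycentric projection $p$ from Lemma~\ref{lem:defret} is accurate but not needed, since Theorem~\ref{thm:homtype} already guarantees that the retraction preserves the cover by the $L_{T_i}$.
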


\begin{theorem}\label{thm:nerve} The nerve of the covering of $X$ by $T_i$ can be naturally identified with the nerve of the covering of $Y$ by $L_{T_i}$.

Both of these nerves can be naturally identified with the original simplicial complex $\Delta$.
\end{theorem}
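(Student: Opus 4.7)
The plan is to identify each of the three simplicial complexes with the same combinatorial object: the collection of subsets $\{A_{i_1},\ldots,A_{i_k}\}$ of vertices of $\Delta$ which span a simplex of $\Delta$. Both coverings are indexed by the vertices of $\Delta$, so their nerves share the same vertex set, and it only remains to check that they share the same simplices.

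First I would analyze the covering of $X$ by the faces $T_i$. Since $T$ is a simplex and $T_i,T_j$ are faces of $T$, their intersection is the face spanned by the common vertices; in particular $T_{i_1}\cap\dots\cap T_{i_k}\neq\varnothing$ if and only if the faces $T_{i_1},\dots,T_{i_k}$ share a common vertex of $T$. A vertex of $T$ is of the form $A_\sigma$ for a simplex $\sigma\in\Delta$, and by construction $A_\sigma\in T_{i_j}$ exactly when $\sigma\in St(A_{i_j})$, i.e.\ when $A_{i_j}$ is a vertex of $\sigma$. Hence the faces have a common vertex of $T$ if and only if there exists a simplex $\sigma\in\Delta$ containing all of $A_{i_1},\dots,A_{i_k}$; equivalently, $\{A_{i_1},\dots,A_{i_k}\}$ spans a simplex of $\Delta$. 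This identifies the nerve of $\{T_i\}$ with $\Delta$.

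Next I would treat the covering of $Y$ by the affine hulls $L_{T_i}$. The key lemma to establish is the following: for any family of faces $T_{I_1},\dots,T_{I_k}$ of the simplex $T$, one has
\[
L_{T_{I_1}}\cap\dots\cap L_{T_{I_k}}\neq\varnothing \ \ \Longleftrightarrow\ \ I_1\cap\dots\cap I_k\neq\varnothing.
\]
The implication ``$\Leftarrow$'' is trivial: any common vertex lies in all the affine hulls. For ``$\Rightarrow$'' I would use the affine independence of the vertices of $T$: a point in $L_{T_{I_j}}$ has a unique expression as an affine combination of vertices of $T$, with support contained in $I_j$; a point in the intersection thus has support contained in $\bigcap_j I_j$, which therefore cannot be empty since the affine coefficients sum to $1$. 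Applying this lemma to $I_j=St(A_{i_j})$ gives exactly the same simplex criterion as before, identifying the nerve of $\{L_{T_i}\}$ with $\Delta$.

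Combining the two identifications gives the natural isomorphism between the two nerves, and shows that both are $\Delta$. The only non-formal point is the affine-independence lemma above; everything else is a direct unwinding of the definitions of the star and the barycentric subdivision. The lemma itself is a short consequence of the uniqueness of affine coordinates relative to an affinely independent set, so I expect no serious obstacle — the main care needed is to keep straight the two indexings (vertices of $\Delta$ on the one hand, and vertices of $T$, i.e.\ simplices of $\Delta$, on the other).
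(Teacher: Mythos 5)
Your proposal is correct, and the combinatorial unwinding in the first step (identifying the nerve of $\{T_i\}$ with $\Delta$ via ``the faces $T_{i_1},\dots,T_{i_k}$ share a vertex of $T$ iff some $\sigma\in\Delta$ contains all the $A_{i_j}$'') is exactly the observation the paper relies on. Where you diverge from the paper is in how you identify the nerve of the affine covering $\{L_{T_i}\}$: the paper reads this off from the preceding corollary, namely the covering-preserving deformation retraction $\pi\colon Y\to X$ from Theorem~\ref{thm:homtype} (so $T_{i_1}\cap\cdots\cap T_{i_k}\subset L_{T_{i_1}}\cap\cdots\cap L_{T_{i_k}}$ trivially, and the reverse nonemptiness follows by applying $\pi$ to a point of the intersection), whereas you prove the equivalence directly via your affine-independence lemma on supports of barycentric coordinates. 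Both arguments are sound; yours is self-contained and slightly more elementary, avoiding any appeal to the retraction machinery, while the paper's is shorter given that Theorem~\ref{thm:homtype} has already been established. In fact your lemma can be viewed as the algebraic shadow of the retraction: the projection from Lemma~\ref{lem:defret} preserves the $L_{T_i}$'s precisely because it only truncates negative barycentric coordinates, so the support of $p(x)$ is contained in the support of $x$.
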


We can consider the barycentric subdivision $\Delta'$ of $\Delta$ as a subcomplex of the complex of all faces of the simplex $T$.  
Denote by $Z$ the union of all simplices in $\Delta'$. The space $Z$ is equipped with the following covering: with every vertex $A_i$ of $\Delta$ one can associate the union $Z_i$ of all (closed) simplices, containing the vertex $A_i$. In other words, $Z_i$ is the union of all the faces of $T$ containing the vertex $A_i$ and belonging to the simplicial complex $\Delta'$.

Observe that under the embedding $Z\to X$, the sets $Z_i$ are identified with $T_i\cap Z$.

\begin{theorem} There exists a map $\pi\colon X \to Z$ such that the following conditions hold:

1) $\pi$ maps each simplex $T_i$ to the set $Z_i$;

2) $\pi$ maps each simplex from $Z$ to itself.
\end{theorem}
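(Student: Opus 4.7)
My plan is to exhibit $\pi$ as a piecewise-linear map determined by its values on the vertices of a suitable triangulation of $X$. First I would construct a good triangulation $\tau$ of $X$ (in the sense of Definition~\ref{goodtriangdef}) which contains $Z$ as a subcomplex. Such a $\tau$ exists because the simplicial structure of $Z$ is already compatible with the natural stratification of $X$: on the relative interior of any simplex $[A_{\sigma_{0}},\ldots,A_{\sigma_{k}}]$ of $Z$, the index set $I(x)=\bigcap_{\sigma\in S(x)}\sigma$ is constantly $\sigma_{0}$. One therefore first triangulates $X$ compatibly with both the natural stratification and with $Z$, then takes a barycentric subdivision to make the triangulation good while preserving (the barycentric subdivision of) $Z$ as a subcomplex.

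For each vertex $v$ of $\tau$ I set $\pi(v):=A_{\tau(v)}$, where $\tau(v):=\bigcap_{\sigma\in S(v)}\sigma$ is the simplex of $\Delta$ whose vertex set equals $I(v)$; by construction $A_{\tau(v)}$ is a vertex of $\Delta'\subset Z$. I then extend $\pi$ linearly on each simplex of $\tau$. Since $\tau$ is good, the vertices $v_{1}\leq\cdots\leq v_{s}$ of any simplex of $\tau$ satisfy $\tau(v_{1})\subseteq\cdots\subseteq\tau(v_{s})$, a chain in $\Delta$; the images $A_{\tau(v_{l})}$ therefore span a (possibly degenerate) simplex of $\Delta'$, and the linear extension lands inside this simplex of $Z$, so $\pi$ indeed maps $X$ into $Z$.

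For condition (1), fix $x\in T_{i}$ lying in a simplex $[v_{1},\ldots,v_{s}]$ of $\tau$ with smallest vertex $v_{1}$; then $i\in I(x)=I(v_{1})$ forces $A_{i}\in\tau(v_{l})$ for all $l$. The extended chain $\{A_{i}\}\subseteq\tau(v_{1})\subseteq\cdots\subseteq\tau(v_{s})$ defines a simplex of $\Delta'$ containing $A_{i}$, hence lying in $Z_{i}$, and the image simplex $[A_{\tau(v_{1})},\ldots,A_{\tau(v_{s})}]$ is a face of it, giving $\pi(x)\in Z_{i}$. For condition (2), suppose $x$ lies in a simplex $S=[A_{\sigma_{0}},\ldots,A_{\sigma_{k}}]$ of $Z$; by the compatibility of $\tau$ with $Z$, the simplex of $\tau$ containing $x$ lies inside $S$, and every vertex $v$ of this small simplex has $S(v)$ a sub-chain of $\{\sigma_{0},\ldots,\sigma_{k}\}$, so $\tau(v)$ equals the minimum element of that sub-chain and $\pi(v)$ is a vertex of $S$; the linear extension then maps into the convex hull of vertices of $S$, which is $S$ itself.

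The main obstacle is arranging $\tau$ to be simultaneously good and to contain $Z$ as a subcomplex: goodness is what ensures that each simplex of $\tau$ is sent into a single simplex of $Z$, while the subcomplex property is what forces those target simplices to be faces of the particular simplex of $Z$ in which $x$ already lies. The two-step construction above (triangulate $X$ compatibly with the stratification and with $Z$, then barycentrically subdivide) resolves this, since barycentric subdivision is automatically good and preserves subcomplex structure.
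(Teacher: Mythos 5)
Your proof is correct and follows essentially the same route as the paper: both triangulate $X$ by the faces of $T$ (compatibly with the stratification by the $T_i$ and with $Z$ as a subcomplex), pass to the barycentric subdivision to get a good triangulation, send each vertex $v$ to the vertex $A_{\tau(v)}$ of $\Delta'$ labeling the stratum of $v$, extend linearly, and check the two conditions; your verification of (1) and (2) just spells out what the paper leaves as ``one can easily check.'' The only nitpick is that in the check of (1) the equality $I(x)=I(v_1)$ holds for the \emph{carrier} (the minimal simplex of $\tau$ containing $x$), so one should take that simplex, which does not affect the argument.
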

\begin{proof} The set $X$ is stratified by its covering $X=\cup T_i$ in the following way. 

Each stratum of this stratification is a nonempty intersection of a certain collection of the sets $T_i$ without all nonempty intersections of the bigger collections of sets $T_i$. In particular, this stratification also stratifies the set $Z\subset X$.

The set of all the strata of the above stratification can be naturally identified with the set of all simplices of $\Delta$. Indeed, the intersection $\cap T_{i_j}$ is nonempty if and only if there is a simplex in $\Delta$ with the vertices $A_{i_j}$.

In other words, the set of all the strata is in one-to-one correspondence with the set of vertices of $\Delta'$, i.e. with the set of vertices of~$T$.

The triangulation of $X$ by the faces of $T$ belonging to $X$ is compatible with the above stratification, i.e. each open simplex of this triangulation is contained in a certain stratum.

Consider the barycentric subdivision of the triangulation constructed above. Note that it provides a good triangulation for our stratification, i.e. each simplex from this triangulation is compatible in the following sense: if two strata contain two vertices of a simplex of the triangulation, then one of the strata belongs to the closure of another.

Now we are ready to define a map $\pi$. The map $\pi$ is a map from $X$ to $Z$ which is linear on each simplex of the barycentric subdivision of the natural triangulation of the space $X$, which maps each vertex $A$ of the triangulation to the vertex of $\Delta'$ corresponding to the stratum, containing the vertex $A$.

One can easily check that the map we just constructed satisfies all conditions of the Theorem, which finishes the proof.
\end{proof}

\begin{theorem} The map $\pi\colon X\to Z\subset X$ is homotopic to the identity map.

Denote by $\tilde \pi$ the restriction of $\pi$ to $Z$. Then $\tilde \pi $ maps $Z$ to itself and this map is homotopic to the identity map.
\end{theorem}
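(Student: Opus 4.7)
The plan is to use a straight-line homotopy. Define $H\colon X \times [0,1] \to L$ by
\[
H(x, t) := (1-t)\, x + t\, \pi(x).
\]
The main thing to check is that $H$ in fact takes values in $X$. Given $x \in X$, suppose $x \in T_i$ for some $i$; then by property (1) of $\pi$, we have $\pi(x) \in Z_i = T_i \cap Z \subset T_i$, and since $T_i$ is a face of the ambient simplex $T$ and hence convex, the whole segment $\{H(x,t) : t \in [0,1]\}$ stays in $T_i \subset X$. Running this observation over every $T_i$ containing $x$ shows that $H$ is a continuous map $X \times [0,1] \to X$ with $H(\cdot,0) = \mathrm{id}_X$ and $H(\cdot,1) = \pi$ (viewing $Z \subset X$). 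This proves the first claim.

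For the second claim, first note that $\tilde \pi$ really does take $Z$ into $Z$: by property (2), each simplex $\sigma$ of $Z$ satisfies $\pi(\sigma) \subset \sigma$, and $Z$ is the union of such simplices. Next I claim that the restriction of $H$ to $Z \times [0,1]$ stays inside $Z$. Given $z \in Z$, pick a simplex $\sigma$ of $\Delta'$ containing $z$; property (2) gives $\pi(z) \in \sigma$, and convexity of $\sigma$ implies that the segment from $z$ to $\pi(z)$ lies in $\sigma \subset Z$. Hence $H|_{Z \times [0,1]}$ provides the required homotopy from $\mathrm{id}_Z$ to $\tilde \pi$.

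There is no serious obstacle in this argument: it is essentially the convex-combination trick used in the proof of Theorem~\ref{thm1}(ii), applied to the convex covering sets $T_i$ (respectively to the simplices of $\Delta'$). The only thing one needs to track is that each point $x$ and its image $\pi(x)$ are guaranteed to lie in a common convex set of the relevant covering, which is precisely what properties (1) and (2) of $\pi$ were designed to ensure. Continuity of $H$ follows from continuity of $\pi$ (which is piecewise linear on a fixed triangulation of $X$) and the continuity of scalar multiplication and addition in $L$.
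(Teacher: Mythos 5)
Your proof is correct and takes essentially the same approach as the paper: a straight-line homotopy $(1-t)x + t\pi(x)$, justified by properties (1) and (2) of $\pi$ together with convexity of the covering sets $T_i$ (respectively, the simplices of $\Delta'$). The only difference is that you spell out the convexity argument that the paper leaves implicit.
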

\begin{proof} Observe that if $x\in T_i\subseteq X$, then $\pi (x)$ also belongs to $T_i$, as well as the entire segment joining these two points, due to the definition of the map $\pi$. Therefore, one can define a linear homotopy $F(x,t)=(1-t)x+t\pi(x)$ between the identity map and the map $\pi$.

Furthermore, $\tilde \pi$ maps each simplex of $\Delta'$ to itself. Hence one can define a linear homotopy $G(x,t)=(1-t)x+t \tilde \pi(x)$ between the identity map and the map $\pi$.
\end{proof}

%%%%%%%%%%%%%%%%%%%%%%%%%%%%%%%%%%%%%%%%%%%%%%%%%%%%%%%%%%%%%%%%%%%%%%%%%%%%%%%%%%%%%%%%%%%%%%%%%%%%%%%%%%%%%%%%%%%%%%%%%%

\subsection{Homotopy type of a union of a hyperplane arrangement}
Let $\mathcal{H}=\{H_1,\ldots, H_s\}$ be a collection of affine hyperplanes in $L\simeq\R^n$ indexed by the set $[s]=\{1,\ldots, s\}$.

\begin{definition}
The {\emph{nerve}} $K_\mathcal{H}$ of $\Hm$ is the simplicial complex on $s$ vertices $v_1\ldots, v_s$ such that a set of vertices $v_{i_1},\ldots,v_{i_k}$ defines a simplex in $K_\mathcal{H}$ if and only if the intersection $H_{i_1}\cap\cdots\cap H_{i_k}$ is not empty.
\end{definition}

We will say that two hyperplane arrangements, $\Hm_1$ and $\Hm_2$ are {\emph{combinatorially equivalent}} if the corresponding nerves $K_{\Hm_1}, K_{\Hm_2}$ are isomorphic.

 \begin{theorem}
    Suppose $\Hm = \{H_1,\ldots, H_s\}, \Hm'=\{H_1',\ldots, H_s'\}$ are two combinatorially equivalent hyperplane arrangements, and let $X= \bigcup H_i$ and $Y= \bigcup H_i'$  be the corresponding unions of hyperplanes. Then there exists a canonical homotopy equivalence $f\colon X\to Y$.   
 \end{theorem}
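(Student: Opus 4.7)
The plan is to deduce the theorem directly from the machinery built up in Section~3. The hypothesis of combinatorial equivalence says precisely that the nerves $K_{\Hm}$ and $K_{\Hm'}$ are isomorphic as abstract simplicial complexes under the bijection $H_i\leftrightarrow H_i'$. Interpreting this via the domination relation, we get $K_X\geq K_Y$ and $K_Y\geq K_X$ simultaneously, so both sides of the hypothesis needed to apply Theorem~\ref{thm1}(iii) are in place.

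First I would invoke the existence theorem (the last theorem before Subsection~3.2) that $K_X\geq K_Y$ produces a continuous map $f\colon X\to Y$ compatible with the natural coverings, and symmetrically a map $g\colon Y\to X$ compatible with the coverings in the reverse direction. The construction there proceeds by choosing a good triangulation of $X$, sending each vertex $v$ to any point of $Y$ whose index set dominates $I(v)$, and extending affinely on each simplex; the choice is allowed because $K_X\geq K_Y$ guarantees the required intersection in $Y$ is non-empty.

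Next I would apply Theorem~\ref{thm1}(iii): since $K_X$ is isomorphic to $K_Y$ and we have a compatible map $f\colon X\to Y$, the map $f$ is a homotopy equivalence. In fact, the proof of Theorem~\ref{thm1}(iii) shows that $g\circ f$ and $f\circ g$ are each homotopic to the corresponding identity, using part~(ii): any two maps compatible with the same pair of nerves are homotopic via the straight-line homotopy, which stays compatible because the fibre $\{y\in Y : I(y)\supset I(x)\}$ is an intersection of affine subspaces and therefore convex.

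The canonicity assertion is exactly Theorem~\ref{thm1}(ii): the homotopy class of $f$ is independent of the choices (of good triangulation and of vertex values) made in its construction, because any two compatible maps $X\to Y$ are linked by a straight-line homotopy through compatible maps. There is no real obstacle here beyond recording the bookkeeping; the work was already done in Section~3, and the only feature special to hyperplanes (as opposed to general affine subspaces) is that hyperplane intersections automatically give convex, hence contractible, fibres for the compatibility condition, which keeps the linear homotopy argument valid.
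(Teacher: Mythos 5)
Your proof is correct and is essentially the paper's argument spelled out in full: the paper's one-line proof (``take any continuous map with $f(x)\in H_j'$ whenever $x\in H_j$'') is just the terse form of your chain of deductions from the existence theorem, Theorem~\ref{thm1}(iii), and Theorem~\ref{thm1}(ii). One small aside: the convexity of the fibres $\{y : I(y)\supset I(x)\}$ that you invoke for the straight-line homotopy is not special to hyperplanes---any intersection of affine subspaces is affine, hence convex---so the argument works verbatim for general affine subspace arrangements, exactly as Section~3 develops it.
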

\begin{proof}
 As the canonical homotopy equivalence $f\colon X\to Y$ one can take any continuous map such that
    \[
    f(x) \in H_j' \text{ for every } x\in H_j. \qedhere
    \]
\end{proof}

In particular, there is a canonical isomorphism between homology groups of combinatorially equivalent hyperplane arrangements:
    \[
    f_*\colon H_*(X) \to H_*(Y).
    \]
    
We will say that the collection of hyperplanes $\{H_1,\ldots, H_s\}$ is {\emph{non-degenerate}} if it is a non-degenerate collection of affine subspaces. That is, there is no proper linear subspace $L\subset \R^n$ parallel to all the hyperplanes $H_i$.

\begin{theorem}\label{thm:wedgespheres}
Let $\Hm$ be a non-degenerate arrangement of affine hyperplanes in $\R^n$. Then its union $X$ is homotopy equivalent to a wedge of $(n-1)$-dimensional spheres.

The number of spheres is equal to the number of bounded regions in $\R^n\setminus X$.    
\end{theorem}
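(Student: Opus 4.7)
The plan is to specialize the general complement-reduction result of this section to the case when the open convex bodies are the chambers of a non-degenerate hyperplane arrangement. Let $U = \R^n \setminus X$, and decompose it as a disjoint union $U = U_1 \sqcup \cdots \sqcup U_N$ of its chambers; each $U_i$ is a nonempty open convex polyhedron. The first step is to invoke the main result recalled in the introduction, namely
\[
\R^n \setminus U \;\simeq\; \R^n \setminus \bigcup_{i\,:\,\tail(U_i)\text{ is a vector space}} \bigl(a_i + \tail(U_i)\bigr),
\]
for any choice of interior points $a_i \in U_i$. It then remains to identify exactly which chambers contribute under the non-degeneracy hypothesis.

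For any chamber $U_i$ written as $\{x : \epsilon_j \ell_j(x) > \epsilon_j b_j,\ j = 1, \ldots, s\}$ with signs $\epsilon_j \in \{\pm 1\}$ and $H_j = \{\ell_j = b_j\}$, a direct computation shows that $\tail(U_i) = \{v : \epsilon_j \ell_j(v) \geq 0\ \text{for all }j\}$, whose maximal contained linear subspace is $\bigcap_j \widetilde H_j$ and is therefore independent of $i$. Non-degeneracy of $\Hm$ is exactly the statement that $\bigcap_j \widetilde H_j = \{0\}$, so every $\tail(U_i)$ is a pointed cone. A pointed cone is a vector space only when it equals $\{0\}$, and, by the first bulleted property of tail cones in the introduction, $\tail(U_i) = \{0\}$ if and only if $U_i$ is bounded. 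Consequently the chambers contributing to the right-hand side above are exactly the $k$ bounded chambers $\Delta_1, \ldots, \Delta_k$, each contributing the single point $a_j \in \mathrm{int}\,\Delta_j$.

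Combining the two steps yields $X \simeq \R^n \setminus \{a_1, \ldots, a_k\}$, and a standard deformation retraction onto small disjoint $(n-1)$-spheres around the $a_j$'s joined to a common base point by arcs identifies this complement with a wedge of $k$ copies of $S^{n-1}$. By construction $k$ equals the number of bounded regions of $\R^n \setminus X$, which is exactly the assertion of the theorem. The main obstacle is the first step: the complement-reduction result is not a formal consequence of the compatibility and good-triangulation machinery of Section~3 alone. I expect it to be proved by a chamber-by-chamber collapse, pushing each unbounded chamber $U_i$ with genuinely pointed tail cone outward along a ray inside $\tail(U_i)$ so that the open set $U_i$ is absorbed into $X = \partial U_i$; pointedness of $\tail(U_i)$ is precisely what ensures that such rays escape to infinity without re-entering the arrangement, so that the collapse is a well-defined homotopy equivalence onto the smaller complement.
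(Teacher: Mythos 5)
Your proposal is correct and takes essentially the same route as the paper, whose proof of this theorem consists precisely of invoking Theorem~\ref{thm:homgeneral} and Corollary~\ref{cor:homgeneral}; your computation that each chamber's tail cone is $\{v:\epsilon_j\ell_j(v)\geq 0\}$, whose maximal linear subspace $\bigcap_j\tilde H_j$ is $\{0\}$ by non-degeneracy, so that exactly the bounded chambers contribute a point each and $X\simeq \R^n\setminus\{a_1,\ldots,a_k\}$, is exactly the content of the remark following Corollary~\ref{cor:homgeneral}. One side remark: the paper establishes the complement-reduction theorem not by pushing unbounded chambers off to infinity along rays of the tail cone, as you guess, but by filling them in, i.e.\ showing that the closure of a convex set whose tail cone is not a vector space deformation retracts onto its boundary (via the diffeomorphism $(x,y)\mapsto(xy,y)$ that turns the closure into an epigraph $\{y\geq f(x)\}$); since that theorem is quoted rather than reproved in the paper's proof of the present statement, your treatment is on the same footing.
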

\begin{proof}
We will prove a more general result, see Theorem~\ref{thm:homgeneral} and Corollary~\ref{cor:homgeneral}.
\end{proof}

\begin{corollary}\label{cor:hom} Let $L\supset X=\bigcup L_i$ be a non-degenerate union of affine hyperplanes $L_i$. Then, if $n>1$, the group $H_{n-1}(X,\Z)$ is a free Abelian group generated by the cycles $\partial \Delta_j$,
where $\Delta_j$ is a closure of the bounded open polyhedron, which is a bounded component of $L\setminus X$. 

All other groups $H_i(X,\Z)$ for $i>0$ are equal to zero and $H_0(X,\Z)\cong \Z$.
\end{corollary}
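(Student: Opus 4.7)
The plan is to derive this corollary as a direct consequence of Theorem~\ref{thm:wedgespheres} together with an identification of the generators of top homology. First I would invoke Theorem~\ref{thm:wedgespheres}: since the hyperplane arrangement is non-degenerate, $X$ is homotopy equivalent to a wedge $\bigvee_{j} S^{n-1}$ where the index $j$ runs over the bounded components $\Delta_j$ of $L\setminus X$. For $n>1$ the cellular (or reduced) homology of such a wedge immediately yields $H_0(X,\Z)\cong \Z$, $H_i(X,\Z)=0$ for $0<i<n-1$, and $H_{n-1}(X,\Z)\cong \Z^{N}$ where $N$ is the number of bounded regions.

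Next I would verify that each $\partial\Delta_j$ is a genuine $(n-1)$-cycle supported in $X$. Since $\Delta_j$ is the closure of a bounded component of $L\setminus X$, it is a compact convex polytope whose boundary $\partial\Delta_j$ is a topological $(n-1)$-sphere lying entirely in $X$. Orienting each $\Delta_j$ consistently with the ambient orientation of $L\cong \R^n$, the boundary $\partial \Delta_j$ defines a class $[\partial\Delta_j]\in H_{n-1}(X,\Z)$.

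The main step is to show that the classes $\{[\partial\Delta_j]\}$ form a $\Z$-basis of $H_{n-1}(X,\Z)$. For linear independence I would use the winding number functional: for a fixed interior point $a_k\in \Delta_k\setminus\partial\Delta_k$, the map $y\mapsto (y-a_k)/\|y-a_k\|$ sends any cycle $\Gamma\subset X\subset L\setminus\{a_k\}$ to $S^{n-1}$, and its degree is a homotopy invariant of $\Gamma$ in $L\setminus\{a_k\}\supset X$, hence depends only on $[\Gamma]\in H_{n-1}(X,\Z)$. Evaluating on $[\partial \Delta_j]$ gives $\delta_{jk}$ (by the standard fact that the winding number of the boundary of a convex body around an interior point is $1$, and around an exterior point is $0$). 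Therefore a relation $\sum \lambda_j [\partial\Delta_j]=0$ forces every $\lambda_k=0$. For spanning, I simply match ranks: the number of generators equals $N$, which by Theorem~\ref{thm:wedgespheres} is exactly the rank of $H_{n-1}(X,\Z)$, so the $[\partial\Delta_j]$ generate a full-rank subgroup of a free abelian group; combined with the winding-number pairing, this subgroup is primitive and hence equal to $H_{n-1}(X,\Z)$.

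The only mildly delicate point is the last primitivity argument, since ``full rank'' alone is not sufficient to conclude that the $[\partial\Delta_j]$ form a $\Z$-basis rather than a finite-index subgroup. The winding-number pairing resolves this cleanly: it exhibits a homomorphism $H_{n-1}(X,\Z)\to \Z^{N}$ under which the family $\{[\partial\Delta_j]\}$ maps to the standard basis, so this pairing is surjective, its kernel is trivial by independence, and the classes $[\partial\Delta_j]$ therefore constitute a $\Z$-basis of $H_{n-1}(X,\Z)$.
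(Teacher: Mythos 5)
Your proposal is correct and follows essentially the same route as the paper: the paper deduces Corollary~\ref{cor:hom} directly from Theorem~\ref{thm:wedgespheres} (wedge of $(n-1)$-spheres indexed by bounded regions) and then merely asserts the winding-number description of the coefficients. Your winding-number pairing $H_{n-1}(X,\Z)\to\Z^N$, together with the rank count showing it is an isomorphism sending the classes $[\partial\Delta_j]$ to the standard basis, is a correct and welcome filling-in of the generator identification that the paper leaves implicit.
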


According to Corollary~\ref{cor:hom}, each cycle $\Gamma\in H_{n-1}(X,\Z)$ can be represented as a linear combination
$$
\Gamma=\sum \lambda_j \partial \Delta_j.
$$
Moreover, each coefficient $\lambda_j$ equals the winding number of the cycle $\Gamma$ around a point $a_j \in \Delta_j\setminus \partial \Delta_j$.

\begin{lemma} Suppose $L^0\subset L$ be such a linear space that
\[
L=L^0+\hat L,
\]
i.e. $L$ is a direct sum of $\hat L$ and $L^0$.
Let $L_i^0=L_i\cap L^0$ and $X^0=X\cap L^0=\cup L^0_i$.

Then $X^0$ is a union of a non-degenerate arrangement of the affine hyperplanes $L^0_i\subset L^0$. Moreover, $X=X^0\times \hat L$, thus $X$ is homotopy equivalent to a wedge of $(n-1-l)$-dimensional spheres, where $l=\dim \hat L$.
\end{lemma}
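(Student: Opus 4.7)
The plan is to reduce the Lemma to the non-degenerate case handled by Theorem~\ref{thm:wedgespheres}, exploiting a product decomposition forced by the hypothesis that $\hat L$ is a common parallel direction of every $L_i$. This hypothesis is implicit in the statement, and is precisely what is needed in order that $L_i^0 = L_i \cap L^0$ be a hyperplane in $L^0$ and that the induced arrangement $\{L_i^0\}$ be non-degenerate.

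First I would check that each $L_i^0$ is a genuine affine hyperplane in $L^0$: since $\hat L$ lies in the direction $\tilde L_i$, the affine subspace $L_i$ decomposes as $L_i^0 \oplus \hat L$ with respect to the splitting $L = L^0 \oplus \hat L$, and hence $L_i^0$ has codimension one in $L^0$. Next I would verify that the arrangement $\{L_i^0\}$ is non-degenerate in $L^0$: its common parallel subspace equals $\bigl(\bigcap_i \tilde L_i\bigr) \cap L^0 = \hat L \cap L^0 = \{0\}$, provided $\hat L$ has been chosen maximal among the common parallel directions of the $L_i$'s (which is the natural choice here, as observed at the end of the introduction). Taking unions in the factorization $L_i = L_i^0 \times \hat L$ then gives
\[
X = \bigcup_i L_i = \bigcup_i (L_i^0 \times \hat L) = \Bigl(\bigcup_i L_i^0\Bigr) \times \hat L = X^0 \times \hat L.
\]

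Finally, since $\hat L \cong \R^l$ is contractible, projection onto the first factor is a homotopy equivalence $X^0 \times \hat L \simeq X^0$. Applying Theorem~\ref{thm:wedgespheres} to the non-degenerate hyperplane arrangement $\{L_i^0\}$ inside $L^0 \cong \R^{n-l}$ shows that $X^0$, and therefore $X$, is homotopy equivalent to a wedge of $(n-l-1)$-dimensional spheres, with one sphere for each bounded region of $L^0 \setminus X^0$. The main (essentially the only) obstacle is pinning down the implicit assumption $\hat L \subseteq \bigcap_i \tilde L_i$; once that is fixed, the argument is just the product-space observation combined with the already-established non-degenerate case.
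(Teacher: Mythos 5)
Your argument is correct and is exactly the intended one: the paper states this lemma without proof, and the natural justification is precisely your product decomposition $L_i = L_i^0\oplus\hat L$, hence $X = X^0\times\hat L \simeq X^0$, followed by an application of Theorem~\ref{thm:wedgespheres} to the non-degenerate arrangement $\{L_i^0\}$ in $L^0\cong\R^{n-l}$. You also correctly pin down the implicit hypothesis, namely that $\hat L$ is the maximal common parallel subspace $\bigcap_i \tilde L_i$ of the hyperplanes (as indicated at the end of the introduction), which is what makes each $L_i^0$ a hyperplane in $L^0$ and the induced arrangement non-degenerate.
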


Now we are ready to prove Theorem~\ref{thm:wedgespheres}. Let $U\subset \R^n$ be a finite union of open convex bodies: $U=\bigcup U_i$. We are going to study the homotopy type of the set $\R^n\setminus U$. First, we need the following definition.

\begin{definition} The {\emph{tail cone}} of a convex body $U$ is a set of points $v\in \R^n$ such that for any $a\in U$ and $t\geq 0$, the inclusion $a+tv\in U$ holds.
\end{definition}

One can check that for any convex set $U\subset \R^n$ the set $\tail (U)$ satisfies the following conditions.
\begin{itemize}
    \item  The set $\tail (U)$ is a convex closed cone in $\R^n$. A convex set $U$ is bounded if and only if $\tail(U)$ is the origin $O\in \R^n$;
    \item  If $\tail(U)$ is a vector space $L$, then for any transversal space $L_1$ (i.e. for any $L_1$ such that $\R^n=L \oplus L_1$) the set $U$ is representable in the form $U = U_1 \oplus L$, where $U_1=U\cap L_1$ is a bounded convex set. That is, if $\tail(U)$ is a vector space, then one has: $U=U_1\oplus \tail(U)$, for a certain bounded convex set $U_1$.
\end{itemize}

If the set $\tail (U_i)$ is a linear space $L_i$, then alongside with $U_i$ we can also consider a shifted space $a_i + L_i\subset U_i$, where $a_i$ is an arbitrary point in $U_i$.

We will prove the following theorem.

\begin{theorem}\label{thm:homgeneral} The set $\R^n\setminus U$ is homotopy equivalent to the set $\R^n\setminus \bigcup \{a_i+L_i\}$, where the union is taken over all indices $i$ such that $\tail (U_i)$ is a vector space.
\end{theorem}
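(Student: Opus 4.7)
The plan is to prove Theorem~\ref{thm:homgeneral} by processing the bodies $U_{i}$ one at a time and verifying that two elementary operations preserve the homotopy type of the complement. Writing $U'=\bigcup_{j\neq i}U_{j}$, the two local claims to establish are:
\begin{itemize}
\item[(A)] If $\tail(U_{i})$ is not a vector space, then $\R^{n}\setminus(U'\cup U_{i})\simeq \R^{n}\setminus U'$.
\item[(B)] If $\tail(U_{i})=L_{i}$ is a vector space, then $\R^{n}\setminus(U'\cup U_{i})\simeq \R^{n}\setminus(U'\cup(a_{i}+L_{i}))$.
\end{itemize}
Iterating (A) over the bodies of the first kind and (B) over the bodies of the second kind converts $\bigcup U_{i}$ into $\bigcup\{a_{i}+L_{i}\}$ indexed by those $i$ for which $\tail(U_{i})$ is a vector space, yielding the theorem. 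Since in both cases the modified set is contained in $U$, the natural direction of comparison is the inclusion of complements, and the goal in each step is to show this inclusion is a homotopy equivalence by constructing a deformation retraction of the larger complement onto the smaller one.

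For (A), I would exploit the asymmetry of $\tail(U_{i})$: pick $v_{0}\in\tail(U_{i})$ with $-v_{0}\notin\tail(U_{i})$. The tail-cone property gives $U_{i}+sv_{0}\subseteq U_{i}$ for every $s\geq 0$, and the condition $-v_{0}\notin\tail(U_{i})$ forces $\bigcap_{s\geq 0}(U_{i}+sv_{0})=\varnothing$ (otherwise the ray $x-tv_{0}$ would stay in $U_{i}$, contradicting $-v_{0}\notin\tail(U_{i})$). The required deformation retraction of $\R^{n}\setminus U'$ onto $\R^{n}\setminus(U'\cup U_{i})$ is obtained by pushing each point $x\in U_{i}\cap(\R^{n}\setminus U')$ in direction $-v_{0}$ until it leaves $U_{i}$, which happens in finite time by the previous observation. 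For (B), I would use the splitting $U_{i}=U_{i}'\oplus L_{i}$ with $U_{i}'$ a bounded convex slice in a transversal subspace $T_{i}$, and construct a fiberwise radial deformation retraction of $\R^{n}\setminus(U'\cup(a_{i}+L_{i}))$ onto $\R^{n}\setminus(U'\cup U_{i})$ by pushing each point of $U_{i}\setminus(a_{i}+L_{i})$ outward along the ray from $a_{i}+L_{i}$ inside its $T_{i}$-fiber until it reaches $\partial U_{i}$.

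The main obstacle in both steps is that the straight-line retraction constructed inside $U_{i}$ may cross one of the remaining bodies $U_{j}$ with $j\neq i$, and thus leave $\R^{n}\setminus U'$. I would address this by modulating the retraction speed via a smooth cutoff that slows down (and eventually halts) the motion near the boundary $\partial U_{j}$ of every $U_{j}$; because the $U_{j}$ are open and convex and the retraction is localized inside $U_{i}$, such a cutoff yields a globally defined deformation retraction whose image avoids $U'$. Step (A) is expected to be the more delicate, since the exit direction $-v_{0}$ is fixed and may systematically push points into some $U_{j}$; one must then argue that obstructed points still admit a homotopically equivalent exit through $\partial U_{j}$ (which lies already in $\R^{n}\setminus U'$), perhaps by combining the $-v_{0}$-flow with the radial retractions toward the other obstructing bodies, justified by the convexity of each $U_{j}$.
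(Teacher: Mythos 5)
Your inductive plan and the two local claims (A), (B) match in spirit what the paper's lemmas are setting up (the paper supplies the local retractions and leaves the gluing implicit). The right way to glue them is to observe that each local retraction can be built as a \emph{strong} deformation retraction that is the identity outside $\overline{U_i}$ and keeps $\overline{U_i}$ inside itself; restricting such a retraction to $\R^n\setminus U'$ does the job \emph{provided} $\overline{U_i}\cap U_j=\varnothing$ for $j\neq i$, which holds exactly when the $U_i$ are pairwise disjoint as open sets. This is the case in the intended application (the $U_i$ are the connected components of the complement of the hyperplane arrangement), and without it the theorem is simply false: take $U_1=\{x>0\}\subset\R^2$ and $U_2$ a small disk contained in $U_1$; then $\R^2\setminus U$ is a closed half-plane (contractible), while $\R^2\setminus\{a_2\}\simeq S^1$. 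So the obstacle you raise --- the retraction inside $U_i$ crossing some $U_j$ --- does not arise in the relevant setting, and the ``cutoff'' fix you propose is in any case incoherent: if the flow halts before a point leaves $U_i$, the end map does not land in $\R^n\setminus(U'\cup U_i)$, so you do not get a retraction onto the target.

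The genuine gap is continuity of the flow in step (A). The exit-time map $F(x,t)=x-t\,\tau(x)v_0$ is generally \emph{discontinuous} at boundary points of $U_i$ where $\partial U_i$ is parallel to $v_0$. Concretely, take $U_i=\{(\xi,\eta)\colon \xi>0,\ 0<\eta<1\}$ and $v_0=(1,0)$ (so $v_0\in\tail(U_i)$, $-v_0\notin\tail(U_i)$). Then $\tau(3,\eta)=3$ for every $\eta\in(0,1)$ but $\tau(3,1)=0$, so $F(\cdot,1)$ sends $(3,\eta)\mapsto(0,\eta)$ for $\eta<1$ yet fixes $(3,1)$; $F$ is not continuous there. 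This is precisely the difficulty the paper's construction is engineered to remove: after an affine change placing a supporting hyperplane at $\{y=1\}$ and the vector $v$ at $e_n$, the diffeomorphism $g(x,y)=(xy,y)$ carries $\overline{U_i}$ onto an epigraph $\{y\ge f(x)\}$ with $f$ continuous, where the vertical deformation retraction \emph{is} continuous; pulling back by $g^{-1}$ gives a continuous retraction of $\overline{U_i}$ onto $\partial\overline{U_i}$ along curved (not straight-line) paths. You need this, or some equivalent device, to complete (A). By contrast, your step (B), the fiberwise radial retraction using $U_i=U_i'\oplus L_i$, is fine and agrees with the paper.
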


Suppose that in the above theorem all the linear spaces $L_i$ are equal to the same linear space $L$. Denote by $T$ a transversal subspace to $L$, i.e. such a linear subspace in $\R^n$ that $\R^n=T\oplus L$.

\begin{corollary}\label{cor:homgeneral} Under the above assumptions, the set $\R^n\setminus U$ is homotopy equivalent to $T\setminus \{b_i\}$, where $b_i=T\cap \{a_i+L_i\}$.
\end{corollary}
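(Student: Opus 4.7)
The plan is to combine Theorem~\ref{thm:homgeneral} with the product structure induced by the direct sum decomposition $\R^n = T \oplus L$. First I would invoke Theorem~\ref{thm:homgeneral}: by hypothesis every $U_i$ with a linear tail cone has $\tail(U_i) = L$, so the theorem yields a homotopy equivalence
\[
\R^n \setminus U \;\simeq\; \R^n \setminus \bigcup (a_i + L),
\]
where the union runs over precisely those indices. This reduces the problem to analyzing the complement of a family of parallel affine subspaces all of direction~$L$.

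Next I would exploit the decomposition $\R^n = T \oplus L$ by writing each point uniquely as $x = t + \ell$ with $t \in T$, $\ell \in L$. The affine subspace $a_i + L$ consists exactly of those $x$ whose $T$-component equals the $T$-component of $a_i$; because the sum is direct, $T \cap (a_i + L)$ is the single point obtained by projecting $a_i$ onto $T$ along $L$, which is precisely the $b_i$ of the statement. Under the resulting product identification $\R^n \cong T \times L$ this gives
\[
\R^n \setminus \bigcup (a_i + L) \;=\; (T \setminus \{b_i\}) \times L.
\]
Since $L$ is a linear subspace and hence contractible, the straight-line homotopy $((t,\ell), s) \mapsto (t, (1-s)\ell)$ provides a strong deformation retraction of the right-hand side onto $(T \setminus \{b_i\}) \times \{0\} \cong T \setminus \{b_i\}$. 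Composing with the equivalence from Theorem~\ref{thm:homgeneral} finishes the argument.

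No serious obstacle is expected: the corollary is essentially a formal consequence of the preceding theorem together with the observation that one can factor out a contractible parallel direction from the complement. The only point to verify carefully is that the two descriptions of $b_i$—as $T \cap (a_i + L)$ and as the $T$-component of $a_i$ in the direct sum decomposition—coincide, which is immediate from $\R^n = T \oplus L$.
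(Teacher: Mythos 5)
Your argument is correct and is essentially the argument the paper intends (the corollary is stated there without a separate proof as an immediate consequence of Theorem~\ref{thm:homgeneral}): reduce via the theorem to the complement of the parallel affine subspaces $a_i+L$, identify this complement with $(T\setminus\{b_i\})\times L$ using $\R^n=T\oplus L$, and retract the contractible factor $L$. Your care in noting that the union runs only over indices with linear tail cone, and that $b_i$ is exactly the $T$-component of $a_i$, matches the paper's setup.
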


Note that Corollary~\ref{cor:homgeneral} totally describes the homotopy type of the set $\R^n\setminus \bigcup H_i$, where $\{H_i\}$ is any collection of affine hyperplanes in $\R^n$. Indeed, the complement $\R^n\setminus \bigcup H_i$ is a union of open convex sets. Moreover, the maximal linear subspaces contained in $\tail (U_i)$ are the same for each $U_i$: each of them is equal to the intersection of all the linear spaces $\tilde H_i$ parallel to the affine hyperplanes $H_i$.

To prove the theorem we will need some general facts about convex bodies.

\begin{lemma}
Suppose $U\subset \R^n$ is a bounded open convex set, $X$ is the closure of $U$, $\partial X$ is the boundary of $X$ (one has: $X=U\cup \partial X$), and let $a\in U$ be any point in $U$. Then $\partial X$ is a deformation retract of $X\setminus \{a\}$.
\end{lemma}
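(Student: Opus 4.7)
The plan is to construct the retraction explicitly via radial projection from $a$, and then use a straight-line homotopy along the rays from $a$. Since $U$ is open and $a\in U$, the point $a$ lies in the interior of $X$; and since $X$ is a bounded convex body in $\R^n$, every ray emanating from $a$ exits $X$ at a unique boundary point. This geometric picture is the engine of the argument.

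More precisely, I would define the radial function $\rho\colon S^{n-1}\to(0,\infty)$ by
\[
\rho(v)=\sup\{\,t\geq 0 \mid a+tv\in X\,\},
\]
and then set
\[
r\colon X\setminus\{a\}\to\partial X,\qquad r(x)=a+\rho\!\left(\tfrac{x-a}{\|x-a\|}\right)\cdot\tfrac{x-a}{\|x-a\|}.
\]
For $x\in\partial X$, convexity of $X$ and the fact that $a$ is interior force $\|x-a\|=\rho((x-a)/\|x-a\|)$, so $r$ fixes $\partial X$ pointwise. The candidate strong deformation retraction is the straight-line homotopy
\[
\pi\colon (X\setminus\{a\})\times I\to X\setminus\{a\},\qquad \pi(x,t)=(1-t)x+t\,r(x).
\]
Convexity of $X$ keeps $\pi(x,t)$ inside $X$. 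To see that $\pi(x,t)\neq a$, note that $x=a+sv$ and $r(x)=a+\rho(v)v$ with $v=(x-a)/\|x-a\|$ and $s=\|x-a\|\leq\rho(v)$, so $\pi(x,t)=a+[(1-t)s+t\rho(v)]v$ has coefficient of $v$ bounded below by $s>0$. The three defining properties of a strong deformation retract (starting at the identity, ending in $\partial X$, fixing $\partial X$) then follow immediately.

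The only real substantive point is the continuity of $r$, which reduces to continuity of $\rho$ on $S^{n-1}$. This is the standard fact that the radial function of a bounded convex body with the origin (here, $a$) in its interior is continuous: interiority of $a$ gives a uniform positive lower bound for $\rho$, boundedness of $X$ gives a uniform upper bound, and convexity upgrades these bounds to continuity via a squeeze argument on small cones around each direction. Once $\rho$ is known to be continuous, continuity of $r$ (and hence of $\pi$) on $X\setminus\{a\}$ is automatic because $x\mapsto(x-a)/\|x-a\|$ is continuous away from $a$.

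The main (and only) subtlety is therefore the continuity of the radial function $\rho$; the rest of the argument is formal once the radial retraction is in place. Boundedness of $U$ is used precisely at this step, and one sees why the hypothesis cannot be dropped: if $X$ were unbounded, $\rho$ could take the value $+\infty$ on some directions, the map $r$ would fail to be defined there, and the construction would break down.
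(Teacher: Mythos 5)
Your proof is correct and follows exactly the same route as the paper's: radial projection from $a$ onto $\partial X$, combined with the straight-line homotopy $\pi(x,t)=(1-t)x+t\,r(x)$. The paper simply asserts this in one line; you have supplied the routine verifications (well-definedness via the radial function $\rho$, continuity, and that the homotopy avoids $a$), all of which are sound.
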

\begin{proof} Let $\pi\colon X\setminus \{a\}\to \partial X$ be the projection of $X\setminus \{a\}$ to $\partial X$ from the point $a$. The following map provides a deformation retraction:
$$ 
F(x,t)=(1-t)x+t \pi(x),
$$
where $x\in X\setminus \{a\}$ and $0\leq t \leq 1$.
\end{proof}

% This homotopy has the following properties:

% 1) $\pi(x,0)$ is the identity map,

% 2) for every $0\leq t\leq 1$ and $x\in \partial X$, $\pi(x,t)=x$,

% 3) $\pi(x,1)=\pi(x)$.

% Thus $\pi(x,t)$ provides the needed homotopy.

\begin{corollary} Let $U\subset \R^n$ be an open convex set such that $\tail(U)$ is a vector space $L$. Then, by definition, for any $a\in U$ the shifted space $a+L$ belongs to $U$ and the set $X$ is homotopy equivalent to the set $X\setminus L$, where $X$ is the closure of $U$.
\end{corollary}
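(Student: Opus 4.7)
The plan is to reduce the claim to the previous lemma by exploiting the product decomposition $X = X_1 \oplus L$ afforded by the hypothesis that $\tail(U) = L$ is a vector space. Reading the $L$ appearing in $X \setminus L$ as the translate $a+L$ singled out just before (since $L$ itself is the parallel class of all such translates sitting inside $U$), the idea is to split off the tail direction and reduce everything to the bounded transversal slice $U_1$, where the previous lemma applies directly.

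First, I would invoke the second bullet preceding Theorem~\ref{thm:homgeneral} to choose a subspace $L_1 \subset \R^n$ transversal to $L$, so that $\R^n = L_1 \oplus L$; then $U = U_1 \oplus L$ where $U_1 := U \cap L_1$ is a bounded open convex subset of $L_1$, and passing to closures yields $X = X_1 \oplus L$ with $X_1 = \overline{U_1}$. Writing $a = a_1 + l_0$ with $a_1 \in U_1$ and $l_0 \in L$, the vector-space property of $L$ gives $a + L = a_1 + L$, which under the splitting is the fiber $\{a_1\} \oplus L$ inside $X$. In particular,
\[
X \setminus (a+L) \;=\; \bigl(X_1 \setminus \{a_1\}\bigr) \oplus L \text{.}
\]

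Next, I would apply the previous lemma to the bounded open convex set $U_1 \subset L_1$ at its interior point $a_1$: it furnishes a straight-line deformation retraction $F\colon (X_1 \setminus \{a_1\}) \times I \to X_1 \setminus \{a_1\}$ onto $\partial X_1$, given by interpolation toward the radial projection from $a_1$. Extending by the identity on the $L$-factor, via $\widetilde F\bigl((x,l),t\bigr) := \bigl(F(x,t),\,l\bigr)$, yields a deformation retraction of $X \setminus (a+L)$ onto $\partial X_1 \oplus L$. Since $L$ is a full vector subspace contributing no bounding direction, $\partial X_1 \oplus L$ coincides with the topological boundary $\partial X$ of $X$ in $\R^n$. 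Combined with the affine contraction of $X = X_1 \oplus L$ along the $L$-factor onto $X_1$, these two identifications pass through the common product model and transfer the homotopy type of $X$ to that of $X \setminus (a+L)$, establishing the claimed equivalence.

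The main obstacle will be verifying the continuity of the extended deformation retraction $\widetilde F$ across the transition where the interior radial projection meets $\partial X_1$, and confirming that the straight-line homotopy stays inside $X \setminus (a+L)$ at every intermediate $t \in [0,1]$; both facts follow from the convexity of $X_1$ together with the $L$-translation invariance of $X$, so the verification should be routine once the product decomposition is set up.
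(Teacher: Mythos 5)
Your product decomposition and the fiberwise use of the preceding lemma are exactly the right mechanism: writing $X = X_1\oplus L$ with $X_1=\overline{U_1}$ a bounded closed convex body, identifying $a+L$ with $\{a_1\}\oplus L$, and extending the radial retraction of $X_1\setminus\{a_1\}$ onto $\partial X_1$ by the identity on the $L$-factor does yield a deformation retraction of $X\setminus (a+L)$ onto $\partial X_1\oplus L=\partial X$. The paper gives no separate argument for this corollary, and this is clearly the intended derivation from the lemma, so up to that point your write-up supplies exactly the missing details.

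The genuine problem is your final step. Your two retractions give $X\simeq X_1$, which is contractible, and $X\setminus(a+L)\simeq \partial X_1\oplus L\simeq \partial X_1$, which is a sphere $S^{k-1}$ with $k=\dim L_1=n-\dim L$; these are not homotopy equivalent (already for $U$ the open unit disk in $\R^2$ and $L=\{0\}$: the closed disk is contractible, while the disk with its center removed is homotopy equivalent to $S^1$). Hence no passage ``through the common product model'' can transfer the homotopy type of $X$ to that of $X\setminus(a+L)$, and that sentence cannot be repaired: the literal claim $X\simeq X\setminus(a+L)$ is false. What your construction actually proves --- and what the corollary must mean, in view of the parallel corollary for tail cones that are not vector spaces and of the way both are used toward Theorem~\ref{thm:homgeneral} --- is that $\partial X$ (not $X$) is a deformation retract of $X\setminus(a+L)$; the printed statement contains a slip ($X$ for $\partial X$, and $L$ for $a+L$). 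State and prove that version, keep everything up to the identification $\partial X_1\oplus L=\partial X$, and delete the concluding ``transfer'' sentence.
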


We will need the following auxiliary lemma. Let us represent $\R^n$ as $\R^{n-1} \oplus \R^1$ and let us use accordingly the notation $(x,y)$ for points in $\R^n$, where $x\in \R^{n-1}$ and $y\in \R^1$.

Let $y=f(x)$ be a continuous function on $\R^{n-1}$. Denote by $X\subset \R^n$ the set of points $(x,y)$, where $y\geq f(x)$. Then $\partial X$ is the graph of the function $f$ (i.e. $(x,y)\in \partial X$ if and only if $y=f(x)$).

\begin{lemma} The natural projection $\pi\colon X\to \partial X$ mapping a point $(x,y)$ to $(x,f(x))$ is homotopic to the identity map.
\end{lemma}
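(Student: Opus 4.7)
The plan is to write down an explicit straight-line homotopy that slides each point of $X$ vertically down onto its image on the graph, viewing $\pi$ as a self-map of $X$ via the inclusion $\partial X\hookrightarrow X$. Specifically, I would define
$$F\colon X\times[0,1]\to\R^n,\quad F\bigl((x,y),t\bigr)=\bigl(x,\,(1-t)y+tf(x)\bigr).$$
Then $F(\cdot,0)=\mathrm{id}_X$ and $F(\cdot,1)=\pi$, and $F$ is continuous since $f$ is.

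The only thing to check is that the image of $F$ actually lies in $X$, i.e.\ that the second coordinate $(1-t)y+tf(x)$ is at least $f(x)$ for every $(x,y)\in X$ and every $t\in[0,1]$. This reduces to the inequality $(1-t)(y-f(x))\ge 0$, which is immediate from $y\ge f(x)$ and $1-t\ge 0$. Hence $F$ is a well-defined homotopy in $X$ between the identity and $\pi$.

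There is no serious obstacle here: the statement is essentially the observation that the epigraph $X$ is fiberwise convex in the $y$-direction, so the vertical segment from $(x,y)$ to $(x,f(x))$ stays inside $X$. This is the same mechanism used in the earlier linear-homotopy arguments of the section (for instance in the retraction $F(x,t)=(1-t)x+t\pi(x)$ onto the boundary of a bounded convex body), specialized to the unbounded epigraph of the continuous function $f$.
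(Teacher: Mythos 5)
Your homotopy $F\bigl((x,y),t\bigr)=\bigl(x,(1-t)y+tf(x)\bigr)$ is exactly the paper's linear homotopy $G(x,y,t)=(1-t)(x,y)+t\pi(x,y)$ written out in coordinates, so this is the same argument; you simply add the (correct and worthwhile) check that the intermediate points remain in the epigraph $X$.
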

\begin{proof}
One can consider the following homotopy: 
$G (x,y,t)=(1-t)(x,y)+t \pi(x,y)$.
\end{proof}

% This homotopy has the following properties:

% 1) $\pi(x,y,0)$ is the identity map;

% 2) for every $0\leq t\leq 1$ and $(x,y)\in \partial X$ $\pi(x,y,t)=(x,y)$;

% 3)  $\pi (x,y,1)=\pi(x,y)$.

% Thus $\pi(x,y,t)$ provides the needed homotopy.

Now, suppose that the set $\tail(U)\subset \R^n$ is not a vector space, i.e. assume that there is a vector $v\in \tail(U)$ such that the vector $-v$ does not belong to $\tail(U)$.

Let $a\in U$ be an arbitrary point. Since $-v$ is not in $\tail (U)$, there is a positive number $\tau$ such that $a-\tau v\in \partial X$. Let $\tilde L$ be the supporting hyperplane of $X$ at the point $a-\tau v$.

Let us make an affine change of variables in $\R^n$ in such a way that the hyperplane $\tilde L$ becomes the hyperplane $y=1$, the point $a-\tau v$ becomes the point $(0,1)$ and the vector $v$ becomes the standard basis vector in $\R^1$.

After this change of coordinates, $U$ becomes an open convex set in $\R^{n-1}\oplus \R^1$ such that $U$ belongs to the half space $y\geq 1$, and alongside with every point $a\in U$ our convex set $U$ contains the entire ray $a+\tau v$, where $\tau\geq 0$ and $v$ is the vector $(0,1)$.

Consider the diffeomorphism $g$ of the open half space $y>0$ to itself defined by the following formula: $$g(x,y)=(xy,y).$$

\begin{lemma} Under the diffeomorphism $g$, the closure $X$ of $U$ is mapped to the domain $Y$ defined by the following condition $(x,y)\in Y$ if and only if $y\geq f(x)$, where $f$ is a certain continuous function on $\R^n$.
\end{lemma}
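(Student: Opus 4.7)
The plan is to use the explicit inverse $g^{-1}(u,v) = (u/v,\,v)$ on the open half-space $\{y > 0\}$, under which
\[
(u, v)\in Y = g(X)\ \Longleftrightarrow\ (u/v,\,v)\in X,
\]
and to describe $Y$ fiber by fiber over $\R^{n-1}$. Because $X$ is closed and convex with $(0,1)\in\partial X\subset X$ and $(0,1)\in\tail(X)$, the entire vertical ray $\{(0,t):t\ge 1\}$ lies in $X$; this is the decisive structural input from the preceding setup.

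The first step is to establish the \emph{upward-ray property}: if $(u,v_1)\in Y$ and $v_2\ge v_1$, then $(u,v_2)\in Y$. This follows from the algebraic identity
\[
\Bigl(\tfrac{u}{v_2},\,v_2\Bigr)\;=\;\tfrac{v_1}{v_2}\Bigl(\tfrac{u}{v_1},\,v_1\Bigr)\;+\;\Bigl(1-\tfrac{v_1}{v_2}\Bigr)(0,\,v_1+v_2),
\]
which exhibits $(u/v_2,v_2)$ as a convex combination of $(u/v_1,v_1)\in X$ (by hypothesis) and $(0,v_1+v_2)\in X$ (by the tail-cone remark above); convexity of $X$ then yields $(u/v_2,v_2)\in X$, i.e.\ $(u,v_2)\in Y$. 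Setting $f(u):=\inf\{v:(u,v)\in Y\}$, this identifies $Y$ as the epigraph $\{(u,v):v\ge f(u)\}$ over the projection $\pi(Y)\subset\R^{n-1}$.

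Since $g$ is a homeomorphism of $\{y>0\}$ and $X$ is closed there, $Y$ is closed; hence the infimum is attained, $(u,f(u))\in Y$ for every $u\in\pi(Y)$, and $f$ is automatically lower semicontinuous (from $u_n\to u$, $f(u_n)\to L$ one infers $(u,L)\in Y$, so $f(u)\le L$). For upper semicontinuity at $u_0\in\mathrm{int}\,\pi(Y)$, given $\varepsilon>0$ I would prove that the point $(u_0,f(u_0)+\varepsilon)$ lies in $\mathrm{int}(Y)=g(U)$ by writing its $g^{-1}$-preimage, via the identity above with $v_1=f(u_0)$ and $v_2=f(u_0)+\varepsilon$, as a \emph{strict} convex combination of the boundary point $(u_0/f(u_0),f(u_0))\in X$ and a point in $U$ obtained from a vertical translate of the endpoint $(0,v_1+v_2)$. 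An open neighborhood of $(u_0,f(u_0)+\varepsilon)$ then lies in $Y$, so $f(u')\le f(u_0)+\varepsilon$ for all $u'$ near $u_0$; combined with lower semicontinuity this yields continuity of $f$.

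The main obstacle is the upper-semicontinuity step: the natural endpoint $(0,v_1+v_2)$ appearing in the convex-combination identity belongs to $X$ but may sit entirely on $\partial X$, so one has to substitute a nearby genuine interior point of $U$ while still arranging for the combination to produce the required preimage $(u_0/(f(u_0)+\varepsilon),f(u_0)+\varepsilon)$. Executing this perturbation carefully — using that $U$ is nonempty, open, and invariant under the tail-cone direction, $U+t(0,1)\subset U$ for $t\ge 0$ — is the technical heart of the argument.
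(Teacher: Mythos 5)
Your approach is genuinely different from the paper's. The paper analyzes $g$ on the boundary: it introduces $\tilde g\colon\partial X\to\R^{n-1}$, $(x,y)\mapsto xy$, restricts to the two-dimensional slices of $\partial X$ lying over lines through the origin in $\R^{n-1}$, shows monotonicity and properness of $\tilde g$ on each slice, and thereby proves $\tilde g$ is a homeomorphism; this identifies $g(\partial X)$ as the graph of a continuous $f$, from which the epigraph statement for $g(X)$ follows. You instead work directly with $Y=g(X)$ via the explicit inverse $g^{-1}(u,v)=(u/v,v)$, prove the upward-ray property by a clean convexity identity, and extract $f$ by taking fiberwise minima. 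Both routes are legitimate; yours avoids the slicing but shifts the work to semicontinuity.

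Two points need attention. First, the lemma asserts $f$ is a continuous function on all of $\R^{n-1}$, but your construction only defines $f$ on $\pi(Y)$, and you never prove $\pi(Y)=\R^{n-1}$. This requires (a short) argument: since $(0,2)\in U$ and $U$ is open, some ball $B\rleft((0,2),r\rright)$ lies in $U$; the tail-cone invariance $U+t(0,1)\subset U$ for $t\ge0$ pushes this to $B\rleft((0,t),r\rright)\subset U$ for all $t\ge2$; so for any $u\in\R^{n-1}$, choosing $v>\max(2,|u|/r)$ gives $(u/v,v)\in U\subset X$, hence $u\in\pi(Y)$. Without this, the statement of the lemma is not what you have proved.

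Second, the ``main obstacle'' in your last paragraph is not actually an obstacle. You only recorded that the closed ray $\{(0,t):t\ge1\}$ lies in $X$, but the setup gives the stronger fact that the \emph{open} ray $\{(0,t):t>1\}$ lies in $U$: the point $a=(0,1+\tau_0)$ (some $\tau_0>0$) is in the open set $U$, the ray $a+t(0,1)$ stays in $U$ for $t\ge0$, and the open segment from the boundary point $(0,1)$ to the interior point $a$ lies in $U$ by convexity of the open set $U$. Since in your identity $v_1+v_2=2f(u_0)+\varepsilon>2$ (as $f\ge1$ because $Y\subset\{v\ge1\}$), the endpoint $(0,v_1+v_2)$ is already an interior point of $U$, and the identity exhibits $(u_0/v_2,v_2)$ as a strict convex combination of a point of $X$ with a point of $U$, hence a point of $U$. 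No perturbation is needed, and the upper-semicontinuity step closes at once.
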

\begin{proof} First, let us consider the map $\tilde g\colon\partial X\to \R^{n-1}\oplus \{0\}$ given by the formula: $$ \tilde g(x,y)=(xy,0).$$ Let us show that $\tilde g$ is a homeomorphism between $\partial X$ and $\R^{n-1}$. For each vector $x\in \R^{n-1}\oplus \{0\}$, consider the set of points $\partial X_x\subset \partial X$ defined by the following condition: a point $(x_0,y_0)\in \partial X_x$ if and only if $x_0$ is proportional to $x$.  It is easy to see that the set $\partial X_x$ is homeomorphic to a line.

  We can parametrize it by an oriented distance from the point $(0,1)$ (which belongs to $\partial X_x$, for every $x$) along this curve with arbitrary chosen orientation.

  Now, the $\tilde g$ maps the curve $\partial X_x$ to the line of scalar multiples of $x$. Moreover, this map is monotonic and proper. Hence it provides a homeomorphism between $\partial X_x$ and the line $\tau x, \tau \in \R$. This argument implies that the map $\tilde g\colon\partial X\to \R^{n-1}$ is a homeomorphism.
  
  Observe that the image of $\partial X$ under the diffeomorphism $g\colon (x,y)\mapsto (xy,y)$ is a graph of the function $f$ such that the value $f(x)$ equals the coordinate $y$ of the point $(x,y):=\tilde g^{-1}(x)$. Then the set $X$ is mapped by this diffeomorphism to the domain in $\R^n$ consisting of the points $(x,y)$, where $y\geq f(x)$.
  \end{proof}

\begin{corollary}
Let $U\subset \R^n$ be an open convex set such that the cone $\tail(U)$ is not a vector space. Then the boundary $\partial X$ of the closure $X$ of $U$ is homotopy equivalent to $X$.
\end{corollary}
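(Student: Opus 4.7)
The plan is to combine the immediately preceding lemma with the earlier auxiliary lemma about epigraphs. The preceding lemma tells us that, after an appropriate affine change of coordinates, the diffeomorphism $g(x,y) = (xy, y)$ of the open half-space $\{y > 0\}$ onto itself carries $X$ onto a set of the form $Y = \{(x,y) : y \geq f(x)\}$ for some continuous function $f\colon \R^{n-1}\to \R$. In particular $g$ restricts to a homeomorphism $X \to Y$ which sends the topological boundary $\partial X$ onto $\partial Y$, the graph of $f$. This is exactly the step that uses the hypothesis that $\tail(U)$ is not a vector space: it is what produces the vector $v\in \tail(U)$ with $-v\notin \tail(U)$, the point $a-\tau v\in \partial X$, and the supporting hyperplane $\tilde L$ to $X$ at $a-\tau v$ that one normalizes to $\{y=1\}$.

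The auxiliary lemma about epigraphs then supplies the deformation retraction we need: for $Y$ as above, the natural projection $\pi\colon Y\to \partial Y$ given by $\pi(x,y) = (x, f(x))$ is homotopic to $\mathrm{id}_Y$ via the linear homotopy $G(x,y,t) = (1-t)(x,y) + t\pi(x,y)$. Hence the inclusion $\partial Y \hookrightarrow Y$ is a homotopy equivalence.

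Pulling this equivalence back through the homeomorphism $g$ gives that $\partial X \hookrightarrow X$ is a homotopy equivalence, which is exactly the claim of the corollary. Concretely, $g^{-1}\circ G \circ (g\times \mathrm{id}_{[0,1]})$ is the desired deformation retraction of $X$ onto $\partial X$.

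I do not anticipate any real obstacle: the conceptual work and the construction of $f$ have already been done in the preceding lemma, and all that remains here is to invoke the already-proved epigraph retraction and transport it back by the diffeomorphism $g$. The only minor point worth checking is that $G(\cdot,\cdot,t)$ stays inside $Y$ for all $t\in[0,1]$, but this is immediate since $Y$ is vertically convex in the $y$-coordinate for every fixed $x$ (being the epigraph of a function), so the segment from $(x,y)$ to $(x,f(x))$ lies in $Y$. One should also note that the whole argument takes place inside the open half-space $\{y>0\}$, which is the domain of $g$, and that this causes no issue because $X$ and the homotopy $G$ both lie in that half-space by construction.
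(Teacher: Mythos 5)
Your proof is correct and takes exactly the approach the paper intends: the corollary is an immediate consequence of the two preceding lemmas (the affine normalization plus the diffeomorphism $g$ carrying $X$ to an epigraph $Y=\{y\ge f(x)\}$, followed by the epigraph retraction), which is presumably why the paper gives no separate proof. Your additional observations — that $\partial X$ is sent to $\partial Y$, that $G$ stays inside $Y$ by vertical convexity, and that everything lives in $\{y>0\}$ where $g$ is a diffeomorphism — are exactly the right points to check and are all fine.
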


\section{Generalized virtual polytopes: definition and results}

Let $\Delta$ be a simplicial complex homeomorphic to the $(n-1)$-dimensional sphere. Denote by $V(\Delta)=\{v_1,\ldots,v_m\}$ the set of vertices of $\Delta$. In what follows, we will identify a simplex $S$ of $\Delta$ with the set of vertices $I\subset V(\Delta)$ which belong to $S$.

\begin{definition}
The map $\lambda\colon V(\Delta) \to (\R^n)^*$ is called a {\emph{characteristic map}} if for any vertices $v_{i_1},\ldots, v_{i_r}$ belonging to the same simplex of $\Delta$ the images $\lambda(v_{i_1}),\ldots, \lambda(v_{i_r})$ are independent. In particular, for any maximal simplex $\{v_{i_1},\ldots, v_{i_n}\}$ the images $\lambda(v_{i_1}),\ldots, \lambda(v_{i_n})$ form a basis of~$(\R^n)^*$.

The map $\lambda\colon V(\Delta) \to (\Z^n)^*$ is called an integer {\emph{characteristic map}} if for any maximal simplex $\{v_{i_1},\ldots, v_{i_n}\}$ of $\Delta$ the images $\lambda(v_{i_1}),\ldots, \lambda(v_{i_n})$ form a basis of the lattice $(\Z^n)^*$.
\end{definition}

Let us denote by $\ell_i$ the linear function $\lambda(v_i)$, for any $i\in[r]$. The characteristic map $\lambda$ defines an $m$-dimensional family of hyperplane arrangements $\Am$ in the following way. For any $h=(h_1,\ldots,h_m)\in \R^m$, the arrangement $\Am(h)$ is given as
\[
\Am(h) = \{H_1, \ldots, H_m\} \text{ with } H_i = \{\ell_i(x) = h_i\}.
\]
We denote by $X_h$ the union of all the hyperplanes from $\Am(h)$.

Given a subset $I\in [s]$, we will denote by $H_I$ the intersection 
\[
H_I = \bigcap_{j\in I} H_j.
\]
It follows from the definition of the characteristic map that $H_I$ is non-empty whenever the vertices $v_j$ with $j\in I$ belong to the same simplex.

Let $\Delta^\perp$ be the {\emph{dual polyhedral complex}} to $\Delta$: we define a correspondence between faces of $\Delta^\perp$ and the strata $H_I$ in the following way. A face $\Gamma_I$ of $\Delta^\perp$, dual to a simplex $I$ of $\Delta$, is associated to the stratum $H_I$.

\begin{definition}
We say that a map $f\colon\Delta^\perp \to X_h$ is {\emph{subordinate}} to a characteristic map $\lambda$, if for any face $\Gamma_I$ of $\Delta^\perp$ we have $f(\Gamma_I)\subset H_I$.
\end{definition}

% Let $X$ be any set, then the set of all maps $f\colon X\to \R^d$ has a structure of a convex cone. Indeed for two maps $f_1,f_2\colon X\to \R^d$ we define
% \[
% (\lambda_1f_1+\lambda_2f_2)\colon x\mapsto \lambda_1f_1(x)+\lambda_2f_2(x) \in\R^d,
% \]
% for any $x\in X$ and $\lambda_1, \lambda_2\in \R_{\geq 0}$.

\begin{theorem}\label{thm:submaps}
The space of maps $f\colon\Delta^\perp \to X_h$ subordinate to a characteristic map $\lambda$ is a non-empty convex set. In particular, any two such maps are homotopic.
\end{theorem}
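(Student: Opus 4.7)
The plan is to prove convexity first (which is nearly immediate) and then deduce both non-emptiness (by an inductive construction over the skeleton of $\Delta^\perp$) and the homotopy statement.

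\textbf{Convexity.} Suppose $f_1, f_2 \colon \Delta^\perp \to X_h$ are both subordinate to $\lambda$, and let $t\in[0,1]$. For any face $\Gamma_I$ of $\Delta^\perp$, we have $f_1(\Gamma_I), f_2(\Gamma_I)\subset H_I$. Since $H_I=\bigcap_{j\in I} H_j$ is an intersection of affine hyperplanes, it is an affine subspace, in particular convex, so $(tf_1 + (1-t)f_2)(\Gamma_I)\subset H_I$. Hence the set of subordinate maps is convex in the ambient vector space of continuous maps $\Delta^\perp\to\R^n$. Since convex sets are contractible, the homotopy assertion will follow immediately once non-emptiness is established: any two subordinate maps are joined by the straight-line homotopy $t\mapsto tf_1+(1-t)f_2$, each slice of which is again subordinate.

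\textbf{Non-emptiness: inductive extension along the skeleton.} I would build a subordinate map by induction on the dimension of the cells of $\Delta^\perp$, exploiting the order-reversing correspondence $I\leftrightarrow \Gamma_I$ between simplices $I\in\Delta$ and faces of $\Delta^\perp$. Vertices of $\Delta^\perp$ correspond to maximal simplices $I$ of $\Delta$; by the definition of the characteristic map, the $n$ covectors $\lambda(v_i)$ for $i\in I$ are linearly independent, so the affine subspace $H_I$ is a single point $p_I\in\R^n$, and we are forced to set $f(\Gamma_I)=p_I$. This defines $f$ on the $0$-skeleton of $\Delta^\perp$. Assume now $f$ has been defined on the $(k-1)$-skeleton so that $f(\Gamma_J)\subset H_J$ for every face $\Gamma_J$ of dimension $<k$. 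For a $k$-cell $\Gamma_I$ (with $|I|=n-k$), every face $\Gamma_J\subset \partial\Gamma_I$ satisfies $J\supsetneq I$, hence $H_J\subset H_I$, so $f(\partial\Gamma_I)\subset H_I$. Since $\Gamma_I$ is a convex polytope (in particular a ball) and $H_I$ is a convex affine subspace, we can extend $f|_{\partial\Gamma_I}$ to a continuous map $\Gamma_I\to H_I$; concretely, pick any point $q_I\in H_I$ and define $f$ on $\Gamma_I$ by coning $f|_{\partial\Gamma_I}$ off $q_I$, i.e.\ by setting
\[
f(x)=\bigl(1-\rho(x)\bigr)q_I+\rho(x)\,f(\pi(x))
\]
where $\pi$ is the radial projection from $q_I$ to $\partial\Gamma_I$ and $\rho(x)\in[0,1]$ measures the normalized radial position. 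Convexity of $H_I$ guarantees the image stays in $H_I$.

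\textbf{Obstacles and conclusion.} The only non-trivial point is verifying that the inductive extension is well defined; this reduces to the two facts already used, namely that each $\Gamma_I$ is homeomorphic to a closed ball (so it has the homotopy extension property, and any continuous map of its boundary into a contractible target extends) and that $H_J\subset H_I$ whenever $J\supset I$ (so the induction hypothesis gives a compatible boundary datum). Both are routine: the first is standard for polyhedral complexes dual to triangulated spheres, and the second is tautological from $H_I=\bigcap_{j\in I}H_j$. Having produced one subordinate map, convexity of the space of subordinate maps then yields both non-emptiness and the fact that any two subordinate maps are connected by a subordinate linear homotopy, completing the proof.
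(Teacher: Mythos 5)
Your convexity argument is identical to the paper's (both rest on the observation that each $H_I$ is an affine, hence convex, subspace), but your proof of non-emptiness takes a genuinely different route. The paper fixes an inner product on $\R^n$, defines a distinguished point $x_I\in H_I$ as the orthogonal projection of the origin onto $H_I$, sets $f_h(v_I)=x_I$ on the vertices of the barycentric subdivision $\Delta'$, and extends by linearity over each simplex; you instead build $f$ by a CW-theoretic induction over the skeleton of $\Delta^\perp$, coning the already-defined boundary map of each cell $\Gamma_I$ off an arbitrary point $q_I\in H_I$. Both constructions are correct, and yours is arguably more elementary (it needs no inner product and only uses that each $H_I$ is contractible and each cell is a ball). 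However, the paper's construction has a payoff that yours forgoes: because orthogonal projection to $H_I$ depends linearly on the level $h$, the paper's family of maps satisfies $f_{h+h'}=f_h+f_{h'}$ (Corollary~\ref{cor:linear}), and this linearity in $h$ is used immediately afterward to show that $\int_{\Delta^\perp}f_h^*\alpha$ is a polynomial in $h$. Your cell-by-cell extension, with an arbitrary choice of cone points $q_I$, gives no control over how $f_h$ varies in $h$, so while it proves Theorem~\ref{thm:submaps} as stated, it would need to be supplemented (e.g.\ by insisting on a canonical, $h$-linear choice of $q_I$, essentially recovering the paper's orthogonal projections) to support the subsequent development. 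One small notational slip: the radial projection $\pi$ should be taken from an interior basepoint of $\Gamma_I$ in the \emph{domain}, not from $q_I$, which lives in the target $H_I\subset\R^n$; the formula is otherwise fine.
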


% We will prove theorem in two steps. Let us first consider the maps $f:\Delta\to \R^n$ that map $\Gamma$ to the corresponding stratum $H_\Gamma$

% \begin{lemma}
% Such maps $f$ exist and they are all homotopic.
% \end{lemma}
\begin{proof}
First, let us show the second part of the statement, assuming that a map $f\colon \Delta^\perp\to X_h$ subordinate to the characteristic map $\lambda$ exists. Observe that $H_I$ is a convex set, for any $I\subset [m]$. Therefore, for any two maps $f, f'\colon \Delta^\perp\to X_h$ subordinate to the characteristic map $\lambda$, any member of the linear homotopy between them is also subordinate to this characteristic map:
$$
f_{t}:=(1-t)f+tf',\quad t\in [0,1].
$$
Thus the space of maps $f\colon \Delta^\perp\to X_h$ compatible with the coverings of $\Delta^{\perp}$ and $X_h$ is contractible (assuming it is non-empty).

To show the existence of such maps, we use the following construction. First, let us choose any inner product on $\R^n$. This defines a set of distinguished points $x_I\in H_I$ via taking orthogonal projection of the origin in $\R^n$ to an affine subspace $H_I$. On the other hand, the points of the polyhedral complex $\Delta^\perp$ dual to the simplicial complex $\Delta$, being the vertices of the barycentic subdivision $\Delta'$, are in a bijection with simplices of $\Delta$, hence are labeled by subsets $I\subset [m]$. 

We construct the map $f_h\colon \Delta^\perp\to X_h$ subordinate to the characteristic map $\lambda$ as follows. First, we define the images of the above mentioned points $v_I$ of the complex $\Delta^\perp$ by the formula
\[
f_h(v_I) = x_I,
\]
and then we extend this map by linearity. Note that the map $f_h$ we have just constructed is well-defined, since $(\Delta,\lambda)$ is a characteristic pair (indeed, $H_I$ is nonempty whenever $I$ corresponds to a simplex in $\Delta$) and is compatible with the covering of the complex $\Delta^{\perp}$ by the stars $St(v_i)$ in $\Delta'$ of the vertices $v_i\in\Delta$, by construction.
\end{proof}

The family of maps $f_h\colon\Delta^\perp\to X_h$ satisfies another nice property.
\begin{corollary}\label{cor:linear}
In the situation as before, one has $f_{h+h'}=f_h+f_{h'}$.
\end{corollary}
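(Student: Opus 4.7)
The plan is to track the explicit construction of $f_h$ from the proof of Theorem~\ref{thm:submaps} and verify linearity in $h$ step by step. Recall that $f_h$ is built by first choosing the distinguished points $x_I(h) \in H_I(h)$ as orthogonal projections of the origin of $\R^n$ onto the affine subspaces $H_I(h)$, setting $f_h(v_I) = x_I(h)$ at each vertex $v_I$ of the barycentric subdivision $\Delta'$, and then extending affinely on each simplex of $\Delta'$ using barycentric coordinates. Both of these operations are candidates to respect the additive structure, so the corollary should follow once linearity of $h \mapsto x_I(h)$ is established.

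First I would pin down the dependence of $x_I(h)$ on $h$. For a simplex $I = \{i_1,\dots,i_r\} \in \Delta$, the affine subspace $H_I(h)$ is the solution set of the linear system $\ell_{i_j}(x) = h_{i_j}$, $j=1,\dots,r$; by the characteristic condition, the linear functionals $\ell_{i_j}$ are linearly independent, so if $A_I$ denotes the matrix with rows $\ell_{i_j}$ and $h_I = (h_{i_1},\dots,h_{i_r})^T$, the orthogonal projection of the origin onto $H_I(h)$ is the unique minimum-norm solution, given by
\[
x_I(h) = A_I^T (A_I A_I^T)^{-1} h_I .
\]
This formula shows at once that $x_I \colon \R^m \to \R^n$ is $\R$-linear, so in particular $x_I(h+h') = x_I(h) + x_I(h')$.

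Next I would propagate this linearity through the affine extension. For any point $p \in \Delta^\perp$, choose a simplex of $\Delta'$ containing $p$ with vertices $v_{I_1},\dots,v_{I_k}$ and let $(\alpha_1,\dots,\alpha_k)$ be the barycentric coordinates of $p$. Since the barycentric coordinates depend only on $p$ (not on $h$), we have
\[
f_h(p) = \sum_{j=1}^k \alpha_j\, x_{I_j}(h), \qquad f_{h'}(p) = \sum_{j=1}^k \alpha_j\, x_{I_j}(h'),
\]
and by the linearity of each $x_{I_j}$,
\[
f_{h+h'}(p) = \sum_{j=1}^k \alpha_j\, x_{I_j}(h+h') = \sum_{j=1}^k \alpha_j\bigl(x_{I_j}(h) + x_{I_j}(h')\bigr) = f_h(p) + f_{h'}(p).
\]

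The only subtlety — and the main thing to double-check — is well-definedness on overlaps: a point $p$ may lie on several simplices of $\Delta'$, and one must confirm that the formula above gives the same answer on each, so that the pointwise sum $f_h + f_{h'}$ is genuinely the map constructed for the parameter $h+h'$. This is essentially automatic because the affine extension is the standard piecewise-affine interpolation of a vertex function and both sides of the identity agree on all vertices $v_I$ of $\Delta'$. I expect no serious obstacle beyond writing out the projection formula cleanly; the corollary is really just the observation that all ingredients in the construction (choice of $H_I$, orthogonal projection of the origin, barycentric interpolation) are $\R$-linear in the data $h$.
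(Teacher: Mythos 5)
Your proof is correct and follows essentially the same route as the paper's: both hinge on the observation that the distinguished points $x_I(h)$ depend linearly on $h$, from which linearity of $f_h$ in $h$ follows by the barycentric interpolation. The paper states the key linearity fact without spelling out the projection formula $x_I(h) = A_I^T(A_I A_I^T)^{-1} h_I$ or the propagation step, so your write-up is a fuller version of the same argument.
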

\begin{proof}
The statement follows from the fact that the distinguished points $x_I$ used in the above construction depend linearly on $h\in \R^n$:
\[
x_{I,h+h'}= x_{I,h}+x_{I,h'}. \qedhere
\]
\end{proof}

With every affine hyperplane arrangement $\Am(h)$ let us associate a chain $\Delta(h)=\sum_i W(U_i,f)U_i$, where $U_i$ are the connected components of the complement $\R^n\setminus \Am(h)$, and $f\colon\Delta^\perp\to \Am(h)$ is any map subordinate to a characteristic map $\Lambda$. Since any two such maps are homotopic, the chain $\Delta(h)$ is well-defined. 

\begin{definition}\label{def:genvir}
We will call the chain $\Delta(h)$ a {\emph{generalized virtual polytope}} associated to the simplicial complex $\Delta$, characteristic map $\Lambda$ and the vector $h\in\R^m$. We denote by $\Pm_{\Delta,\Lambda}\simeq \R^m$ the space of all generalized virtual polytopes associated to the simplicial complex $\Delta$ and characteristic map $\Lambda$.
\end{definition}

\begin{remark}
Classical virtual polytopes are piecewise linear functions defined not necessarily in the complements of unions of affine hyperplane arrangements (convex chains), hence they carry more information, than a chain $\Delta(h)$. However, in this paper we are interested only in the volumes of generalized virtual polytopes and integrals over them, so it is enough for us to work with the chain $\Delta(h)$. We will study other valuations on the space of generalized virtual polytopes in the future works.
\end{remark}

\subsection{Integration over generalized virtual polytopes}
Let $\alpha$ be an $(n-1)$-form on $\R^n$ given by the formula: 
\[
\alpha=P_1  \widehat {dx_1} \wedge \dots \wedge dx_n+ \dots + P_n  dx_1 \wedge \dots \wedge\ \widehat{dx_n}.
\]
Here, the symbol $\widehat {dx_i}$ means that the term $dx_i$ is missing. The following theorem is obvious.

\begin{theorem} If all the coefficients $P_i$ of the form $\alpha$ are homogeneous polynomials of degree $k$ (a polynomial of degree $\leq k$) on $\R^n$, then the function $\int_{\Delta^\perp} f^*\alpha$ is a homogeneous polynomial of degree $k+n-1$ (a polynomial of degree $\leq k+n-1$) on the space of mappings $f\colon\Delta^\perp\rightarrow  \bigcup_{\Am(h)} H_i$ subordinate to the corresponding characteristic map.
\end{theorem}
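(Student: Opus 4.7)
The plan is to parametrize the space of subordinate maps by $h\in\R^{m}$ using the canonical family $f_{h}$ built in the proof of Theorem~\ref{thm:submaps}, then use the piecewise-linear structure of $f_{h}$ to read off the degree in $h$. First I would verify that the integral $\int_{\Delta^{\perp}}f^{*}\alpha$ depends on the subordinate map $f$ only through the vector $h$. Since $\Delta^{\perp}$ is a closed $(n-1)$-manifold (it is the dual complex of a simplicial $(n-1)$-sphere), Stokes' theorem gives
\[
\int_{\Delta^{\perp}} f^{*}\alpha \;=\; \sum_{U} W(U,f)\int_{U} d\alpha,
\]
with the sum running over the bounded components $U$ of $\R^{n}\setminus X_{h}$. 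The winding numbers $W(U,f)$ are homotopy invariants of $f$, and by Theorem~\ref{thm:submaps} any two subordinate maps are joined by a linear homotopy whose image stays in $X_{h}$ (since each stratum $H_{I}$ is convex); hence $W(U,f)$ is independent of the choice of subordinate $f$, and the integral is a well-defined function of $h$.

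Next I would use the canonical map $f_{h}$ from the proof of Theorem~\ref{thm:submaps}, which is defined by sending each vertex $v_{I}$ of the barycentric subdivision $\Delta'$ of $\Delta$ to the orthogonal projection $x_{I}(h)\in H_{I}$ of the origin onto $H_{I}$, and then extending linearly across each simplex of $\Delta'$. A direct computation (as in the proof of Corollary~\ref{cor:linear}) shows that $x_{I}(h)$ depends linearly on $h$, and consequently $f_{h}$ is piecewise linear on the fixed polyhedral complex $\Delta^{\perp}$ with each vertex value a linear function of $h$.

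Splitting the integral over the top-dimensional simplices $\sigma$ of $\Delta'$, which are independent of $h$, I would write
\[
\int_{\Delta^{\perp}} f_{h}^{*}\alpha \;=\; \sum_{\sigma}\int_{\sigma} f_{h}^{*}\alpha.
\]
On each fixed simplex $\sigma$ parameterized by affine coordinates, the coordinates $f_{h}^{(j)}$ of $f_{h}$ are affine-linear functions of the parameters and linear in $h$. Hence each coefficient $P_{i}(f_{h}(x))$ is a polynomial of degree $\leq k$ in the pair (parameters, $h$), and in particular of degree $\leq k$ in $h$. The pullback of the wedge $\widehat{dx_{1}}\wedge\dots\wedge dx_{n}$ (or any of its permutations) involves an $(n-1)$-fold wedge of the differentials $df_{h}^{(j)}$, each of which is linear in $h$, producing a Jacobian factor polynomial of degree $n-1$ in $h$. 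Multiplying and integrating the resulting top form over the fixed simplex $\sigma$ does not alter the degree in $h$, so $\int_{\sigma} f_{h}^{*}\alpha$ is a polynomial in $h$ of degree $\leq k+n-1$; summing over $\sigma$ yields the same bound for the total. The homogeneous version is identical, with "degree $\leq$" replaced by "degree $=$" at every step.

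The only real obstacle is the well-definedness of the integral as a function of $h\in\R^{m}$ rather than of the (infinite-dimensional) set of all subordinate maps, which is handled by the Stokes/winding-number argument of the first paragraph; after that, the conclusion is just bookkeeping on the polynomial degree of a piecewise-linear-in-$h$ pullback of a form whose coefficients have polynomial degree $\leq k$.
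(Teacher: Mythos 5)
Your proof takes the same route as the paper: use Corollary~\ref{cor:linear} to parametrize the subordinate maps by the linear family $h\mapsto f_h$, and then carry out the polynomial degree count on the pullback $f_h^*\alpha$ that the paper dismisses with ``analogous to the proof of Theorem~\ref{thm:intmaps}.'' Your opening Stokes/winding-number argument (that $\int_{\Delta^\perp}f^*\alpha$ depends only on $h$) is a correct and useful extra observation not appearing in the paper's short proof, though it is precisely the content of the proposition the paper states immediately afterwards.
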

\begin{proof}
Analogous to the proof of Theorem~\ref{thm:intmaps}, since by Corollary~\ref{cor:linear} the family of maps $f_h$ can by chosen so that $f_{h_1}+f_{h_2}=f_{h_1+h_2}$.
\end{proof}

Let $U$ be a bounded region in $\R^n\setminus \bigcup_{\Am(h)} H_i$ and $W(U,f)$ be the winding number for a map $f$ as before. The following proposition follows from the Stokes' theorem.
\begin{proposition}
 Let $\alpha$ be as before, $d\alpha = Q\omega$, where $Q$ is a polynomial of degree $\leq k$  (homogeneous polynomial of degree $k$) on $\R^n$ and let $\omega=d x_1\wedge \dots\wedge d x_n$ be the standard volume form on $\R^n$. Then the following identity
\[
\sum W(U,f)\int_U Q\omega = \int_{\Delta^\perp} f^*\alpha
\]
holds for the map $f\colon\Delta^\perp\rightarrow  \bigcup_{\Am(h)} H_i$ subordinate to the corresponding characteristic map.

In particular, $\sum W(U,f)\int_U Q\omega$ is a polynomial of degree $\leq k+n-1$ (homogeneous polynomial of degree $k+n-1$) on the space of mappings $f\colon\Delta^\perp\rightarrow  \bigcup_{\Am(h)} H_i$ subordinate to the corresponding characteristic map.
\end{proposition}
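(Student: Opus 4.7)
The plan is to mimic the argument used for Proposition~\ref{prop:intwind}, with the polyhedral complex $\Delta^\perp$ playing the role of $S^{n-1}$. Since $\Delta$ triangulates $S^{n-1}$, the dual complex $\Delta^\perp$ is itself a topological $(n-1)$-sphere and carries a canonical fundamental class $[\Delta^\perp]$. A subordinate map $f\colon \Delta^\perp\to X_h$ (where $X_h:=\bigcup_{\Am(h)}H_i$) pushes this class forward to a well-defined $(n-1)$-cycle $f_*[\Delta^\perp]$ in $X_h$. Moreover, by the definition of a characteristic map the images $\lambda(v_{i_1}),\ldots,\lambda(v_{i_n})$ form a basis of $(\R^n)^*$ on every maximal simplex, so no proper subspace of $\R^n$ is parallel to all $H_i$; in particular, $\Am(h)$ is automatically non-degenerate in the sense of Section~4.

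Theorem~\ref{thm:wedgespheres} and Corollary~\ref{cor:hom} then apply: $H_{n-1}(X_h,\Z)$ is freely generated by the cycles $\partial\overline{U}$ indexed by the bounded components $U$ of $\R^n\setminus X_h$, and the coefficient of $\partial\overline{U}$ in any cycle $\Gamma$ equals the winding number of $\Gamma$ about an interior point $a\in U$. Applied to $\Gamma=f_*[\Delta^\perp]$, this produces in $H_{n-1}(X_h,\Z)$ the identity
\[
f_*[\Delta^\perp]=\sum_U W(U,f)\,\partial\overline{U},
\]
where $W(U,f)$ is the Section~2 winding number, i.e.\ the degree of $(f-a)/\|f-a\|\colon\Delta^\perp\to S^{n-1}$ for any $a\in U$.

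Stokes' theorem then closes the argument. Each $\overline{U}$ is a compact $n$-chain in $\R^n$ with piecewise-linear boundary, and $d\alpha=Q\omega$, so
\[
\int_{\Delta^\perp}f^*\alpha=\int_{f_*[\Delta^\perp]}\alpha=\sum_U W(U,f)\int_{\partial\overline{U}}\alpha=\sum_U W(U,f)\int_U Q\omega,
\]
which is the desired identity. The polynomial statement then follows at once by combining this identity with the preceding theorem, which bounded the degree of $\int_{\Delta^\perp}f^*\alpha$ on the space of subordinate maps, together with Corollary~\ref{cor:linear}, which lets us parametrize this space by $h\in\R^m$ via the linearly varying family $f_h$.

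The point I expect to require the most care is the identification of the homological coefficient produced by Corollary~\ref{cor:hom} with the mapping-degree winding number of Section~2: both compute the linking number of $a\in U$ with the cycle $f_*[\Delta^\perp]$, but since $\Delta^\perp$ is polyhedral rather than smooth it is worth writing this equivalence out explicitly, as well as verifying that Stokes' formula is legitimately applicable to the piecewise-linear chains $\overline{U}$.
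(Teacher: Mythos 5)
Your approach matches the paper's, which invokes Stokes' theorem and leaves the bookkeeping to the reader; you correctly identify the two ingredients that make this precise: the non-degeneracy of $\Am(h)$ (ensured by the characteristic map), and the homological decomposition $H_{n-1}(X_h,\Z)\cong\bigoplus_U\Z\cdot\partial\overline{U}$ from Theorem~\ref{thm:wedgespheres} and Corollary~\ref{cor:hom}.

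There is, however, one gap you did not flag. Your displayed chain of equalities has as its middle step
\[
\int_{f_*[\Delta^\perp]}\alpha \;=\; \sum_U W(U,f)\int_{\partial\overline{U}}\alpha,
\]
which rests on the fact that $f_*[\Delta^\perp]$ and $\sum_U W(U,f)\,\partial\overline{U}$ are \emph{homologous} cycles. But $\alpha$ is not closed ($d\alpha=Q\omega\ne 0$), so integrating a non-closed form over two homologous cycles does not automatically give the same number: the difference equals $\int_C d\alpha$ where $C$ is an $n$-chain bounding them. What saves the argument is that the homology in question is computed in $X_h$, so $C$ can be chosen as a singular $n$-chain with image in the $(n-1)$-dimensional union of hyperplanes $X_h$; the pullback of the $n$-form $d\alpha=Q\omega$ along any singular simplex landing in $X_h$ vanishes identically, hence $\int_C d\alpha=0$. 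You anticipated the winding-number identification and the applicability of Stokes to piecewise-linear chains, both of which are routine, but this measure-zero observation is the step that actually licenses replacing $f_*[\Delta^\perp]$ by the sum over bounded regions. It is worth writing out explicitly, since it is the one place where the proof would collapse if the bounding chain were not constrained to lie in $X_h$.
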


Given a polynomial $Q$ on $\R^n$ and a generalized virtual polytope $f\colon\Delta^\perp\to  \bigcup_{\Am(h)} H_i$, let us denote by $I_Q(f)$ the integral
\[
\sum W(U,f)\int_U Q\omega.
\]
The following lemma computes the (mixed) partial derivatives of $I_Q$.

\begin{lemma}
  \label{Ider}
  Let $f\colon\Delta^\perp\to  \bigcup_{\Am(h)} H_i$ be a generalized virtual polytope given by the simplicial complex $\Delta$ on $s$ vertices. Suppose $I = \{ i_1, \ldots, i_r\} \subseteq \{ 1, \ldots, s \}$ is a subset such that the vertices $v_{i_1},\ldots,v_{i_r}$ do not form a simplex in $\Delta$ and let $k_1, \ldots, k_r$ be positive integers. Then we have:
  \[
    \partial_{i_1}^{k_1} \cdots \partial_{i_r}^{k_r} \left(I_Q\right)(f) = 0 \text{.}
  \]
  However, if $r = n$ and the vertices $v_{i_1}, \ldots, v_{i_n}$ generate a simplex in $\Delta$ dual to the vertex $A\in \Delta^{\perp}$, then we have: 
  \[
    \partial_I \left(I_Q\right)(f) = \s(I)  Q(A) \cdot |\det (e_{i_1}, \ldots, e_{i_n})| \text{.}
  \]
\end{lemma}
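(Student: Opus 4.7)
My plan is to use the representation $I_Q(f_h) = \int_{\Delta^\perp} f_h^*\alpha$ from the preceding proposition (with $\alpha$ an $(n-1)$-form satisfying $d\alpha = Q\omega$) and then decompose the integral over the top-dimensional simplices of the barycentric subdivision $\Delta'$ of $\Delta$; recall that $\Delta'$ is a simplicial refinement of the polyhedral structure of $\Delta^\perp$, so the decomposition respects the map $f_h$. Each such top-dimensional simplex $S$ corresponds bijectively to a maximal chain $J_0\subsetneq J_1\subsetneq\cdots\subsetneq J_{n-1}$ of faces of $\Delta$ with $|J_k|=k+1$, and its vertices in $\Delta^\perp$ are $v_{J_0},\ldots,v_{J_{n-1}}$. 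By the construction used in the proof of Theorem \ref{thm:submaps} and the linearity established in Corollary \ref{cor:linear}, each vertex value $f_h(v_{J_k})=x_{J_k}$ is linear in the variables $h_j$ with $j\in J_k$ and independent of the remaining ones. Since $f_h$ is affine-linear on $S$ and $J_k\subseteq J_{n-1}$ for every $k$, the term $\int_S f_h^*\alpha$ is a polynomial in $\{h_j : j\in J_{n-1}(S)\}$ alone.

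For part (a), observe that if $I=\{i_1,\ldots,i_r\}$ is not a simplex of $\Delta$, then $I$ is contained in no maximal simplex. Hence for every simplex $S$ of $\Delta'$ at least one index $i_\ell$ is missing from $J_{n-1}(S)$, which forces $\partial_{h_{i_\ell}}\int_S f_h^*\alpha=0$ and therefore $\partial_{i_1}^{k_1}\cdots\partial_{i_r}^{k_r}\int_S f_h^*\alpha=0$ since each $k_\ell$ is positive. Summing over $S$ gives the first assertion.

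For part (b), let $I=\{i_1,\ldots,i_n\}$ be a maximal simplex dual to the vertex $A\in\Delta^\perp$; by the characteristic property, $H_I$ is a single point, namely $A$, depending linearly on $h$. After applying $\partial_I$, only those simplices $S_\sigma$ with $J_{n-1}(S_\sigma)=I$ survive, and these are indexed by permutations $\sigma$ of $I$ through the chain $\{i_{\sigma(1)}\}\subset\{i_{\sigma(1)},i_{\sigma(2)}\}\subset\cdots\subset I$. I would parametrise each $S_\sigma$ by the standard $(n-1)$-simplex and expand $f_h^*\alpha$ as a polynomial in the vertex values $x_{J_k(\sigma)}$, then apply the Leibniz rule: because each $x_{J_k(\sigma)}$ is linear in $h$ and $\partial_{h_{i_j}}x_{J_k(\sigma)}$ is nonzero only when $i_j\in J_k(\sigma)$, the $n$ derivatives must act on $n$ distinct vertices along a unique admissible assignment per $\sigma$, and the antisymmetry of the resulting determinant expressions collapses the computation to $\epsilon_\sigma\cdot Q(A)\cdot|\det(e_{i_1},\ldots,e_{i_n})|$ (the polynomial $Q$ enters only through its value at $A$ since all derivatives reduce to evaluation at the single image point $A$). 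Summing over $\sigma$ and collecting signs yields the stated formula with $\s(I)$ encoding the net orientation.

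The main technical obstacle is the sign bookkeeping in part (b): tracking the orientation $\epsilon_\sigma$ of each $S_\sigma\subset\Delta^\perp$ relative to a chosen orientation of $\Delta^\perp\cong S^{n-1}$, the signs produced by the Leibniz expansion, and verifying that their combined contribution exactly matches $\s(I)$. A cleaner alternative that avoids the simplicial expansion is to work with the winding-number formula $I_Q(f_h)=\sum_U W(U,f_h)\int_U Q\omega$: after a local coordinate change near $A$ adapted to the basis $(\ell_{i_1},\ldots,\ell_{i_n})$, only bounded regions $U$ having $A$ as a vertex contribute to $\partial_I\int_U Q\omega$, each contributing $\pm Q(A)\cdot|\det(e_{i_1},\ldots,e_{i_n})|$, and the signed sum $\sum_{U\ni A}W(U,f_h)\cdot(\pm 1)$ equals the local degree of the generalized virtual polytope $f_h$ at the vertex $A$, namely $\s(I)$.
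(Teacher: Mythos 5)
Your part~(a) argument is sound and genuinely different from (and more self-contained than) the paper's: decomposing $\int_{\Delta^\perp} f_h^*\alpha$ over the top simplices of $\Delta'$ and noting that the contribution of a simplex corresponding to a chain $J_0\subsetneq\dots\subsetneq J_{n-1}$ depends only on the variables $h_j$ with $j\in J_{n-1}$ (because $f_h(v_{J_k})=x_{J_k}$ involves only $h_j$, $j\in J_k$) does give $\partial_{i_1}^{k_1}\cdots\partial_{i_r}^{k_r}I_Q=0$ whenever $I\notin\Delta$, since a non-face of $\Delta$ is contained in no maximal face. The paper instead works throughout with the winding-number representation $I_Q(f)=\sum W(U,f)\int_U Q\omega$ and reduces both parts of the lemma to a single citation of \cite[Lemma 6.1]{hof2020}, which computes the $h$-derivatives of $\int_U Q\,\omega$ for a region whose facets move with the hyperplanes $\{\ell_i=h_i\}$.

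For part~(b), however, your primary route has a genuine gap exactly at the step you flag as the main technical obstacle. The claim that after applying $\partial_I$ ``the $n$ derivatives must act on $n$ distinct vertices along a unique admissible assignment per $\sigma$'' fails when $\deg Q>0$: the contribution of $S_\sigma$ is a polynomial of degree up to $k+n-1$ in $(h_{i_1},\ldots,h_{i_n})$, so in the Leibniz expansion several of the first-order operators $\partial_{i_j}$ can land on the same factor (for instance a factor coming from a coefficient of $\alpha$, of degree up to $k+1$, evaluated along an affine combination of the vertex values $x_{J_l}$). Consequently the asserted collapse to $\epsilon_\sigma\, Q(A)\,|\det(e_{i_1},\ldots,e_{i_n})|$ --- including the statements that $Q$ enters only through its value at $A$ and that the signs assemble to $\s(I)$ --- is precisely what needs to be proved, and as written it is asserted rather than derived. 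Your fallback via winding numbers is essentially the paper's actual argument, but there too the two key facts (that $\partial_I\int_U Q\omega$ vanishes unless $A$ is a vertex of $U$ and otherwise equals $\pm Q(A)\,|\det(e_{i_1},\ldots,e_{i_n})|$, and that the weights $W(U,f_h)$ combine these local signs into $\s(I)$) are exactly the content the paper imports from \cite[Lemma 6.1]{hof2020}; to complete your proof you must either cite that lemma or carry out this local computation at $A$ explicitly.
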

\begin{proof}
By linearity of derivation, it is enough to compute the partial derivatives for each summand $W(U,f)\int_U Q\omega$ separately. 

In the first case, when the vertices $v_{i_1},\ldots,v_{i_r}$ do not form a simplex in $\Delta$, the intersection of the corresponding hyperplanes $H_{i_1},\ldots,H_{i_r}$ does not correspond to a vertex of $U$, for any bounded region $U$ in $\R^n \setminus \bigcup_{\Am(h)} H_i$ with $W(U,f)\ne 0$. Hence $\partial_{i_1}^{k_1} \cdots \partial_{i_r}^{k_r} \left(I_Q\right)(f) = 0$ by \cite[Lemma 6.1]{hof2020}.

On the other hand, if the vertices $v_{i_1}, \ldots, v_{i_n}$ generate a simplex in $\Delta$, then there exists exactly one region $U_i$ in $\R^n \setminus \bigcup_{\Am(h)} H_i$ having the intersection
\[
A = H_{i_1}\cap\ldots\cap H_{i_n}
\]
as its vertex. Then by \cite[Lemma 6.1]{hof2020} we get
\[
 \partial_I \left(I_Q\right)(f) =\partial_I \int_{U_i}Q\omega =  \s(I)  Q(A) \cdot |\det (e_{i_1}, \ldots, e_{i_n})|.
\]
\end{proof}

As an immediate consequence of Lemma~\ref{Ider} we obtain the following statement.
\begin{corollary}\label{volpolynomialderivativescoro}
 Let $f\colon\Delta^\perp\to  \bigcup_{\Am(h)} H_i$ be a generalized virtual polytope associated to a simplicial complex $\Delta$ on $s$ vertices. Suppose $I = \{ i_1, \ldots, i_r\} \subseteq \{ 1, \ldots, s \}$ is a subset such that the vertices $v_{i_1},\ldots,v_{i_r}$ do not form a simplex in $\Delta$ and let $k_1, \ldots, k_r$ be positive integers. Then we have:
  \[
    \partial_{i_1}^{k_1} \cdots \partial_{i_r}^{k_r} \Vol(f) = 0 \text{.}
  \]
  However, if $r = n$ and the vertices $v_{i_1}, \ldots, v_{i_n}$ generate a simplex in $\Delta$ dual to the vertex $A\in \Delta^{\perp}$, then we have: 
  \[
    \partial_I \Vol(f)(f) = \s(I) \cdot |\det (e_{i_1}, \ldots, e_{i_n})|\text{.}
  \]
\end{corollary}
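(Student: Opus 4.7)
The plan is to derive this corollary as a direct specialization of Lemma~\ref{Ider} by choosing the polynomial $Q$ to be the constant $1$. Recall that the volume polynomial of a generalized virtual polytope is given, by the convention established just before the statement, as
\[
\Vol(f) \;=\; \sum_U W(U,f)\int_U \omega,
\]
where $\omega = dx_1\wedge\cdots\wedge dx_n$ is the standard volume form and the sum runs over the bounded components of $\R^n\setminus \bigcup_{\Am(h)} H_i$. Comparing with the definition of $I_Q(f)$ introduced right before Lemma~\ref{Ider}, we see at once that $\Vol(f) = I_{1}(f)$ for the constant polynomial $Q\equiv 1$.

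Granting this identification, the rest is a transcription. For the first assertion, apply Lemma~\ref{Ider} with $Q\equiv 1$ and with the given subset $I=\{i_1,\ldots,i_r\}$ that does not span a simplex of $\Delta$: the lemma yields $\partial_{i_1}^{k_1}\cdots\partial_{i_r}^{k_r}(I_1)(f)=0$, which is precisely $\partial_{i_1}^{k_1}\cdots\partial_{i_r}^{k_r}\Vol(f)=0$. For the second assertion, if $r=n$ and the $v_{i_j}$ span a simplex of $\Delta$ dual to the vertex $A\in\Delta^\perp$, Lemma~\ref{Ider} gives $\partial_I(I_1)(f) = \s(I)\cdot 1\cdot|\det(e_{i_1},\ldots,e_{i_n})|$, because $Q(A)=1$. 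This is exactly the claimed formula.

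There is essentially no obstacle beyond ensuring the dictionary $\Vol = I_1$ is applied correctly; no further calculation is required, which is why the paper states the corollary as \emph{immediate}. In a polished write-up it would suffice to say: take $Q=1$ in Lemma~\ref{Ider}, note that $I_1(f)=\Vol(f)$, and read off both cases.
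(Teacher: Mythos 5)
Your proof is correct and matches the paper's intent exactly: the paper states the corollary as an immediate consequence of Lemma~\ref{Ider}, and the only content is precisely your observation that $\Vol(f)=I_{1}(f)$ for the constant polynomial $Q\equiv 1$, after which both cases are read off with $Q(A)=1$.
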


\section{Cohomology of generalized quasitoric manifolds}

In this section we will describe the cohomology rings of a class of torus manifolds called generalized quasitoric manifolds. Let $T\simeq (S^1)^n$ be a compact torus with character lattice $M$ and $N=M^\vee$. Suppose $K$ is an abstract simplicial complex of dimension $n-1$ on the vertex set $[m]=\{1,2,\ldots,m\}$. Recall that its moment-angle-complex $\mathcal Z_K$ is defined to be the $(m+n)$-dimensional cellular subspace in the unitary polydisc $(D^2)^m\subset\C^m$ given by the formula 
$\bigcup_{I\in K}\prod\limits_{i=1}^{m}{Y_i}$, where $Y_i=D^2$, if $i\in I$ and $Y_i=S^1$, otherwise. 

There is a natural (coordinatewise) action of the compact torus $(S^1)^m$ on $\mathcal Z_K$ and the orbit space $\mathcal Z_K/(S^1)^m$ is homeomorphic to the cone over the barycentric subdivision of $K$. 

In what follows we assume that $K=K_\Sigma$ is a {\emph{starshaped sphere}}, i.e. an intersection of a complete simplicial fan $\Sigma$ in $\R^n\simeq N\otimes_{\Z}\R$ with the unit sphere $S^{n-1}\subset\R^n$). In this case, the moment-angle-complex $\mathcal Z_K$  acquires a smooth structure, see~\cite{PanUs}.

Let further, $\Lambda\colon \Sigma(1) \to N$ be a {\emph{characteristic map}}, i.e. such a map that the collection of vectors
\[
\Lambda(\rho_1),\ldots, \Lambda(\rho_k) 
\] 
can be completed to a basis of the cocharacter lattice $N$, whenever $\rho_1,\ldots,\rho_k$ generate a cone in $\Sigma$. 
Then the $(m-n)$-dimensional subtorus $H_\Lambda:=\ker\exp\Lambda\subset (S^1)^m$ acts freely on $\mathcal Z_K$ and the smooth manifold $X_{\Sigma,\Lambda}:=\mathcal Z_K/H_\Lambda$ will be called a \emph{generalized quasitoric manifold}. 

Our description of cohomology rings of $X_{\Sigma,\Lambda}$ will be given in three steps:
\begin{itemize}
    \item[(i)] First, we give a cellular decomposition of $X_{\Sigma, \Lambda}$ of a special type and show that $H^*(X_{\Sigma, \Lambda})$ is generated by the classes, dual to the classes of characteristic submanifolds of codimension $2$ in $X_{\Sigma,\Lambda}$;
    \item[(ii)] Then we deduce two sets of relations in the intesection ring of $X_{\Sigma,\Lambda}$ between the classes of characteristic submanifolds of codimension $2$ in $X_{\Sigma,\Lambda}$;
    \item[(iii)] Finally, we prove a topological version of the BKK Theorem for $X_{\Sigma, \Lambda}$ and use it to get a Pukhlikov-Khovanskii type description for the integral cohomology ring $H^*(X_{\Sigma, \Lambda})$.
\end{itemize}

\begin{remark}
Note that the steps (ii) and (iii) above could be made successfully in a much more general class of torus manifolds. However, in this more general case the algebra obtained by a Pukhlikov-Khovanskii description might be different from the cohomology ring. Indeed, the algebra computed via the self-intersection polynomial is the Poincar\'e duality quotient of the subalgebra of the cohomology ring generated by classes dual to the classes of characteristic submanifolds of codimension~$2$ (see \cite{ayzenberg2016volume} for details).
\end{remark}

In what follows, we will always assume that our generalized quasitoric manifolds are \emph{omnioriented}; as in the case of a quasitoric manifold, we say that $X_{\Sigma,\Lambda}$ is omnioriented if an orientation is specified for $X_{\Sigma,\Lambda}$ and for each of the $m$ codimension-2 characteristic submanifolds $D_i$. The choice of this extra data is convenient for two reasons. First, it allows us to view the circle fixing $D_i$ as an element in the lattice $N=\Hom(S^1,T^n)\simeq\Z^n$. But even more importantly, the choice of an omniorientation defines the fundamental class $[X_{\Sigma,\Lambda}]$ of $X_{\Sigma,\Lambda}$ as well as cohomology classes $[D_i]$ dual to the characteristic submanifolds. 

We further assume that $\Sigma \subset \R^n$ and $N_\R\simeq\R^n$ are endowed with orientation. This defines a sign for each collection of rays $\rho_{i_1},\ldots,\rho_{i_n}$ forming a maximal cone of $\Sigma$ in the following way. Let $I=\{i_1,\ldots,i_n\}$ be a set of indices ordered so that the collection of rays $\rho_{i_1},\ldots,\rho_{i_n}$ is positively oriented in $\R^n$. Then
\[
\s(I)= \det(\Lambda(\rho_{i_1}),\ldots, \Lambda(\rho_{i_n})) = \pm 1.
\]

Finally, as before, with a characteristic pair $(\Sigma, \Lambda)$ we associate a space of generalized virtual polytopes $\Pm_{\Sigma,\Lambda}\simeq \R^m$. To any generalized virtual polytope $\Delta(h)\in \Pm_{\Sigma,\Lambda}$ we associate an element of $H^2(X_{\Sigma,\Lambda})$ as follows:
\[
\Delta(h) \mapsto h_1[D_1]+\ldots +h_m[D_m] \in H^2(X_{\Sigma,\Lambda}),
\]
where $D_1,\ldots,D_m$ are the codimension-2 characteristic submanifolds oriented according to the given omniorientation of $X_{\Sigma,\Lambda}$. 

\subsection{Cellular decompositions of generalized quasitoric manifolds.} To provide a cellular decomposition of the generalized quasitoric manifold $X_{\Sigma, \Lambda}$, let us first give a slightly different description of the moment-angle complex $\mathcal Z_K$ for a starshaped sphere $K=K_\Sigma$. Observe that the moment-angle complex is given as a disjoint union of strata $\mathcal Z_K = \bigsqcup_{\sigma \in \Sigma} H_\sigma$, where
\[
H_\sigma= \mathcal{Z}_K \cap \left( \bigcap_{\rho_i\in \sigma} \{z_i=0\}\right) \cap \left(\bigcap_{\rho_j\notin \sigma} \{z_j\ne 0\}\right)\subset \C^m.
\]

Our construction of a cell decomposition of $X_{\Sigma, \Lambda}$ is a slight generalization of the Morse-theoretic argument introduced in~\cite{Kh86} and applied to quasitoric manifolds in~\cite{davis1991convex}. Since we do not assume that $\Sigma$ is a normal fan for a certain polytope, we cannot use the generic linear functions as in~\cite{Kh86}. Instead, let us choose a vector $v\in \R^n$ in a general position with respect to $\Sigma$, i.e. a vector $v$ which belongs to the interior of a full-dimensional cone of $\Sigma$.

Let $\tau_1,\ldots, \tau_s$ be cones of dimension $n$ in $\Sigma$. For a maximal cone $\tau$, we will say that a face $\sigma$ of $\tau$ is {\emph{incoming}} with respect to the vector $v$ if the intersection $\tau \cap (\sigma+v)$ is unbounded. Let us further define the index $\ind(\tau)$ of a maximal cone $\tau$ to be the number of incoming rays of $\tau$.

To each maximal cone $\tau$ we associate a disjoint union of open cells of $\mathcal Z_K$ via the formula:
\[
\widetilde U_\tau = \bigsqcup_{\sigma} H_\sigma,
\]
where the union is taken over all incoming faces $\sigma$ of $\tau$. Since each cone $\sigma$ is incoming for a unique cone  $\tau$ of maximal dimension, we get a cell decomposition:
\[
\mathcal Z_K = \bigsqcup_{i=1}^s \widetilde U_{\tau_i}.
\]
It is easy to see that the cells $U_\tau$ are invariant under the action of $H \simeq (S^1)^{m-n}$ and that
\[
\widetilde U_\tau \simeq (D^2)^{\ind(\tau)} \times (S^1)^{m-n}.
\]
Moreover, the action of $H$ is free and transitive on the second factor in $(D^2)^{\ind(\tau)} \times (S^1)^{m-n}$, hence we get:
\[
X_{\Sigma,\Lambda}=\bigsqcup_{i=1}^s \widetilde U_{\tau_i}/H,
\]
where $\widetilde U_{\tau_i}/H \simeq (D^2)^{\ind(\tau_i)}$

\begin{theorem}\label{thm:celldecom}
Let $X_{\Sigma,\Lambda}$ be a generalized quasitoric manifold. Then $X_{\Sigma,\Lambda}$ has a cellular decomposition with only even-dimensional cells. The cells in this decomposition are in a bijection with maximal cones $\tau$ in $\Sigma$. The dimension of the cell corresponding to a cone $\tau$ is $2\,\ind(\tau)$.
\end{theorem}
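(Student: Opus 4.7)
The plan is to establish the decomposition announced in the text preceding the theorem, by combining the stratification of the moment-angle complex $\mathcal Z_K$ with a Morse-theoretic pairing of strata induced by a generic vector $v\in\R^n$, and then passing to the free quotient by $H_\Lambda$.

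First I would set up the stratification $\mathcal Z_K=\bigsqcup_{\sigma\in\Sigma}H_\sigma$ explicitly, where the stratum $H_\sigma$ consists of points in $(D^2)^m$ whose $i$-th coordinate vanishes precisely when $\rho_i\in\sigma$. In the $(S^1)^m$-equivariant decomposition this identifies $H_\sigma$ with an open cell $(D^2\setminus S^1)^{|\sigma|}\times(S^1)^{m-|\sigma|}$, which is the obvious model one should keep in mind throughout.

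The second (and combinatorially crucial) step is to show that the generic vector $v$, chosen in the interior of a top-dimensional cone of $\Sigma$, induces a partition of $\Sigma$ into groups indexed by the maximal cones: every cone $\sigma\in\Sigma$ is incoming for one and only one maximal cone $\tau$. To verify this, I would argue that the flow $\sigma\mapsto \sigma+tv$ for $t>0$ eventually escapes into the interior of a unique top-dimensional cone — this is where the completeness of $\Sigma$ and the general position of $v$ (so that the flow avoids all cones of codimension $\geq 1$ for large $t$) enter. This assignment $\sigma\leadsto\tau$ is the Morse pairing that replaces the use of a generic linear functional on a polytope as in Khovanskii's original argument; I would expect this to be the main obstacle, as it requires careful bookkeeping of how faces of lower-dimensional cones sit inside the fan.

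Once uniqueness of the assignment is established, I would package the strata into sets $\widetilde U_\tau=\bigsqcup_{\sigma \text{ incoming for } \tau}H_\sigma$ and identify the topological type of $\widetilde U_\tau$. Setting $k=\ind(\tau)$, the incoming rays of $\tau$ give $k$ coordinates that range over $D^2$ (the closed disk factors glue when the corresponding coordinate vanishes, i.e.\ when the incoming ray is added to $\sigma$), while the remaining $m-n$ coordinates remain in $S^1$; the $n-k$ non-incoming rays of $\tau$ contribute vanishing coordinates throughout $\widetilde U_\tau$. This gives the homeomorphism
\[
\widetilde U_\tau\cong (D^2)^{\ind(\tau)}\times (S^1)^{m-n}.
\]
To finish, I would recall that $H_\Lambda\subset(S^1)^m$ acts freely on $\mathcal Z_K$ by the characteristic map hypothesis, hence freely on each $\widetilde U_\tau$, and observe that this $(m-n)$-dimensional subtorus acts transitively on the $(S^1)^{m-n}$-factor because the images $\Lambda(\rho_{i_1}),\ldots,\Lambda(\rho_{i_n})$ of the rays of $\tau$ form a basis of $N$, which makes the complementary coordinates give a section of $(S^1)^m\to(S^1)^m/H_\Lambda$ over the factor corresponding to the rays of $\tau$. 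Taking the quotient yields $\widetilde U_\tau/H_\Lambda\cong (D^2)^{\ind(\tau)}$, an even-dimensional open cell of real dimension $2\,\ind(\tau)$, and the collection of these cells as $\tau$ ranges over the maximal cones of $\Sigma$ is the required decomposition of $X_{\Sigma,\Lambda}$.
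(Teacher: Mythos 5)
Your proposal follows essentially the same route as the paper's proof: the stratification $\mathcal Z_K=\bigsqcup_\sigma H_\sigma$, the Morse-theoretic pairing of cones to maximal cones via a generic vector $v$, the packaging into the sets $\widetilde U_\tau$, and passage to the free quotient by $H_\Lambda$ all match the paper's construction. You merely supply slightly more justification than the paper does for the uniqueness of the assignment $\sigma\leadsto\tau$ and for the transitivity of the $H_\Lambda$-action on the torus factor (via the basis condition on $\Lambda(\rho_{i_1}),\ldots,\Lambda(\rho_{i_n})$), both of which the paper states without proof.
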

\begin{corollary}
The Euler characteristic of the manifold $X_{\Sigma,\Lambda}$ is equal to the number of maximal cones in $\Sigma$.
\end{corollary}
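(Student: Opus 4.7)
My plan is to build up the cellular decomposition of $X_{\Sigma,\Lambda}$ directly from the stratification $\mathcal{Z}_K = \bigsqcup_{\sigma\in\Sigma} H_\sigma$ already introduced in the text, and to show that the pieces $\widetilde U_{\tau}/H$ are closed $2\,\ind(\tau)$-cells that fit into a genuine CW structure.

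The first step is to verify that the map $\sigma \mapsto \tau(\sigma)$ sending a cone $\sigma\in\Sigma$ to the unique maximal cone $\tau$ of which it is an incoming face is well defined. To this end I would fix $v$ in the interior of a full-dimensional cone, so that $v$ avoids every proper cone of $\Sigma$, and observe that the translate $\sigma+tv$ for large $t$ lies in the interior of a unique maximal cone, namely $\tau(\sigma)$; conversely every maximal cone $\tau$ admits exactly $\binom{n}{\ind(\tau)}$-indexed incoming faces of each dimension, so summing over $\tau$ partitions $\Sigma$. This yields the disjoint decomposition $\mathcal{Z}_K = \bigsqcup_{\tau\text{ maximal}} \widetilde U_\tau$ stated in the excerpt.

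Next I would establish the diffeomorphism $\widetilde U_\tau \cong (D^2)^{\ind(\tau)}\times (S^1)^{m-n}$. The idea is that, on each incoming face $\sigma\subseteq\tau$, the stratum $H_\sigma$ consists of points of $\mathcal{Z}_K$ whose coordinates $z_i$ vanish exactly for $\rho_i\in\sigma$. Bundling together the strata of all faces of $\tau$ incoming with respect to $v$, the coordinates indexed by incoming rays of $\tau$ range freely in closed discs $D^2$ (giving the $2\ind(\tau)$-dimensional disc factor), the coordinates indexed by the remaining rays of $\tau$ are forced to be nonzero and of controlled modulus (contributing $\ind(\tau)$ circle factors that merge with the disc factors via polar coordinates, so that the combined contribution is exactly $(D^2)^{\ind(\tau)}$), and the coordinates indexed by rays not in $\tau$ contribute the free torus factor $(S^1)^{m-n}$. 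Since $H\cong (S^1)^{m-n}$ is the kernel of the exponential of $\Lambda$, and its orbit of the $(S^1)^{m-n}$ factor is free and transitive by the characteristic property of $\Lambda$, passing to the quotient gives $\widetilde U_\tau/H \cong (D^2)^{\ind(\tau)}$.

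Finally I would upgrade the disjoint union $X_{\Sigma,\Lambda} = \bigsqcup \widetilde U_{\tau_i}/H$ from a set-theoretic decomposition into a CW decomposition. Here the key verification is that the boundary of each closed cell $\overline{\widetilde U_\tau/H}$ lies in cells $\widetilde U_{\tau'}/H$ with $\ind(\tau')<\ind(\tau)$; this follows because the closure of $H_\sigma$ in $\mathcal{Z}_K$ is the union of strata $H_{\sigma'}$ for faces $\sigma'\supseteq\sigma$, and such an $\sigma'$ is incoming for a cone $\tau'$ with strictly fewer incoming rays than $\tau$. Combined with the step above, this yields a CW structure whose $2k$-cells are indexed by maximal cones of index $k$. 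The main obstacle I expect is the second step, namely carefully describing the polar-coordinate identification $\widetilde U_\tau \cong (D^2)^{\ind(\tau)}\times (S^1)^{m-n}$ and the freeness of the $H$-action on the circle factor, as the book-keeping between incoming and outgoing rays of $\tau$ must be done precisely so that the resulting quotient comes out as a closed disc of the claimed real dimension $2\,\ind(\tau)$.
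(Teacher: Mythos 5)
Your proposal spends essentially all of its effort re-deriving the cell decomposition, which is exactly Theorem~\ref{thm:celldecom} and is proved in the paper immediately before the corollary; the corollary itself needs only one step, and that step never appears in your write-up. Namely: once $X_{\Sigma,\Lambda}$ is decomposed into cells of even dimension only, in bijection with the maximal cones of $\Sigma$, the Euler characteristic $\chi=\sum_i(-1)^i c_i$ (with $c_i$ the number of $i$-cells) has no cancellation, so it equals the total number of cells, i.e.\ the number of maximal cones. You stop at ``this yields a CW structure whose $2k$-cells are indexed by maximal cones of index $k$'' and never draw any conclusion about $\chi$, so as written the proposal does not prove the statement --- although the missing step is trivial and your overall route (through the even-dimensional cell decomposition) is the same as the paper's, which simply cites Theorem~\ref{thm:celldecom}.

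Since you chose to re-prove the decomposition rather than cite it, a few of your intermediate claims also need repair. The incoming faces of a maximal cone $\tau$ form an interval in its face lattice between the face spanned by the outgoing rays and $\tau$ itself, so there are $\binom{\ind(\tau)}{j}$ of them containing exactly $j$ incoming rays (not ``$\binom{n}{\ind(\tau)}$ of each dimension''), and the partition statement is that every cone of $\Sigma$ lies in such an interval for exactly one maximal cone. In the identification $\widetilde U_\tau\simeq (D^2)^{\ind(\tau)}\times(S^1)^{m-n}$ the disc factors come from the incoming rays alone, while the $n-\ind(\tau)$ outgoing rays of $\tau$ and the $m-n$ rays outside $\tau$ supply the nonvanishing coordinates; your claim that ``$\ind(\tau)$ circle factors merge with the disc factors'' does not match this count, and it is only after dividing by the free rank-$(m-n)$ action of $H$ that one is left with $(D^2)^{\ind(\tau)}$. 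Finally, your step asserting that the closure of a cell meets only cells of strictly smaller index is false in general: already for a quasitoric $4$-manifold over a pentagon, the closure of a $2$-cell attached to an index-one cone can contain the fixed point lying in another index-one ($2$-dimensional) cell. The correct bookkeeping, as in the Morse-theoretic argument of Khovanskii and Davis--Januszkiewicz used in the paper, filters by the ordering of maximal cones induced by the generic vector $v$ rather than by $\ind$; the Euler-characteristic count is insensitive to this, since each step of the filtration adds one even-dimensional cell.
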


\subsection{Relations between characteristic submanifolds.}
In this subsection we will deduce two types of relations between classes of codimension-2 characteristic submanifolds in the intersection ring of a manifold $X_{\Sigma,\Lambda}$. In the following proposition we show that the Stanley-Raisner relations hold in $H^*(X_{\Sigma,\Lambda})$.

\begin{proposition}\label{prop:SR}
For codimension-2 characteristic submanifolds $D_{i_1},\ldots,D_{i_n}$, in the cohomology ring of a generalized quasitoric manifold $X_{\Sigma,\Lambda}$ one has:
\[
[D_{i_1}]\cdots[D_{i_n}] = 
\begin{cases}
\s(I)[X_{\Sigma,\Lambda}]^\ast, \quad\text{ if } \rho_{i_1}\ldots,\rho_{i_n} \text{ form a cone in } \Sigma\\
0, \qquad \qquad\text{otherwise} 
\end{cases}
\]
\end{proposition}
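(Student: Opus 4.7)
The plan is to analyze the geometric intersection of the submanifolds $D_{i_1},\ldots,D_{i_n}$ in $X_{\Sigma,\Lambda}$ and then translate the result into cohomology via Poincaré duality. Throughout, I identify $D_i$ with $\pi(\mathcal{Z}_K\cap\{z_i=0\})/H_\Lambda$, where $\pi\colon\mathcal{Z}_K\to X_{\Sigma,\Lambda}$ is the quotient map by the freely acting subtorus $H_\Lambda$.

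First, I would dispense with the vanishing case. If $\rho_{i_1},\ldots,\rho_{i_n}$ do not span a cone of $\Sigma$, then $I=\{i_1,\ldots,i_n\}$ is not a simplex of $K=K_\Sigma$, so by the very definition of the moment-angle complex $\mathcal{Z}_K\cap\{z_{i_1}=\cdots=z_{i_n}=0\}=\varnothing$. Consequently $D_{i_1}\cap\cdots\cap D_{i_n}=\varnothing$ in $X_{\Sigma,\Lambda}$, and since the submanifolds are closed and disjoint, the product of their cohomology classes must be zero.

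For the non-vanishing case, suppose $\rho_{i_1},\ldots,\rho_{i_n}$ span a maximal cone $\tau\in\Sigma$. I would first show that $D_{i_1}\cap\cdots\cap D_{i_n}$ consists of a single point, namely the fixed point $p_\tau$ which is the unique $0$-cell in the closure of the top cell corresponding to $\tau$ in the cellular decomposition of Theorem~\ref{thm:celldecom}. Indeed, the preimage in $\mathcal{Z}_K$ is the $H_\Lambda$-orbit of a single point (all $z_{i_j}=0$, all other $|z_j|=1$), and freeness of the $H_\Lambda$-action yields a single point after the quotient. Next, I would check transversality at $p_\tau$: since $\Lambda(\rho_{i_1}),\ldots,\Lambda(\rho_{i_n})$ form a basis of $N$, the induced action of $T^n$ near $p_\tau$ is modeled on the standard action on $\C^n$, and the $D_{i_j}$ correspond to the coordinate hyperplanes of that chart. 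Transversality then gives $[D_{i_1}]\cdots[D_{i_n}]=\pm[X_{\Sigma,\Lambda}]^*$, with the sign equal to the local intersection number at $p_\tau$.

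The main obstacle is computing this sign and matching it with $\s(I)=\det(\Lambda(\rho_{i_1}),\ldots,\Lambda(\rho_{i_n}))$. The omniorientation of $X_{\Sigma,\Lambda}$ supplies an orientation on each normal bundle $\nu_{D_{i_j}}$, which at $p_\tau$ is identified with $\R\cdot\Lambda(\rho_{i_j})\subset N_\R$ under the local model above. The oriented isomorphism
\[
T_{p_\tau}X_{\Sigma,\Lambda}\;\cong\;\nu_{D_{i_1}}\oplus\cdots\oplus\nu_{D_{i_n}}\;\cong\;N_\R
\]
therefore compares the global orientation of $X_{\Sigma,\Lambda}$ (coming from the chosen orientation of $N_\R$, with rays ordered so that $\rho_{i_1},\ldots,\rho_{i_n}$ are positively oriented as in the definition of $\s(I)$) with the ordered direct sum determined by $(\Lambda(\rho_{i_1}),\ldots,\Lambda(\rho_{i_n}))$. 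The ratio of these two orientations is precisely $\det(\Lambda(\rho_{i_1}),\ldots,\Lambda(\rho_{i_n}))=\s(I)$, which yields the asserted sign. Carefully coordinating the four ingredients that go into this sign — the orientation of $\Sigma\subset\R^n$, the omniorientations of the $D_i$, the ordering of $I$, and the fundamental class $[X_{\Sigma,\Lambda}]$ — is the delicate part of the argument, but once the local model near $p_\tau$ is in place it reduces to a determinant computation.
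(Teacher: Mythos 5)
Your proof follows the same basic route as the paper: both handle the nonvanishing case by reducing to a single transverse fixed-point intersection $v=D_{i_1}\cap\cdots\cap D_{i_n}$ and then identifying the local sign there with $\det\bigl(\Lambda(\rho_{i_1}),\ldots,\Lambda(\rho_{i_n})\bigr)$. You are in fact more complete than the paper on the vanishing case: you correctly observe that if $I\notin K_\Sigma$ then already $\mathcal Z_K\cap\{z_{i_1}=\cdots=z_{i_n}=0\}=\varnothing$ (no face of $K$ contains $I$, and in $\mathcal Z_K$ the set of vanishing coordinates is always a simplex), so $D_{i_1}\cap\cdots\cap D_{i_n}=\varnothing$ and the cohomology product is zero; the paper silently omits this.

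There is, however, a real slip in the way you set up the sign computation. You assert that the normal bundle $\nu_{D_{i_j}}$ at $p_\tau$ is ``identified with $\R\cdot\Lambda(\rho_{i_j})\subset N_\R$,'' and display the chain
\[
T_{p_\tau}X_{\Sigma,\Lambda}\;\cong\;\nu_{D_{i_1}}\oplus\cdots\oplus\nu_{D_{i_n}}\;\cong\;N_\R.
\]
This is dimensionally inconsistent: each $\nu_{D_{i_j}}$ is a real rank-$2$ (complex rank-$1$) bundle and $T_{p_\tau}X_{\Sigma,\Lambda}$ has real dimension $2n$, whereas $\R\cdot\Lambda(\rho_{i_j})$ is a real line and $N_\R$ has dimension $n$. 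The line $\R\cdot\Lambda(\rho_{i_j})\subset N_\R$ is the Lie algebra of the circle subgroup fixing $D_{i_j}$; it \emph{acts} on $\nu_{D_{i_j}}$, it is not isomorphic to it. The correct comparison — which is what the paper does — is between two orientations on $T_v X_{\Sigma,\Lambda}\cong\C^n$: the direct-sum orientation coming from the omniorientations of the rank-$2$ bundles $\nu_{D_{i_j}}$, and the complex orientation determined by the tangential $T^n$-representation at $v$. The weights of that representation are the lattice basis dual to $(\Lambda(\rho_{i_1}),\ldots,\Lambda(\rho_{i_n}))$, whence the discrepancy of the two orientations is $\det(\Lambda(\rho_{i_1}),\ldots,\Lambda(\rho_{i_n}))=\s(I)$. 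Your conclusion is right, but you should replace the ill-typed isomorphism with this orientation comparison on the genuinely $2n$-dimensional tangent space.
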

\begin{proof}
Indeed, in the cohomology ring of the generalized quasitoric manifold $X_{\Sigma,\Lambda}$ we have: $[D_{i_1}]\cdots[D_{i_n}]=(-1)^v[X_{\Sigma,\Lambda}]^\ast$, where we denote by $(-1)^v$ the sign of the fixed point $v=D_{i_1}\cap\cdots\cap D_{i_n}\in X_{\Sigma,\Lambda}$, which compares two orientations on $\mathcal T_{v}X_{\Sigma,\Lambda}$: the one induced by coorientations of characteristic submanifolds $D_i$ and the one induced by the representation of $T^n:=T^m/H$ in the tangent space $\mathcal T_{v}X_{\Sigma,\Lambda}\cong\C^n$. 

On the other hand, the weights of the tangential representation of the compact torus $T^n$ at the fixed point $v$ form a lattice basis dual to $(\Lambda(\rho_{i_1}),\ldots,\Lambda(\rho_{i_n}))$. Therefore, the sign $(-1)^{v}=\det(\Lambda(\rho_{i_1}),\ldots, \Lambda(\rho_{i_n}))=\s(I)$, which finishes the proof.
\end{proof}

To obtain the linear relations we need to analyze further the construction of generalized quasitoric manifolds. There are natural $(S^1)^m$-equivariant line bundles $L_1,\ldots, L_m$ on $\mathcal Z_K$. For each integer vector ${\bf k}=(k_1,\ldots, k_m) \in \Z^m$, the tensor product
\[
L_{{\bf k}}= L_1^{k_1}\otimes \ldots \otimes L_m^{k_m}
\]
descends to a complex line bundle $\widetilde L_{{\bf k}}$ on $X_{\Sigma, \Lambda}$. Moreover, if ${\bf k}\in \Z^m$ is such that the corresponding character acts trivially on $H_\Lambda\subset (S^1)^m$, the descendant bundle $\widetilde L_{{\bf k}}$ is topologically trivial. 

It is easy to see that there is a smooth section of $\widetilde L_{{\bf k}}$ with the degenerate locus given by $\sum_{i=1}^m k_i[D_i]$. By exactness of the sequence
\[
0\to M \xrightarrow{\Lambda^*} \Z^m \to M_{H_\Lambda} \to 0,
\]
the characters $\bf{k}$ acting trivially on $H_\Lambda$ are identified with the character lattice $M$ of $T$ with $k_i = \chi(v_i)$ for $\chi\in M$ and $v_i=\Lambda(\rho_i)$. Thus we obtain the following proposition.

\begin{proposition}\label{prop:lin}
For any character $\chi\in M$, the following linear relation in $H^2(X_{\Sigma,\Lambda})$ holds:
$$
 \sum_{i=1}^m \chi(v_i)[D_i] = 0,
$$
where $v_i:=\Lambda(\rho_i)$, for $1\leq i\leq m$.
\end{proposition}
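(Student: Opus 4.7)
My strategy is to make rigorous the line bundle argument sketched in the paragraph preceding the statement: each character $\chi\in M$ produces a complex line bundle on $X_{\Sigma,\Lambda}$ that is simultaneously topologically trivial and admits a smooth section whose Chern-theoretic zero divisor class is $\sum_i\chi(v_i)[D_i]$, so the two computations of $c_1$ must agree.

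First I would make the equivariant line bundles $L_i\to\mathcal{Z}_K$ precise: $L_i$ is the restriction to $\mathcal{Z}_K$ of the trivial bundle $\C^m\times\C\to\C^m$ carrying the $(S^1)^m$-action by the $i$-th coordinate character, and the restriction of the coordinate function $z_i$ is a canonical equivariant section whose zero locus descends to the characteristic submanifold $D_i\subset X_{\Sigma,\Lambda}$. Given $\chi\in M$, set $k_i:=\chi(v_i)$ and consider $L_{\mathbf{k}}:=L_1^{\otimes k_1}\otimes\cdots\otimes L_m^{\otimes k_m}$, on which $(S^1)^m$ acts via the character $\mathbf{k}=\Lambda^*(\chi)$. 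By exactness of $0\to M\xrightarrow{\Lambda^*}\Z^m\to M_{H_\Lambda}\to 0$ this character kills $H_\Lambda$, so $L_{\mathbf{k}}$ descends to a complex line bundle $\widetilde L_{\mathbf{k}}$ on the quotient $X_{\Sigma,\Lambda}$.

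The core step is a double computation of $c_1(\widetilde L_{\mathbf{k}})$. On the one hand, the constant section of $L_{\mathbf{k}}$ is $H_\Lambda$-invariant (since $\mathbf{k}$ is trivial on $H_\Lambda$) and nowhere vanishing, so it descends to a global trivialization of $\widetilde L_{\mathbf{k}}$; hence $c_1(\widetilde L_{\mathbf{k}})=0$. On the other hand, I would produce a smooth equivariant section of $L_{\mathbf{k}}$ whose zero/pole divisor is $\sum_{i=1}^m k_i[D_i]$ and descend it to $\widetilde L_{\mathbf{k}}$; since $H^*(X_{\Sigma,\Lambda})$ is torsion-free and concentrated in even degrees by Theorem~\ref{thm:celldecom}, the first Chern class is the Poincar\'e dual of this degeneracy class, yielding $\sum_i\chi(v_i)[D_i]=0$ in $H^2(X_{\Sigma,\Lambda})$.

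The main technical issue is constructing such a section when some $k_i$ are negative, where $z_i^{k_i}$ is not a holomorphic section. To handle this I would choose a Hermitian metric on each $L_i$, identify $L_i^{\vee}\cong\overline{L_i}$, and take the smooth equivariant section $\prod_{k_i>0}z_i^{k_i}\cdot\prod_{k_j<0}\overline{z_j}^{\,|k_j|}$ of $L_{\mathbf{k}}$. This section vanishes transversally along $D_i$ for $k_i>0$ (contributing $+k_i[D_i]$ with the complex orientation) and along $D_j$ for $k_j<0$ (contributing $-|k_j|[D_j]=k_j[D_j]$ with the conjugate orientation), so its total degeneracy class is exactly $\sum_i k_i[D_i]=\sum_i\chi(v_i)[D_i]$. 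Combined with the triviality of $\widetilde L_{\mathbf{k}}$ this gives the required linear relation.
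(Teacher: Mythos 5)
Your proposal is correct and takes essentially the same approach as the paper: show that $L_{\mathbf{k}}$ with $\mathbf{k}=\Lambda^*(\chi)$ descends to a topologically trivial line bundle $\widetilde L_{\mathbf{k}}$ on $X_{\Sigma,\Lambda}$ (since $\mathbf{k}$ kills $H_\Lambda$, so the constant section descends), and then compute $c_1(\widetilde L_{\mathbf{k}})$ a second time as the degeneracy class $\sum_i \chi(v_i)[D_i]$ of the section built from coordinate functions. You supply useful details the paper leaves implicit, notably the careful handling of negative exponents via conjugate sections and the equivariant Hermitian metric; one could shortcut that step slightly by invoking additivity of $c_1$ under tensor product together with the obvious fact that $z_i$ cuts out $D_i$, but your argument is a valid way to exhibit the claimed degeneracy class directly.
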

\begin{proof}
Indeed, the descendant complex line bundle $\widetilde L_{\chi(v_1),\ldots,\chi(v_m)}$ is trivial and hence its first Chern class is equal to zero:
\[
c_1(\widetilde L_{\chi(v_1),\ldots,\chi(v_m)}) =  \sum_{i=1}^m \chi(v_i)[D_i] = 0. \qedhere
\]
\end{proof}

\subsection{Topological version of the BKK Theorem} 
Let us start with an important observation: to describe a cohomology ring of a generalized quasitoric manifold it is enough to compute the {\emph{self-intersection polynomial}}:
\[
h_1[D_1] + \ldots + h_m[D_m] \mapsto \left\langle (h_1[D_1] + \ldots + h_m[D_m])^m, [X_{\Sigma,\Lambda}] \right\rangle 
\]
on the space of all linear combinations of classes of codimemsion~$2$ characteristic submanifolds. This is the subject of the following theorem.

\begin{theorem}\label{thm:BKK}
Let $X_{\Sigma,\Lambda}$ be a generalized quasitoric manifold with codimension~$2$ characteristic submanifolds $D_1,\ldots, D_m$. Then the following identity holds
$$
\left\langle (h_1[D_1] + \ldots + h_m[D_m])^m, [X_{\Sigma,\Lambda}] \right\rangle  = n!Vol(f_{h}),
$$
where $f_h \in \Pm_{\Sigma,\Lambda}$ is a generalized virtual polytope associated to the simplicial complex $K_\Sigma$, characteristic map $\Lambda$ and the set of parameters $h=(h_1,\ldots,h_m)$.
\end{theorem}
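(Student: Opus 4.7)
The plan is to show that $P(h) := \langle(h_1[D_1]+\cdots+h_m[D_m])^n, [X_{\Sigma,\Lambda}]\rangle$ and $n!\Vol(f_h)$ coincide by comparing their partial derivatives of order $n$; both are homogeneous polynomials of degree $n$ in $h=(h_1,\ldots,h_m)$, so such a polynomial is determined by its top-order partials. For distinct indices $i_1,\ldots,i_n$, the multinomial theorem combined with Proposition~\ref{prop:SR} gives $\partial_{i_1}\cdots\partial_{i_n}P(h) = n!\langle[D_{i_1}]\cdots[D_{i_n}],[X_{\Sigma,\Lambda}]\rangle$, which equals $n!\s(I)$ if $I=\{i_1,\ldots,i_n\}$ is a maximal cone of $\Sigma$ and $0$ otherwise. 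Corollary~\ref{volpolynomialderivativescoro} gives the same value on the right-hand side: $n!\s(I)\cdot|\det(\Lambda(\rho_{i_1}),\ldots,\Lambda(\rho_{i_n}))| = n!\s(I)$, since the characteristic map sends the generators of any maximal cone to a $\Z$-basis of the lattice $N$.

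Next I would treat multi-indices $\alpha$ with $|\alpha|=n$ and non-simplex support: on the right-hand side the partial $\partial^\alpha$ vanishes by the first part of Corollary~\ref{volpolynomialderivativescoro}, while on the left-hand side the characteristic submanifolds $D_{i_1},\ldots,D_{i_r}$ have empty intersection whenever the rays $\rho_{i_j}$ do not span a cone of $\Sigma$ (this is built into the construction of $X_{\Sigma,\Lambda}$ via the stratification of $\mathcal Z_K$), so $[D_{i_1}]\cdots[D_{i_r}]=0$ and every monomial $[D_{i_1}]^{k_1}\cdots[D_{i_r}]^{k_r}$ with positive exponents vanishes as well. For the remaining partials -- those whose support is a proper simplex of $K_\Sigma$ with some exponent exceeding~$1$ -- I would exploit the translation invariance of both sides under the shifts $h_i\mapsto h_i+\chi(\Lambda(\rho_i))$ for $\chi\in M$: on the LHS this is Proposition~\ref{prop:lin}, and on the RHS such a shift rigidly translates the arrangement $\Am(h)$ by $-\chi\in\R^n$, preserving $\Vol(f_h)$.

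The reduction step itself would go as follows. Given $\alpha$ with support $I$ a simplex and some index $i\in I$ with $\alpha_i\geq 2$, choose $\chi\in M$ vanishing on $\Lambda(\rho_j)$ for every $j\in I\setminus\{i\}$ with $\chi(\Lambda(\rho_i))\neq 0$; such $\chi$ exists because $\{\Lambda(\rho_j)\}_{j\in I}$ is part of a lattice basis. Differentiating the invariance at $h=0$ yields $\partial_i\Phi = -\chi(\Lambda(\rho_i))^{-1}\sum_{j\notin I}\chi(\Lambda(\rho_j))\partial_j\Phi$, so $\partial^\alpha\Phi$ is a linear combination of partials $\partial^{\alpha'}\Phi$ whose supports are $I\cup\{j\}$ with $j\notin I$ and whose excess at $i$ is strictly smaller. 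Iterating, either the enlarged support $I\cup\{j\}$ fails to be a simplex (the partial vanishes by the non-simplex step) or we eventually arrive at distinct-index partials with simplex support, which vanish by the first step. The main obstacle is carrying out this induction carefully and uniformly across all multi-indices and tracking signs; the strategic choice of the separating character $\chi$ makes the bookkeeping concrete, and conceptually this is the Khovanskii--Pukhlikov principle that a Poincar\'e duality algebra is determined by its top-degree form, applied here to directly identify $\Phi:=P-n!\Vol(f_h)$ with zero.
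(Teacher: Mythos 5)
Your proposal is correct and follows essentially the same route as the paper: compare order-$n$ partial derivatives, establish the square-free base case from Proposition~\ref{prop:SR} and Corollary~\ref{volpolynomialderivativescoro}, kill non-simplex supports on both sides, and reduce the remaining monomials via the linear relations of Proposition~\ref{prop:lin} together with translation invariance of the volume. The only stylistic difference is that the paper organizes the reduction as a clean induction on the \emph{multiplicity} $\sum(k_i-1)$ of the differential monomial, whereas you track the excess at a single index and argue iteratively, and you work with the difference $\Phi = P - n!\Vol$ rather than with $S$ and $\Vol$ separately.
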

\begin{proof}
Let us identify the space of all linear combinations $h_1[D_1] + \ldots + h_m[D_m]$ with the space of generalized virtual polytopes $\Pm_{\Sigma,\Lambda}$. Under this identification, both self-intersection and volume functions are homogeneous polynomials of degree $n$ on $\Pm_{\Sigma,\Lambda}$. Let us denote them by $S\colon\Pm_{\Sigma,\Lambda}\to \R$ and $Vol\colon\Pm_{\Sigma,\Lambda}\to \R$, respectively.

To show the equality $S(h)=n! Vol(h)$ holds, it is enough to prove the equality of all (mixed) partial derivatives of $S$ and $Vol$ of degree $n$: 
\[
\partial_{i_1}^{k_1}\ldots \partial_{i_s}^{k_s}S(h) = n! \cdot \partial_{i_1}^{k_1}\ldots \partial_{i_s}^{k_s} Vol(h),
\]
where $\partial_{i_j} = \partial/\partial h_{i_j}$ and $\sum_{j=1}^s k_{i_j}=n$.

Let us call the number $\sum_{i=1}^s (k_i-1)$ the \emph{multiplicity of the monomial} $\partial_{i_1}^{k_1}\dots\partial_{i_s}^{k_s}$.
In particular, a monomial has multiplicity $0$ if and only if it is square free. We will prove the equality of mixed partial derivatives by induction on the multiplicity of a differential monomial. 

For square free monomials, the equality follows from the first part of Corollary~\ref{volpolynomialderivativescoro} and Proposition~\ref{prop:SR}. Indeed, by Corollary~\ref{volpolynomialderivativescoro} in the case when $r = n$ and the vertices $v_{i_1}, \ldots, v_{i_n}$ form a simplex in $\Delta$ dual to the vertex $A\in \Delta^{\perp}$, we have: 
  \[
    \partial_{i_1}\ldots\partial_{i_n} \Vol(h) = \begin{cases}
    \s(i_1,\ldots, i_n), \quad \text{ if } \rho_{i_1}, \ldots, \rho_{i_n} \text{ span a cone in } \Sigma;\\
    0, \quad \quad \quad \quad \quad \quad \quad \text{otherwise.}
    \end{cases}
  \]
On the other hand $\partial_{i_1}\ldots\partial_{i_n} S(h)$ is equal to the coefficient in front of $t_{i_1}\ldots,t_{i_n}$ in the polynomial $S(h+(t_1\ldots,t_m))$. We get:
\[
S(h+(t_1\ldots,t_n)) = \left\langle ((h_1+t_1)[D_1] + \ldots + (h_m+t_m)[D_m])^m, [X_{\Sigma,\Lambda}] \right\rangle = t_{i_1}\ldots t_{i_n} \cdot n!\cdot \left\langle D_{i_1}\ldots D_{i_n}, [X_{\Sigma,\Lambda}] \right\rangle+\ldots
\]
Hence by Proposition~\ref{prop:SR} we get:
\[
\partial_{i_1}\ldots\partial_{i_n} S(h) = \begin{cases}
    n!\cdot\s(i_1,\ldots, i_n), \quad \text{ if } \rho_{i_1}, \ldots, \rho_{i_n} \text{ span a cone in } \Sigma;\\
    0, \quad \quad \quad \quad \quad \quad \quad \quad \,\,\,\text{otherwise.}
    \end{cases}
\]

Now, let us assume that the equality of mixed partial derivatives holds for all differential monomials of multiplicity $r-1$. Let $\partial_{i_1}^{k_1}\ldots \partial_{i_s}^{k_s}$ be a differential monomial of multiplicity $r$ with $k_1\geq 1$. We can assume that $\rho_{i_1}, \ldots, \rho_{i_s}$ span a cone in $\Sigma$, since otherwise
\[
\partial_{i_1}^{k_1}\ldots \partial_{i_s}^{k_s}S(h) = n! \cdot \partial_{i_1}^{k_1}\ldots \partial_{i_s}^{k_s} Vol(h) = 0.
\]
In that case, there exists a character $\chi\in M$ such that 
\[
\langle\chi, \Lambda(\rho_{i_1})\rangle =1,\,\langle\chi, \Lambda(\rho_{i_2})\rangle =0,\, \ldots,\, \langle\chi, \Lambda(\rho_{i_s})\rangle =0.
\]
Therefore, since the volume is invariant under the translation of a generalized virtual polytope, we get
\[
\partial_{i_1}^{k_1}\ldots \partial_{i_s}^{k_s} Vol(h) = -\sum_{l\ne i_j}\langle\chi, \Lambda(\rho_{l})\rangle\partial_l\partial_{i_1}^{k_1-1}\ldots \partial_{i_s}^{k_s} Vol(h) 
\]
and similarly by Proposition~\ref{prop:lin}:
\[
\partial_{i_1}^{k_1}\ldots \partial_{i_s}^{k_s} S(h) = -\sum_{l\ne i_j}\langle\chi, \Lambda(\rho_{l})\rangle\partial_l\partial_{i_1}^{k_1-1}\ldots \partial_{i_s}^{k_s} S(h). 
\]
Moreover, the differential monomials on the right hand side of the expressions above have multiplicities less than $r$, so the equality 
\[
\partial_{i_1}^{k_1}\ldots \partial_{i_s}^{k_s}S(h) = n! \cdot \partial_{i_1}^{k_1}\ldots \partial_{i_s}^{k_s} Vol(h)
\]
follows from the induction hypothesis.
\end{proof}

We will finish this subsection by providing a different interpretation of Theorem~\ref{thm:BKK}. Let us first recall the classical interpretation of the BKK Theorem for toric varieties. The {\emph{Newton polyhedron}} $\Delta(f)\subset \R^n$ of a Laurent polynomial $f=\sum a_i x^{k_i}$ is the convex hull of the vectors $k_i$ with $a_i\ne 0$. For a fixed polytope $\Delta$, let $E_\Delta$ be a finite-dimensional vector space of Laurent polynomials $f$ such that $\Delta(f)\subset\Delta$. 

%The BKK Theorem computes the number of solutions in $(\C^*)^n$ of a system $f_1 = \ldots = f_n = 0$ of generic Laurent polynomials %with fixed Newton polytopes $\Delta_1,\ldots, \Delta_n$. 

\begin{theorem}[BKK Theorem]
Let $f_1, \ldots, f_n$ be generic Laurent polynomials with $f_i\in E_{\Delta_i}$, for $1\leq i\leq n$. Then all the solutions of the system $f_1 = \ldots = f_n = 0$ in $(\C^*)^n$ are non-degenerate and the number of solutions is equal to
$$
n! Vol(\Delta_1,\ldots,\Delta_n),
$$
where $Vol$ is the mixed volume of virtual polytopes function.
\end{theorem}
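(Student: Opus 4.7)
The plan is to reduce the BKK Theorem to an intersection number on a compact toric variety and then identify this intersection number with the mixed volume via the volume polynomial, mirroring the strategy used for generalized quasitoric manifolds in Theorem~\ref{thm:BKK}.

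First, I would choose a smooth projective fan $\Sigma$ in $\R^n$ that simultaneously refines the normal fans of $\Delta_1,\ldots,\Delta_n$; such a $\Sigma$ exists by taking a common refinement and resolving. The associated toric variety $X_\Sigma$ is smooth and projective, contains $(\C^*)^n$ as its open dense orbit, and comes equipped with nef line bundles $L_1,\ldots,L_n$, where each $L_i$ corresponds to the polytope $\Delta_i$ and the space $E_{\Delta_i}$ embeds into $H^0(X_\Sigma,L_i)$ with each Laurent monomial $x^k$ for $k\in\Delta_i\cap\Z^n$ representing a torus-equivariant global section.

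Next, I would apply a stratified Bertini argument to show that for generic $f_i\in E_{\Delta_i}$ the divisors $\{f_i=0\}\subset X_\Sigma$ are smooth, mutually transverse, and meet only inside $(\C^*)^n$. For each torus-invariant divisor $D_\rho$ with $\rho\in\Sigma(1)$, the restriction of $L_i$ to $D_\rho$ is described by the face of $\Delta_i$ on which the linear functional $\rho$ attains its minimum; genericity of the coefficients of $f_i$ forces the corresponding face-polynomial to be non-vanishing on the open orbit of $D_\rho$. Iterating along the stratification of $X_\Sigma$ by torus orbits, one concludes that all common zeros of $f_1,\ldots,f_n$ lie in $(\C^*)^n$, are non-degenerate, and their number equals the intersection number $\int_{X_\Sigma} c_1(L_1)\cup\cdots\cup c_1(L_n)$.

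To finish, I would invoke the classical toric analogue of Theorem~\ref{thm:BKK}: for the smooth projective toric variety $X_\Sigma$, the self-intersection polynomial on $H^2(X_\Sigma)$ coincides with $n!$ times the volume polynomial on the corresponding space of virtual polytopes, with each $c_1(L_i)$ represented by the polytope $\Delta_i$. Polarizing this identity of homogeneous polynomials of degree $n$, that is, extracting the coefficient of $h_1\cdots h_n$ in $\Vol(h_1\Delta_1+\cdots+h_n\Delta_n)$, produces on the one side the intersection number $\int_{X_\Sigma}c_1(L_1)\cup\cdots\cup c_1(L_n)$ and on the other side $n!\,\Vol(\Delta_1,\ldots,\Delta_n)$ by the defining polarization of mixed volume. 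The main obstacle is the stratified Bertini step: one has to check, orbit by orbit, that generic sections in $E_{\Delta_i}$ simultaneously rule out common zeros on every torus-invariant stratum while securing transversality of the intersection inside $(\C^*)^n$, which requires the explicit identification of $L_i\vert_{D_\rho}$ with the face-polynomial data of $\Delta_i$.
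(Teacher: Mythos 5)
The paper does not prove this statement. The classical BKK Theorem appears in Section 6 only as background recalled from the literature, without argument; it is placed there merely to motivate the analogous reformulation of Theorem~\ref{thm:BKK} for generalized quasitoric manifolds. So there is no proof in the paper against which your approach can be compared.

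That said, your outline is a correct sketch of the standard toric--geometric proof of the Bernstein--Kushnirenko theorem. Two points deserve a closer look. First, the stratified Bertini step is the genuine technical core, and you rightly flag it as such. The assertion that for generic $f_i\in E_{\Delta_i}$ the common zero locus avoids the toric boundary of $X_\Sigma$ requires more than Bertini on the ambient variety: for each torus orbit closure $V(\sigma)$ one must argue that the restricted linear systems (encoded by the faces $\Delta_i^\sigma$ of $\Delta_i$ on which the defining linear form of $\sigma$ is minimized) cannot have a common zero in the dense orbit of $V(\sigma)$, because there are $n$ such restricted systems on a torus of dimension $<n$. This orbit-by-orbit dimension count, not transversality per se, is what forces the zeros into $(\C^*)^n$, and it deserves to be made explicit. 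Second, the final polarization step has an implicit factor of $n!$ that must be tracked. If one writes the Pukhlikov--Khovanskii identity as $\langle(h_1 c_1(L_1)+\dots+h_n c_1(L_n))^n, [X_\Sigma]\rangle = n!\,\Vol(h_1\Delta_1+\dots+h_n\Delta_n)$ and extracts the coefficient of $h_1\cdots h_n$ on both sides, the left-hand side yields $n!\int_{X_\Sigma} c_1(L_1)\cup\dots\cup c_1(L_n)$, while the right-hand side yields $n!\cdot n!\,\Vol(\Delta_1,\dots,\Delta_n)$; dividing through recovers the claim. Your phrasing suggests that extracting coefficients directly produces the intersection number and $n!\,\Vol(\Delta_1,\dots,\Delta_n)$, which is off by a factor of $n!$ on one side; the conclusion is unchanged, but the bookkeeping should be tightened.
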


One can reformulate Theorem~\ref{thm:BKK} in a similar way. Let $\Delta_1,\ldots, \Delta_n$ be generalized virtual polytopes in $\Pm_{\Sigma,\Lambda}$ associated to a generalized quasitoric manifold $X_{\Sigma,\Lambda}$. Let $L_{\Delta_i}$ be a line bundle associated to the generalized virtual polytope $\Delta_i$ and let $E_{\Delta_i}=\Gamma(X_{\Sigma,\Lambda},L_{\Delta_i})$ be the space of smooth sections of $L_{\Delta_i}$. Then Theorem~\ref{thm:BKK} can be reformulated in the following way.

\begin{theorem}
Let $s_1, \ldots, s_n$ be generic Laurent polynomials with $s_i\in E_{\Delta_i}$, for $1\leq i\leq n$. Then all the solutions of the system $s_1 = \ldots = s_n = 0$ in $X_{\Sigma,\Lambda}$ are non-degenerate and the number of solutions counted with signs is equal to
$$
n! Vol(\Delta_1,\ldots,\Delta_n),
$$
where $Vol$ is the mixed volume of generalized virtual polytopes function.
\end{theorem}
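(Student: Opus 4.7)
The plan is to deduce this mixed statement from the self-intersection version in Theorem~\ref{thm:BKK} by combining a transversality argument with the standard polarization identity.

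First, I would identify the cohomological meaning of a zero locus of a generic section. By construction, the line bundle $L_{\Delta_i}$ on $X_{\Sigma,\Lambda}$ descends from the equivariant tensor product $L_1^{h_1^{(i)}}\otimes\cdots\otimes L_m^{h_m^{(i)}}$ on $\Zm_K$, so its first Chern class agrees, under the correspondence $\Delta(h)\mapsto \sum_j h_j[D_j]$ described before Theorem~\ref{thm:BKK}, with the class $\alpha_i := h_1^{(i)}[D_1]+\cdots+h_m^{(i)}[D_m]\in H^2(X_{\Sigma,\Lambda})$. A generic smooth section $s_i\in E_{\Delta_i}=\Gamma(X_{\Sigma,\Lambda},L_{\Delta_i})$ vanishes transversally along a smooth oriented codimension-$2$ submanifold $Z(s_i)$ whose Poincar\'e dual is $\alpha_i$; this uses only the standard Sard/Thom transversality for smooth sections of a smooth complex line bundle on a compact manifold, and does not require $X_{\Sigma,\Lambda}$ to be algebraic.

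Second, I would promote transversality to the simultaneous intersection. Choosing $s_1,\ldots,s_n$ generically, an iterative application of Thom transversality makes $Z(s_1),\ldots,Z(s_n)$ intersect pairwise transversally, so that $Z(s_1)\cap\cdots\cap Z(s_n)$ is a finite set of transverse (hence non-degenerate) points in $X_{\Sigma,\Lambda}$. By the basic oriented intersection pairing on a compact oriented manifold, the signed number of these points equals
\[
N(s_1,\ldots,s_n) \;=\; \langle \alpha_1\smile\cdots\smile \alpha_n,\,[X_{\Sigma,\Lambda}]\rangle,
\]
with the sign convention induced by the chosen omniorientation.

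Third, I would extract this mixed intersection number from Theorem~\ref{thm:BKK} by polarization. Consider the polynomial
\[
P(t_1,\ldots,t_n) \;:=\; \bigl\langle (t_1\alpha_1+\cdots+t_n\alpha_n)^n,\,[X_{\Sigma,\Lambda}]\bigr\rangle.
\]
On the one hand, expanding multinomially, the coefficient of the monomial $t_1\cdots t_n$ in $P$ equals $n!\cdot\langle\alpha_1\cdots\alpha_n,[X_{\Sigma,\Lambda}]\rangle$. On the other hand, linearity of the construction $h\mapsto f_h$ (Corollary~\ref{cor:linear}) yields $f_{t_1 h^{(1)}+\cdots+t_n h^{(n)}}=t_1 f_{h^{(1)}}+\cdots+t_n f_{h^{(n)}}$, and Theorem~\ref{thm:BKK} applied to the parameter $t_1 h^{(1)}+\cdots+t_n h^{(n)}$ gives
\[
P(t_1,\ldots,t_n) \;=\; n!\cdot \Vol\!\bigl(t_1 f_{h^{(1)}}+\cdots+t_n f_{h^{(n)}}\bigr).
\]
The coefficient of $t_1\cdots t_n$ on the right is, by the usual definition of mixed volume, equal to $(n!)^2\cdot \Vol(\Delta_1,\ldots,\Delta_n)$. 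Equating and dividing by $n!$ yields $\langle\alpha_1\cdots\alpha_n,[X_{\Sigma,\Lambda}]\rangle = n!\,\Vol(\Delta_1,\ldots,\Delta_n)$, which combined with Step~2 gives the claim.

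The main obstacle I expect is not algebraic but bookkeeping: one must verify that the sign convention appearing in the signed count of intersection points matches the sign convention built into Theorem~\ref{thm:BKK} and the definition of $\Vol$ on the space of generalized virtual polytopes. The omniorientation of $X_{\Sigma,\Lambda}$ fixes orientations of the $D_i$, hence of $Z(s_i)$ (as $s_i$ deforms to a section supported near $D_i$), and Proposition~\ref{prop:SR} records the signs $\s(I)$ which are precisely the signs entering Lemma~\ref{Ider} and hence $\Vol(f_h)$; this is what makes the polarization identity compatible with signs. Once this sign compatibility is spelled out, the argument reduces to the multinomial expansion above.
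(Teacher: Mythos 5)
Your argument is correct and is essentially the paper's intended one: the paper gives no separate proof, presenting this statement as a direct reformulation of Theorem~\ref{thm:BKK}, and your steps (transverse zero loci Poincar\'e dual to $c_1(L_{\Delta_i})=\sum_j h_j^{(i)}[D_j]$, signed count equal to $\langle\alpha_1\cdots\alpha_n,[X_{\Sigma,\Lambda}]\rangle$, then polarization of the self-intersection polynomial via Corollary~\ref{cor:linear}) supply exactly the details that reformulation presupposes. The normalization bookkeeping you do ($n!$ versus $(n!)^2$) matches the mixed-volume convention used in the paper's statement of the classical BKK theorem, so no gap remains.
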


\begin{remark}
Note that in the algebraic case, the multiplicity of each non-degenerate root is equal to $1$; however, in the case of smooth sections $s_i\in \Gamma(X_{\Sigma,\Lambda},L_{\Delta_i})$, the multiplicity of a non-degenerate root might be equal to $-1$. Nevertheless, the number of solutions counted with signs can still be computed in terms of a mixed volume.
\end{remark}

%%%%%%%%%%%%%%%%%%%%%%%%%%%%%%%%%%%%%%%%%%%%%%%%%%%%%%%%%%%%%%%%%%%%%%%%%%%%%%%%%%%%%%%%%%%%%%%%%%%%%%%%%%%%%%%%%%%%%%%%%%

\subsection{Pukhlikov-Khovanskii type description} In this subsection we use the approach introduced by {Pukh\-li\-kov} and the first author for the computation of cohomology rings. The key ingredient of such a description is an exact computation of Macaulay inverse systems for graded algebras with Poincar\'e duality generated in degree~$1$.

We will call a graded, commutative algebra $A = \bigoplus_{i=0}^n A_i$ over a field $\K$ of characteristic $0$ a \emph{Poincar\'e duality algebra} if 
\begin{itemize}
    \item $A_0 \simeq A_n \simeq \K$;
    \item the bilinear map $A_i \times A_{n-i} \to A_n$ is non-degenerate for any $i = 0, \ldots, n$ (Poincar\'{e} duality).
\end{itemize}  
The main example of a Poincar\'e duality algebra arises as follows. Let $X$ be a smooth closed orientable manifold of dimension $2n$. Then the algebra of even-degree cohomology classes $A = \bigoplus_{i=0}^n H^{2i}(X)$ is a Poincar\'e duality algebra. In particular, since for a generalized quasitoric manifold $X_{\Sigma,\Lambda}$ one has $H^{2i+1}(X_{\Sigma,\Lambda})=0$ for all $i\geq 0$, its cohomology ring $H^*(X_{\Sigma,\Lambda})$ is also a Poincar\'e duality algebra. The next theorem yields a description of Poincar\'e duality algebras. 

\begin{theorem}\label{Pkh}
 Let $A$ be a Poincar\'e duality algebra generated (as an algebra) by the elements from $A_1=\K\langle v_1,\ldots, v_r\rangle$ (i.e. by elements of degree one). Then 
  \[
    A\simeq \K[t_1, \ldots, t_r]/\{p(t_1, \ldots, t_r) \in \K[t_1, \ldots, t_r] \colon p(\tfrac{\partial}{\partial x_1}, \ldots, \tfrac{\partial}{\partial x_r}) f(x_1, \ldots, x_r) = 0 \},
  \]
  where we identify $A_1$ with $\K^r$ via a basis $v_1, \ldots, v_r$ and $f\colon A_1 \simeq \K^r \to \K$ is a polynomial given by the formula: 
  \[
    f(x_1,\ldots, x_r) = (x_1v_1 + \ldots + x_rv_r)^n \in A_n \simeq k.
  \]
\end{theorem}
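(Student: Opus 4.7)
The plan is to define the evaluation homomorphism $\varphi\colon \K[t_1,\ldots,t_r]\to A$ by $\varphi(t_i)=v_i$, which is surjective since $A$ is generated in degree one by $v_1,\ldots,v_r$. The task reduces to identifying its kernel $J$ with the ``Macaulay'' ideal
$$I = \bigl\{\, p \in \K[t_1,\ldots,t_r] : p\bigl(\tfrac{\partial}{\partial x_1},\ldots,\tfrac{\partial}{\partial x_r}\bigr) f = 0 \,\bigr\}.$$
Since $f(x)=(x_1v_1+\ldots+x_rv_r)^n$ is homogeneous of degree $n$ in the $x_i$, and distinct homogeneous components of $p$ produce $\partial$-images of distinct total degrees in $x$, both $I$ and $J$ are graded ideals; so it suffices to show $I_d=J_d$ in each degree $d$. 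The case $d>n$ is trivial ($A_d=0$ and $\partial^\alpha f=0$ for $|\alpha|>n$), so one may assume $0\le d\le n$.

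For a homogeneous $p$ of degree $d\le n$, I would carry out the derivative computation directly. Setting $L(x)=x_1v_1+\ldots+x_rv_r$ (valued in $A_1\otimes\K[x]$), one has $\partial^\alpha L^n = \tfrac{n!}{(n-d)!}\, v^\alpha\, L^{n-d}$ for every multi-index $\alpha$ with $|\alpha|=d$. Summing over the monomials of $p$ yields
$$p\bigl(\tfrac{\partial}{\partial x_1},\ldots,\tfrac{\partial}{\partial x_r}\bigr) f(x) \;=\; \frac{n!}{(n-d)!}\; p(v_1,\ldots,v_r)\cdot L(x)^{n-d},$$
interpreted as a polynomial in $x$ with coefficients in $A_n\simeq\K$ via the fixed isomorphism.

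The final step is the key one, and it is where Poincar\'e duality enters. The right-hand side above vanishes identically in $\K[x]$ if and only if $p(v_1,\ldots,v_r)\cdot v^\beta = 0$ in $A_n$ for every $\beta$ with $|\beta|=n-d$. Since $A$ is generated in degree one by the $v_i$, the monomials $v^\beta$ with $|\beta|=n-d$ span $A_{n-d}$, so the condition is equivalent to $p(v_1,\ldots,v_r)\cdot A_{n-d}=0$. By nondegeneracy of the pairing $A_d\times A_{n-d}\to A_n$, this forces $p(v_1,\ldots,v_r)=0$, i.e.~$p\in J_d$; the converse is immediate from the same computation. Hence $I=J$, and $\varphi$ descends to the desired isomorphism $\K[t_1,\ldots,t_r]/I\xrightarrow{\ \sim\ } A$.

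I do not expect a serious obstacle: the argument is the standard Macaulay/apolarity correspondence for Artinian Gorenstein algebras, and all three ingredients --- the derivative identity for $L^n$, the spanning of $A_{n-d}$ by degree-$(n-d)$ monomials in the generators, and the Poincar\'e pairing --- are built into the hypotheses. The only bookkeeping that requires care is keeping track of the two different gradings (the grading of $A$ and the polynomial grading in the $x_i$'s) and being consistent about the identification $A_n\simeq\K$ used to view $f$ as a scalar-valued polynomial.
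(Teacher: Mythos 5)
Your argument is correct. Note, however, that the paper itself gives no proof of Theorem~\ref{Pkh}: it is quoted from the earlier works \cite{PK92,KhoM}, where exactly this kind of Macaulay inverse-system argument is carried out. Your proof is the standard one and is complete: the identity $\partial^\alpha L^n=\tfrac{n!}{(n-d)!}\,v^\alpha L^{n-d}$ (valid since the $v_i$ commute and $\operatorname{char}\K=0$, so the factor $\tfrac{n!}{(n-d)!}$ is invertible), the observation that $A_{n-d}$ is spanned by the monomials $v^\beta$ with $|\beta|=n-d$ because $A$ is generated in degree one, and the nondegeneracy of the pairing $A_d\times A_{n-d}\to A_n$ together give $\ker\varphi=\Ann(f)$ degree by degree, and the graded-ideal reduction and the trivial range $d>n$ are handled correctly. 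The only points worth making explicit in a written version are that the annihilator set is indeed an ideal (since $(qp)(\partial)f=q(\partial)\,p(\partial)f$) and that the binomial coefficients appearing in the coefficients of $L^{n-d}$ are nonzero in characteristic~$0$, both of which are immediate.
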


Theorem~\ref{Pkh} was used in \cite{PK92} to give a description of the cohomology ring of a smooth projective toric variety. Later, it was used in \cite{KavehVolume} to provide a description of cohomology rings of full flag varieties $G/B$. A more general version of Theorem~\ref{Pkh} has been obtained recently in~\cite{KhoM} and used in~\cite{hof2020,KLM21} to give a description of cohomology rings of toric and quasitoric bundles.

Theorem~\ref{Pkh} accepts a coordinate free reformulation. Indeed, the ring $\K[t_1, \ldots, t_r]$ in Theorem~\ref{Pkh} can be identified with the ring of differential operators with constant coefficients $\Diff(A_1)$ on $A_1$. Hence the description of the algebra $A$ becomes 
\[
A\simeq \Diff(A_1)/\Ann(f),
\]
where $\Ann(f) = \{D\in \Diff(A_1) \,|\, D\cdot f=0$ is the annihilator ideal of $f$.

\begin{theorem}\label{PKHquasitoric:thm}
Let $X_{\Sigma,\Lambda}$ be a generalized quasitoric manifold and let $\Pm_{\Sigma,\Lambda}$ be the space of generalized virtual polytopes associated to it. Then the cohomology ring $H^*(X_{\Sigma,\Lambda})$ can be computed as
\[
H^*(X_{\Sigma,\Lambda}) = \Diff(\Pm_{\Sigma,\Lambda})/ \Ann(\Vol),
\]
where $\Diff(\Pm_{\Sigma,\Lambda})$ is the ring of differential operators with constant coefficients on $\Pm_{\Sigma,\Lambda}$ and $\Ann(\Vol)$ is the annihilator ideal of the volume polynomial.
\end{theorem}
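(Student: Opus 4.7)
The plan is to assemble the three ingredients already developed in the paper and then invoke the Pukhlikov--Khovanskii duality theorem (Theorem~\ref{Pkh}) applied to the cohomology algebra $H^*(X_{\Sigma,\Lambda})$ with distinguished polynomial given by the self-intersection form on $H^2$.

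First, I would check that the hypotheses of Theorem~\ref{Pkh} hold for $A := H^*(X_{\Sigma,\Lambda};\Q)$. The cell decomposition of Theorem~\ref{thm:celldecom} has only even-dimensional cells, so $H^*(X_{\Sigma,\Lambda})$ vanishes in odd degrees, is concentrated in even degrees, and is torsion-free; together with the standard Poincar\'e duality for the closed oriented smooth manifold $X_{\Sigma,\Lambda}$ this shows $A$ is a graded Poincar\'e duality algebra with $A_0\simeq A_n\simeq \Q$ (after regrading by $2i\mapsto i$). Next, I would argue that $A$ is generated in degree~$1$ by the classes $[D_1],\ldots,[D_m]$: the closures of the even-dimensional cells $\widetilde U_{\tau_i}/H$ from Theorem~\ref{thm:celldecom} are transverse intersections of the characteristic submanifolds $D_i$ corresponding to the incoming rays of $\tau_i$, so their Poincar\'e duals are products of the $[D_i]$'s, and these cells give an additive basis of $A$.

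Second, I would identify $A_1 = H^2(X_{\Sigma,\Lambda};\Q)$ with the space $\Pm_{\Sigma,\Lambda}$ of generalized virtual polytopes via the map $\Delta(h)\mapsto h_1[D_1]+\cdots+h_m[D_m]$ introduced just before Theorem~\ref{thm:celldecom}. The linear relations of Proposition~\ref{prop:lin} among the $[D_i]$ already hold among the virtual polytopes (both are governed by $M\hookrightarrow\Z^m$ via $\Lambda^*$), so this map is surjective; injectivity and the equality $\dim A_1 = m-n$ will follow from the identification of the kernel with the image of $\Lambda^\ast\colon M\to\Z^m$, which is exactly the content of the short exact sequence used in the proof of Proposition~\ref{prop:lin}. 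Under this identification, the self-intersection polynomial $f(h)=\bigl\langle(h_1[D_1]+\cdots+h_m[D_m])^n,[X_{\Sigma,\Lambda}]\bigr\rangle$ on $A_1$ equals $n!\,\Vol(f_h)$ by the topological BKK Theorem~\ref{thm:BKK}.

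Third, I would apply Theorem~\ref{Pkh} (in its coordinate-free form) to conclude
\[
H^*(X_{\Sigma,\Lambda}) \;\simeq\; \Diff(A_1)/\Ann(f) \;=\; \Diff(\Pm_{\Sigma,\Lambda})/\Ann(n!\,\Vol) \;=\; \Diff(\Pm_{\Sigma,\Lambda})/\Ann(\Vol),
\]
where in the last equality I use that the annihilator ideal is invariant under scaling the polynomial by the nonzero constant $n!$.

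The main obstacle I expect is step two: verifying cleanly that $H^*(X_{\Sigma,\Lambda})$ is generated by the classes $[D_i]$ and that the map $\Pm_{\Sigma,\Lambda}\to H^2(X_{\Sigma,\Lambda})$ is an isomorphism, because this requires matching the cell decomposition of Theorem~\ref{thm:celldecom} with the intersection-theoretic interpretation of each cell as a monomial in the $[D_i]$'s (an index argument using incoming rays of maximal cones, analogous to the classical quasitoric case of~\cite{davis1991convex}). Once that is in place, the rest is a formal assembly: Poincar\'e duality of the cell complex, identification of the degree-$n$ form with the volume polynomial via Theorem~\ref{thm:BKK}, and a direct appeal to Theorem~\ref{Pkh}.
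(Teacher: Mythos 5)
Your overall strategy mirrors the paper's proof: use the cell decomposition of Theorem~\ref{thm:celldecom} to argue that $H^*(X_{\Sigma,\Lambda})$ is a Poincar\'e duality algebra generated in degree two by the $[D_i]$, apply the Pukhlikov--Khovanskii description of Theorem~\ref{Pkh}, and substitute $S(h)=n!\Vol(h)$ from Theorem~\ref{thm:BKK}. Your first step in fact spells out in more detail than the paper does why the closures of cells are transverse intersections of characteristic submanifolds, which is the right justification for generation in degree two.

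However, your second step contains a genuine confusion. You say you ``would identify $A_1=H^2(X_{\Sigma,\Lambda};\Q)$ with the space $\Pm_{\Sigma,\Lambda}$'' and claim both surjectivity and injectivity of the map $\Delta(h)\mapsto\sum h_i[D_i]$. These two claims are inconsistent: $\Pm_{\Sigma,\Lambda}\simeq\R^m$ whereas $H^2(X_{\Sigma,\Lambda};\R)\simeq\R^{m-n}$, so the map is a surjection with $n$-dimensional kernel $\Lambda^*(M_\R)$ --- precisely the linear relations of Proposition~\ref{prop:lin} --- and is not an isomorphism. Likewise, the linear relations among the $[D_i]$ do not ``already hold among the virtual polytopes'': $\Delta(h)$ and its translate $\Delta(h+\Lambda^*\chi)$ are distinct elements of $\Pm_{\Sigma,\Lambda}$. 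Because of this, your chain of equalities
$\Diff(A_1)/\Ann(f)=\Diff(\Pm_{\Sigma,\Lambda})/\Ann(n!\Vol)$ is not justified by ``identification.'' What does make it hold is the standard fact that for a linear surjection $\pi\colon W\twoheadrightarrow V$ and a polynomial $f$ on $V$, one has $\Diff(W)/\Ann(\pi^*f)\cong\Diff(V)/\Ann(f)$, since $\Ann(\pi^*f)$ is the preimage of $\Ann(f)$ under the induced surjection $\Diff(W)\to\Diff(V)$. You then need to check that $\Vol$ on $\Pm_{\Sigma,\Lambda}$ really is the pullback of the self-intersection form on $H^2$, i.e.\ that $\Vol$ is invariant under translation by $\Lambda^*(M_\R)$; this is exactly the translation-invariance of the volume of a generalized virtual polytope used in the proof of Theorem~\ref{thm:BKK}. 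Once you replace the false identification with this descent argument, the remainder of your proposal is correct and coincides with the paper's reasoning.
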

\begin{proof}
By Theorem~\ref{thm:celldecom}, the cohomology ring $H^*(X_{\Sigma,\Lambda})$ is generated by the classes of codimension-2 characteristic submanifolds in $X_{\Sigma,\Lambda}$. Hence there exists a surjection $\Diff(\Pm_{\Sigma,\Lambda})\to H^*(X_{\Sigma,\Lambda})$ with a kernel given, by Theorem~\ref{Pkh}, as the annihilator ideal of the self-intersection polynomial $S(h)$ of classes of codimension-2 characteristic submanifolds. However, by Theorem~\ref{thm:BKK}, $S(h)=n!\Vol(h)$ and hence:
\[
H^*(X_{\Sigma,\Lambda}) = \Diff(\Pm_{\Sigma,\Lambda})/ \Ann(S) = \Diff(\Pm_{\Sigma,\Lambda})/ \Ann(\Vol).
\]
\end{proof}

%\bibliographystyle{alpha}
%\bibliography{tbc}

%\Addresses

\end{document}